\def\aeMarkRightAngle[size=#1](#2,#3,#4){
   \draw ($(#3)!#1!(#2)$) -- 
         ($($(#3)!#1!(#2)$)!#1!90:(#2)$) --
         ($(#3)!#1!(#4)$);}
\newcommand\xrowht[2][0]{\addstackgap[.5\dimexpr#2\relax]{\vphantom{#1}}}
\numberwithin{equation}{section}
\newcommand{\leqnomode}{\tagsleft@true\let\veqno\@@leqno}
\newcommand{\reqnomode}{\tagsleft@false\let\veqno\@@eqno}
\DeclareMathOperator{\SL}{SL}
\DeclareMathOperator{\Hom}{Hom}
\DeclareMathOperator{\Sp}{Sp}
\DeclareMathOperator{\Spin}{Spin}
\DeclareMathOperator{\SO}{SO}
\DeclareMathOperator{\Aut}{Aut}
\DeclareMathOperator{\Ric}{Ric}
\DeclareMathOperator{\PSL}{PSL}
\DeclareMathOperator{\PSp}{PSp}
\newtheorem{theorem}{Theorem}[section]
\newtheorem{lemma}[theorem]{Lemma}
\newtheorem{proposition}[theorem]{Proposition}
\newtheorem{corollary}[theorem]{Corollary}
\theoremstyle{definition}
\newtheorem{example}[theorem]{Example}
\newtheorem{definition}[theorem]{Definition}
\newtheorem{remark}[theorem]{Remark}
\begin{document}
	
\title[K-stability of Gorenstein Fano group compactifications with rank two]{K-stability of Gorenstein Fano \\group compactifications with rank two}

\author{Jae-Hyouk Lee}
\address{Department of Mathematics, Ewha Womans University, 
Seodaemun-gu, Seoul 03760, Republic of Korea}
\email{jaehyoukl@ewha.ac.kr}

\author{Kyeong-Dong Park}
\address{Department of Mathematics and Research Institute of Natural Science, Gyeongsang National University, Jinju 52828, Republic of Korea}
\email{kdpark@gnu.ac.kr}

\author{Sungmin Yoo}
\address{Department of Mathematics, Incheon National University, Yeonsu-gu, Incheon 22012, Republic of Korea}
\email{sungminyoo.math@inu.ac.kr}

\subjclass[2010]{Primary: 14M27, 32Q20, Secondary: 32M12, 53C55} 

\keywords{singular K\"{a}hler--Einstein metrics, equivariant K-stability, Gorenstein Fano group compactifications, moment polytopes, greatest Ricci lower bounds, K\"{a}hler--Ricci flow}

\begin{abstract}
We give a classification of Gorenstein Fano bi-equivariant compactifications of semisimple complex Lie groups with rank two,
and determine which of them are equivariant K-stable and admit (singular) K\"{a}hler--Einstein metrics. 
As a consequence, we obtain several explicit examples of K-stable Fano varieties admitting (singular) K\"{a}hler--Einstein metrics.
We also compute the greatest Ricci lower bounds, equivalently the delta invariants for K-unstable varieties.
This gives us three new examples on which each solution of the K\"{a}hler--Ricci flow is of type II. 
\end{abstract}

\maketitle
\setcounter{tocdepth}{1} 
\date{\today}

\begin{center}
\end{center}

\section{Introduction}

The Yau--Tian--Donaldson conjecture for the Fano case states that a Fano manifold admits a K\"{a}hler--Einstein metric if and only if it satisfies the algebraic geometric condition, called the K-polystability.
This conjecture for the Fano case were completely solved by Chen--Donaldson--Sun \cite{cds1, cds2, cds3} and Tian \cite{tian15}.
Recently, Li~\cite{Li21} generalized this conjecture to $\mathbb{Q}$-Fano varieties with singular K\"{a}hler--Einstein metrics using the notion of uniform K-stability.
In general, however, it is difficult to verify the K-stability condition to show the existence of a K\"{a}hler--Einstein metric since one should consider infinite number of possible degenerations (test configurations).

Despite this issue, when the manifold has large symmetry, we can reduce the problem to checking only finite number of degenerations.
In fact, Datar--Sz\'{e}kelyhidi~\cite{DS16} proved that if a Fano manifold $X$ is {\it equivariant K-stable}, i.e., K-stable with respect to special degenerations that are $G$-equivariant for some reductive subgroup $G$ of $\Aut_0(X)$, then it admits a K\"{a}hler--Einstein metric.
Using this theorem, they could recover the theorem of Wang--Zhu which says that a toric Fano manifold admits a K\"{a}hler--Einstein metric if and only if the Futaki invariant vanishes \cite{wz}.
By Mabuchi's theorem \cite{mabuchi}, this is equivalent to that the barycenter of the moment polytope locates at the origin.

The spherical varieties consist of an important and large class of highly symmetric varieties which include toric varieties and (more generally) bi-equivariant compactifications of reductive Lie groups, called the {\it group compactifications}.
In \cite{Del17}, Delcroix proved that a smooth Fano group compactification admits a K\"{a}hler--Einstein metric if and only if the barycenter of the corresponding moment polytope translated by $-2 \rho$ locates in the relative interior of the cone generated by positive roots, where $2 \rho$ denotes the sum of all positive roots of $G$.
He also proved in \cite{Del20} that for $\mathbb{Q}$-Fano spherical varieties, this combinatorial condition is equivalent to the equivariant K-stability.

Very recently, Delcroix's theorem for smooth Fano group compactifications is generalized to $\mathbb{Q}$-Fano group compactifications by Li--Tian--Zhu~\cite{LTZ20}.
They proved that the same combinatorial criterion is also applicable to check the existence of singular K\"{a}hler--Einstein metric using a variational approach.
Combining this with the above result by Delcroix, we have the following

\begin{theorem}[Delcroix \cite{Del17, Del20}, Li--Tian--Zhu~\cite{LTZ20}]
Let $X$ be a $\mathbb{Q}$-Fano variety which is a bi-equivariant compactification of reductive complex Lie group $G$.
Then the followings are equivalent.
\begin{enumerate}
    \item[\rm (1)] $X$ admits a singular K\"{a}hler--Einstein metric.
    \item[\rm (2)] $X$ is K-stable with respect to special degenerations that are $G$-equivariant.
    \item[\rm (3)] $\textbf{bar}_{DH}(\Delta) \in 2 \rho + \Xi$, where $\textbf{bar}_{DH}(\Delta)$ is the barycenter of the moment polytope $\Delta$ with respect to the Duistermaat--Heckman measure and $\Xi$ is the relative interior of the cone generated by positive roots of $G$.
\end{enumerate}
\end{theorem}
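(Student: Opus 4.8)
The plan is to prove the three equivalences around the cycle $(3)\Rightarrow(1)\Rightarrow(2)\Rightarrow(3)$, the organizing idea being to translate the analytic and algebro-geometric data attached to $X$ into convex geometry on the moment polytope $\Delta$. Recall that a $\mathbb{Q}$-Fano bi-equivariant compactification $X$ of $G$ is encoded by a $W$-invariant polytope $\Delta$ in the character lattice of a maximal torus together with its colored fan, and that the anticanonical $\mathbb{Q}$-polarization corresponds to $K\times K$-invariant potentials, hence to convex functions on $\Delta$. Applying the $KAK$ integration formula on the open $G\times G$-orbit turns the complex \ma equation for a $K\times K$-invariant \ke potential into a real \ma equation for a convex function on $\Delta$ weighted by $\pi(x)=\prod_{\alpha>0}\langle\alpha,x\rangle^{2}$, which is precisely the density of the Duistermaat--Heckman measure. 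This reduction, due to Delcroix \cite{Del17}, is the common backbone of the two substantial implications and the reason $\textbf{bar}_{DH}(\Delta)$ is the governing invariant.

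The implication $(1)\Rightarrow(2)$ is the soft direction of the Yau--Tian--Donaldson correspondence in the singular setting: a singular \ke metric makes the Ding functional bounded below on finite-energy potentials with equality only along product degenerations, so every $G\times G$-equivariant special degeneration of $X$ has nonnegative Donaldson--Futaki invariant, strictly positive unless it is a product (equivalently, a \ke $\mathbb{Q}$-Fano variety is K-polystable). For $(2)\Leftrightarrow(3)$ I would follow Delcroix \cite{Del20} and compute the Donaldson--Futaki invariants of the $G\times G$-equivariant special degenerations directly: these are classified by the colored-fan combinatorics and, after normalization, are indexed by directions $\xi$ in a cone essentially dual to the one generated by the positive roots, and the normalized Donaldson--Futaki invariant in the direction $\xi$ is a convex function of $\xi$ whose first-order behaviour at the origin is governed by the pairing $\langle\xi,\ \textbf{bar}_{DH}(\Delta)-2\rho\rangle$, the shift by $2\rho$ entering through the Ricci form of the natural reference metric. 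Nonnegativity of all these invariants, with strictness off products, then amounts to $\textbf{bar}_{DH}(\Delta)-2\rho\in\Xi$; the equivariance reduction of Datar--Sz\'ekelyhidi \cite{DS16} guarantees that restricting attention to these degenerations loses nothing.

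The implication $(3)\Rightarrow(1)$ is the analytic heart of the statement and the step I expect to be the main obstacle. I would argue variationally, following Li--Tian--Zhu \cite{LTZ20}: transported to the space of bounded convex functions on $\Delta$ with the weight $\pi$, the Ding functional (equivalently the Mabuchi $K$-energy) becomes an explicit functional whose linear part pairs its argument against $\textbf{bar}_{DH}(\Delta)$, and condition (3), once the $2\rho$-translation is accounted for, is exactly what forces the linear part to be dominated by the energy/entropy part, so that the functional is coercive modulo the $G\times G$-action; this is the convex-geometric analogue of the Wang--Zhu toric estimate \cite{wz}. Coercivity yields a minimizer via the pluripotential existence theory (Berman--Boucksom--Guedj--Zeriahi, Berman--Boucksom--Jonsson), and one then checks that this minimizer is a genuine finite-energy solution of the \ma equation, hence the potential of a singular \ke metric in the sense of Li \cite{Li21}. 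The delicate points I expect to fight with are: controlling convex functions near the facets of $\Delta$ attached to the colors, where the Legendre duality degenerates; establishing the finite-energy property of the candidate minimizer; and matching the resulting pluripotential solution with an honest \ke current on the possibly singular variety $X$.
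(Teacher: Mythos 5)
Your proposal is essentially the same route as the paper, which offers no independent argument but simply assembles the cited results: Delcroix's reduction of the \ke equation to a real \ma equation on $\Delta$ and his computation of Donaldson--Futaki invariants of $G\times G$-equivariant special degenerations giving $(2)\Leftrightarrow(3)$ \cite{Del17, Del20}, Berman-type lower boundedness of the Ding functional for $(1)\Rightarrow(2)$, and the Li--Tian--Zhu variational/coercivity argument for $(3)\Rightarrow(1)$ \cite{LTZ20}. Your outline matches this decomposition faithfully (the appeal to Datar--Sz\'ekelyhidi is not actually needed for your cycle and does not directly apply in the singular setting, but this is harmless since $(2)$ is by definition stability with respect to equivariant degenerations).
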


In this paper, we study Gorenstein Fano group compactifications.
Recall that a complete normal variety $X$ is called \emph{Gorenstein Fano} if its anticanonical divisor $-K_X$ is Cartier and ample.
For toric varieties, it is well-known that classifying Gorenstein Fano toric varieties is equivalent to classifying reflexive lattice polytopes; see e.g. \cite[Theorem~8.3.4]{CLS11}. 
In particular, Gorenstein Fano toric surfaces correspond to $16$ equivalence classes of reflexive lattice polygons in the plane up to lattice equivalence (cf. \cite[Theorem~8.3.7]{CLS11}). 

By applying theorems for generalizing the results for the toric case, we classify all Gorenstein Fano bi-equivariant compactifications of semisimple complex Lie groups with rank two,
and determine the equivariant K-stability so as to conclude the admittance of (singular) K\"{a}hler--Einstein metrics.

\begin{theorem}
\label{Main theorem} 
There are exactly $60$ Gorenstein Fano ($22$ smooth, $38$ singular) varieties which are bi-equivariant compactifications of semisimple complex Lie groups with rank two.
Among them, $27$ ($16$ smooth, $11$ singular) varieties are equivariant $K$-stable and admit K\"{a}hler--Einstein metrics.
\vskip -0.5em
\begin{table}[h]
\begin{tabular}{|c|c|c|c|c|c|}\hline
\multirow{2}{*}{\rm Lie\ type} & \multirow{2}{*}{$\dim_{\mathbb{C}}X$} & \multirow{2}{*}{$G$} & {\footnotesize Gorenstein\ Fano}  & {\footnotesize $K$-stable {\rm($\exists$ KE)}}  & {\footnotesize $K$-unstable {\rm($\nexists$ KE)}}\\
\cline{4-6}
	&	& & {\tiny\rm total(smooth, singular)} & {\tiny\rm total(smooth, singular)} & {\tiny\rm total(smooth, singular)}
\\ \hline
\multirow{4}{*}{$\mathsf{A_1\times A_1}$}	&
\multirow{4}{*}{$6$}	&
$\SL_2(\mathbb C) \times \SL_2(\mathbb C)$	&	$15$ $(2,13) $  &	$3$ $(2,1) $ & $12$ $(\cdot, 12) $
\\ \cline{3-6}
		& & 	 $\PSL_2(\mathbb C) \times \PSL_2(\mathbb C)$	&	$7$ $(2,5) $  &	$5$ $(2,3) $ & $2$ $(\cdot, 2) $
\\ \cline{3-6}
		& & 	$\SL_2(\mathbb C) \times \PSL_2(\mathbb C)$	&	$14$ $(2,12) $  &	$4$ $(1,3) $ & $10$ $(\textbf{1},9) $
\\ \cline{3-6}
		& & 	$\SO_4(\mathbb C)$	&	$6$ $(3,3) $  &	$2$ $(1,1) $ & $4$ $(\textbf{2},2) $
\\ \hline
\multirow{2}{*}{$\mathsf{A_2}$}	&
\multirow{2}{*}{$8$}	&
$\SL_3(\mathbb C)$	& $5$ $(3,2) $  &	$2$ $(1,1) $ & $3$  $(\textbf{2},1) $
\\ \cline{3-6}
	& & 	$\PSL_3(\mathbb C)$	&	$3$ $(3,\cdot)$  &	$3$ $(3,\cdot) $ & $\cdot$
\\ \hline 
\multirow{2}{*}{$\mathsf{B_2}$}	&
\multirow{2}{*}{$10$}	&
$\Sp_4(\mathbb C)$	&	$4$ $(3,1) $  &	$2$ $(2,\cdot) $ & $2$ $(\textbf{1},1) $
\\ \cline{3-6}
	& & 	$\SO_5(\mathbb C)$	&	$4$ $(2,2) $  &	$4$ $(2,2) $ & $\cdot$ 
\\ \hline
$\mathsf{G_2}$	&
$14$	&
$G_2$	&	$2$ $(2,\cdot) $  &	$2$ $(2,\cdot) $ & $\cdot$
\\ \hline
\multicolumn{3}{|c|}{\rm total}		&	$60$ $(22,38) $  &	$27$  $(16,11)$ & $33$  $(\textbf{6},27)$
\\ \hline
\end{tabular}
\end{table}
\end{theorem}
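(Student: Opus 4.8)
The plan is to reduce Theorem~\ref{Main theorem} to a finite combinatorial enumeration, via the Luna--Vust classification of spherical embeddings, and then to decide equivariant K-stability for each resulting variety using the barycenter criterion in the theorem of Delcroix and Li--Tian--Zhu stated above. Recall that a bi-equivariant compactification of a semisimple complex group $G$ is a spherical $(G\times G)$-variety, hence is encoded by a colored fan in the cocharacter space $N_{\R}=\Hom(\Cstar,T)\otimes\R$ of a maximal torus $T\subset G$, where the colors are indexed by the simple roots of $G$ and map to $N_{\R}$ via the corresponding coroots. The first task is to translate ``Gorenstein Fano'' into this language: by Brion's canonical divisor formula, $-K_X$ is the sum of the $(G\times G)$-stable boundary divisors with coefficient $1$ together with the colors $D_\alpha$ with explicit positive integer coefficients, which accounts for the shift by $2\rho$ in condition~(3). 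Asking $-K_X$ to be Cartier and ample then becomes, exactly as in the toric dictionary between Gorenstein Fano varieties and reflexive polytopes, the condition that the moment polytope $\Delta$ of $-K_X$ --- a $W$-invariant lattice polytope in $M_{\R}$, equivalently its positive part in the closed positive Weyl chamber --- be ``reflexive'' in the sense adapted to group compactifications. Because $N$ has rank two, these data form a finite list, obtained from the $16$ reflexive polygons by imposing the $W$-symmetry and the lattice constraint coming from the isogeny type of $G$, and recording which colors occur in the fan.

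Next I would run this enumeration Lie type by Lie type. For each rank-two root system --- $\mathsf{A_1\times A_1}$, $\mathsf{A_2}$, $\mathsf{B_2}$, $\mathsf{G_2}$ --- and each isogeny type $G$ realizing it, list all Gorenstein Fano colored fans up to the action of the automorphism group of the root datum (the Weyl group extended by diagram automorphisms), and for each one decide whether $X$ is smooth: as in the toric case this reduces to a smoothness criterion on the colored fan, namely that the primitive ray generators of each maximal colored cone together with the coroots attached to its colors extend to a $\Z$-basis of $N$. Collecting the outcomes produces the first assertion, $60=22+38$, arranged as in the table.

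For each of the $60$ varieties I would then apply condition~(3) of the above theorem. For a group compactification the Duistermaat--Heckman measure on the moment polytope has polynomial density $\prod_{\alpha\in\Phi^+}\langle\alpha,\cdot\rangle^{2}$, up to a positive constant, so computing $\textbf{bar}_{DH}(\Delta)$ amounts to integrating this explicit weight against the (finitely many, explicitly known) polygons produced in the previous step, and then testing whether $\textbf{bar}_{DH}(\Delta)-2\rho$ lies in the relative interior $\Xi$ of the cone generated by the positive roots. By the theorem of Delcroix and Li--Tian--Zhu this test is simultaneously equivalent to equivariant K-stability and to the existence of a (singular) \ke metric, so tallying the varieties that pass it gives the second assertion, $27=16+11$, and its complement is the list of $33$ K-unstable varieties, none of which carries a \ke metric.

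The main obstacle is the enumeration step. The delicate points are fixing precisely how ``Gorenstein Fano'' reads once the colors are present --- the coefficients of the colors in $-K_X$ and the way reflexivity interacts with the $-2\rho$ shift --- and then carrying out the finite classification carefully over all isogeny types: the lattice $N$ varies with the isogeny type, so one abstract polygon may be reflexive for one group but not for another, and may give a smooth variety for one group and a singular one for another; thus one must be scrupulous about the equivalence relation used and about checking the smoothness criterion at every colored ray. Once this classification is in place, the remaining work is a large but routine collection of integrals of polynomials over polygons, and the \ke conclusions then follow formally from the above theorem.
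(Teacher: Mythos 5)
Your overall strategy (encode the compactifications by their moment polytopes/colored fans, translate Gorenstein Fano into a support condition on the polytope, enumerate in rank two, then decide K-stability by the Duistermaat--Heckman barycenter criterion) is the same as the paper's, and the K-stability step is handled correctly. But two of your intermediate reductions have genuine gaps. First, the enumeration is \emph{not} obtained from the $16$ reflexive polygons. For a group compactification the anticanonical divisor is $\sum_i 2D_i+\sum_j E_j$, so the Gorenstein condition says that the facets of $\Delta$ away from the Weyl walls lie on hyperplanes $\{\langle \nu, m-2\rho\rangle=-1\}$ with $\nu\in\mathcal N$ primitive; because of the shift by $2\rho$, the $W$-invariant polytope $\bigcup_{w\in W} w\Delta$ is not a reflexive polytope centered at the origin, and there is no dictionary sending these polytopes to $W$-symmetric reflexive polygons. (The counts already show this: $\SL_2\times\SL_2$ alone yields $15$ polytopes.) The finiteness and the actual list have to be produced directly from the shifted support condition, by constraining the primitive normals $m_i\alpha_1^\vee+n_i\alpha_2^\vee$ facet by facet and intersecting with the Weyl walls, which is what the paper's case analysis does for each isogeny type; your plan as written does not supply this argument.

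Second, your smoothness test is not a valid criterion. For spherical (in particular group) embeddings with colors, ``primitive ray generators of each maximal colored cone together with the attached coroots extend to a $\Z$-basis of $\mathcal N$'' neither implies nor is implied by smoothness in general; smoothness criteria in the colored setting are substantially more delicate. The paper instead uses the Alexeev--Katzarkov criterion on the toric polytope $\Delta^{toric}=\bigcup_w w\Delta$: smoothness of $X$ forces the Delzant condition, and the Delzant condition is sufficient only when no vertex of $\Delta^{toric}$ lies on a Weyl wall. This is deliberately only a partial criterion, and precisely the smooth cases whose polytopes have vertices on Weyl walls (e.g.\ $\mathbb Q^3\times\mathbb Q^3$, $\mathbb P^8$, $\mathrm{Lag}(4,8)$, the spinor variety $\mathbb S_5$, the double Cayley Grassmannian, and the blow-ups identified via Brion's factorization of birational morphisms for rank-two spherical varieties) must be recognized by separate geometric arguments. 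Without replacing your $\Z$-basis test by such a (partial) criterion plus these identifications, the split $60=22+38$ into smooth and singular varieties, and hence the table, cannot be certified by your argument.
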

\vskip -1em
We note that Delcroix \cite{Del20} already obtained the related results for the case of smooth $\SO_4(\mathbb C)$-compactifications, and Li--Tian--Zhu \cite{LTZ20} gave the full description of Gorenstein Fano $\SO_4(\mathbb C)$-compactifications including K-stabilities.
We also note that in \cite{Del15}, the case of all smooth toroidal simple group compactifications was treated.
\vskip 0.5em
One advantage of the combinatorial approach is that we can explicitly compute the value of the \emph{greatest Ricci lower bound} $R(X)$ (also called Tian's $\beta$-invariant),
which is a measurement how far the Fano manifold $X$ is from being K-stable. 
More precisely, Delcroix~\cite{Del17} proved that
for a smooth Fano group compactification without a K\"{a}hler--Einstein metric, the greatest Ricci lower bound satisfies
$$
R(X)=\sup \Big\{  t < 1 \colon \frac{1}{1-t}(2\rho-t\cdot \textbf{bar}_{DH}(\Delta)) \in  -\Xi + \Delta \Big\}.
$$
Recently, it is known that for a K-unstable Fano manifold $X$, the greatest Ricci lower bound $R(X)$ is equal to the delta invariant $\delta(X)$, 
which is defined by Fujita--Odaka~\cite{FO18} using the log canonical threshold \cite{BBJ21, CRZ19}.
Using this fact for the K-unstable smooth Fano varieties in the above theorem, we also obtain the following.

\begin{theorem}
\label{greatest Ricci lower bounds: Gorenstein Fano equivariant compactifications} 
Let $X$ be a K-unstable smooth Fano variety which is a bi-equivariant compactification of a semisimple complex Lie group $G$ with rank two.
Denote its Picard number by $\rho(X)$.
Then the greatest Ricci lower bound $R(X)$, equivalently the delta invariant $\delta(X)$ is as follows:
\vskip -0.5em
\begin{table}[h]
\begin{tabular}{|c|c|c|c|c|c|c|}\hline
 $G$ & $\SL_2\times \PSL_2$ &\multicolumn{2}{c|}{ $\SO_4(\mathbb C)$} & \multicolumn{2}{c|}{$\SL_3(\mathbb C)$}  & $\Sp_4(\mathbb C)$  
\\ \hline
  \xrowht{5pt} $\rho(X)$ & $3$ & $2$ &  $3$ & $2$  & $3$ & $3$ 
\\ \hline
  \xrowht{11pt} $R(X)$ & $\frac{8869}{9333} \approx 0.95$ 
  & $\frac{49}{51} \approx 0.96$ 
  & $ \frac{75257}{99843} \approx 0.75$ 
	& $\frac{1419621}{1493483} \approx 0.95$  
  & $\frac{21100419}{28437901} \approx 0.74$ 
  & $\frac{1046175339}{1236719713} \approx 0.84$
\\ \hline
\end{tabular}
\end{table}

\end{theorem}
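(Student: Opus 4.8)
The plan is to apply Delcroix's formula for $R(X)$ recorded above to each of the six $K$-unstable smooth varieties occurring in the statement --- the unique one for $G=\SL_2(\C)\times\PSL_2(\C)$, the unique one for $G=\Sp_4(\C)$, and two each for $G=\SO_4(\C)$ and $G=\SL_3(\C)$ --- so that the entire argument is combinatorial. For every one of these varieties the moment polytope $\Delta$, or rather its intersection $\Delta_+$ with the positive Weyl chamber (a lattice polygon in $\mathfrak{a}^\ast\cong\R^2$), is already available from the classification underlying Theorem~\ref{Main theorem}. First I would fix, for the relevant rank-two root system, the set of positive roots $\Phi^+$, the vector $2\rho=\sum_{\al\in\Phi^+}\al$, and the cone $\Xi$ they generate; since the rank is two, $\Xi$ is a $2$-dimensional simplicial cone and $-\Xi+\Delta$ is an unbounded rational convex polygon described by finitely many affine inequalities.

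Next, one computes the Duistermaat--Heckman barycenter. For a group compactification the Duistermaat--Heckman measure on $\Delta_+$ has density $\kappa(p)=\prod_{\al\in\Phi^+}\langle\al,p\rangle^2$ against Lebesgue measure, so
\[
\textbf{bar}_{DH}(\Delta)=\frac{\int_{\Delta_+}p\,\kappa(p)\,dp}{\int_{\Delta_+}\kappa(p)\,dp}.
\]
Both integrals are integrals of an explicit polynomial of degree at most $2|\Phi^+|+1$ over a planar lattice polygon, hence can be evaluated exactly as rational numbers by triangulating $\Delta_+$ and integrating monomials over the triangles. This is the source of the large denominators in the table, and it is where essentially all of the bookkeeping lies; one must in particular keep track of which integral lattice is in play, since the two cases in type $\mathsf{A}_1\times\mathsf{A}_1$ appearing here ($\SL_2(\C)\times\PSL_2(\C)$ and $\SO_4(\C)$) differ precisely by this choice and therefore by which polygons arise as moment polytopes.

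Finally I would carry out the one-dimensional optimization in the formula. Writing $b=\textbf{bar}_{DH}(\Delta)$ and substituting $u=\tfrac{1}{1-t}\in[1,\infty)$ for $t\in[0,1)$, one has $\tfrac{1}{1-t}(2\rho-t\,b)=b+u\,(2\rho-b)$, so the point in question traces the ray issuing from $2\rho$ (at $u=1$) in the direction $2\rho-b$; because $X$ is $K$-unstable, $b\notin 2\rho+\Xi$ by the theorem of Delcroix and Li--Tian--Zhu, equivalently $R(X)<1$, so this ray leaves the polyhedron $-\Xi+\Delta$ at a finite parameter (the admissible set of $t$ being an interval by convexity). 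Intersecting the ray with each defining half-space of $-\Xi+\Delta$ singles out the largest admissible $u^\ast$, and then $R(X)=1-1/u^\ast$; since $2\rho$, $b$, and all the facet normals and offsets are rational, $u^\ast$ and $R(X)$ are rational, matching the listed values. Running this computation for the six varieties and invoking $\delta(X)=R(X)$ for $K$-unstable Fano manifolds \cite{BBJ21,CRZ19} gives the theorem. The main obstacle is organizational rather than conceptual: keeping the six families of polygons, lattices, and root data mutually consistent and propagating the exact arithmetic faithfully through the polynomial integrations.
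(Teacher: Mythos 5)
Your proposal is correct and follows essentially the same route as the paper: compute the Duistermaat--Heckman barycenter of each of the six moment polytopes by exact rational integration, then determine $R(X)$ from the exit point of the ray through $2\rho$ issuing from the barycenter on the boundary of $\Delta-\mathcal C^+$ (your $R(X)=1-1/u^\ast$ is precisely the paper's Corollary~\ref{formula for greatest Ricci lower bounds}, $R(X)=|\overrightarrow{AQ}|/|\overrightarrow{CQ}|$), and conclude $\delta(X)=R(X)$ via \cite{BBJ21,CRZ19}. The only cosmetic difference is that the paper identifies the single relevant facet of $\Delta-\mathcal C^+$ case by case (and cites Delcroix's Example~6.9 for the $\Sp_4(\mathbb C)$ value), whereas you would test all defining half-spaces systematically.
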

\vskip -1em
The greatest Ricci lower bound  of the last case was already computed in \cite[Example~6.9]{Del17}. 
We remark that by a result of Golota \cite{Gol20}, one can also compute the delta invariants of K-unstable singular Fano varieties applying the same formula.
\vskip 0.5em
The above theorem induces interesting facts in K\"{a}hler--Ricci flow. For a Fano manifold $X$ with a K\"{a}hler form $\omega_0 \in 2\pi c_1(X)$, it is proved by Cao~\cite{Cao85} that the normalized K\"{a}hler--Ricci flow 
$$\frac{d\omega(t)}{dt} = -\text{Ric}(\omega(t)) + \omega(t), \quad \omega(0) = \omega_0,$$
has solution for all $t\geq0$.
Moreover, Perelman~\cite{Perelman} proved that if $X$ has the unique K\"{a}hler--Einstein metric $\omega_{KE}$, then $\omega(t)$ smoothly converges to $\omega_{KE}$.
However, in general, it may not have the limit.
It is proved in \cite{LTZ18} that if a smooth Fano equivariant compactification $X$ of a semisimple complex Lie group admits no K\"{a}hler--Einstein metrics, then any solution of the K\"{a}hler--Ricci flow is of type II, that is, the curvature of $\omega(t)$ is not uniformly bounded. 
As a result, Li--Tian--Zhu~\cite{LTZ18} obtain three examples having type II solutions of the K\"{a}hler--Ricci flow from smooth Fano equivariant compactifications of $\SO_4(\mathbb C)$ and $\Sp_4(\mathbb C)$. 
We get three additional such examples from the above classification.  

\begin{corollary}
The manifolds in Theorem~\ref{greatest Ricci lower bounds: Gorenstein Fano equivariant compactifications}  
have type II solutions of the K\"{a}hler--Ricci flows. 
\end{corollary}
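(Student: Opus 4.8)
The plan is to obtain this as a direct consequence of the classification of Theorem~\ref{Main theorem}, the K\"{a}hler--Einstein criterion of Delcroix \cite{Del17, Del20} and Li--Tian--Zhu \cite{LTZ20} recalled in the introduction, and the K\"{a}hler--Ricci flow theorem of Li--Tian--Zhu \cite{LTZ18}. First I would record that each variety $X$ appearing in Theorem~\ref{greatest Ricci lower bounds: Gorenstein Fano equivariant compactifications} is, by its very description, a \emph{smooth} Gorenstein Fano bi-equivariant compactification of a semisimple complex Lie group $G$ of rank two, and that these six varieties are exactly the smooth $K$-unstable entries (the boldface counts) in the last column of the table in Theorem~\ref{Main theorem}. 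In particular each such $X$ is a smooth Fano variety that fails to be $K$-stable with respect to $G$-equivariant special degenerations.

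Next I would apply the equivalence (1)$\Leftrightarrow$(2) in the theorem of Delcroix and Li--Tian--Zhu stated above: since $X$ is equivariantly $K$-unstable it admits no singular K\"{a}hler--Einstein metric, and, $X$ being smooth, this is precisely the statement that $X$ carries no K\"{a}hler--Einstein metric. (One could equally cite Theorem~\ref{greatest Ricci lower bounds: Gorenstein Fano equivariant compactifications} itself: each listed value satisfies $R(X) = \delta(X) < 1$, which already excludes a K\"{a}hler--Einstein metric.) Then I would invoke \cite{LTZ18}, which asserts that for a smooth Fano bi-equivariant compactification of a semisimple complex Lie group admitting no K\"{a}hler--Einstein metric, every solution $\omega(t)$, $t \in [0,\infty)$, of the normalized K\"{a}hler--Ricci flow $\frac{d\omega(t)}{dt} = -\Ric(\omega(t)) + \omega(t)$ has curvature that is not uniformly bounded on $X \times [0,\infty)$, i.e.\ is of type II. Applying this to the six varieties of Theorem~\ref{greatest Ricci lower bounds: Gorenstein Fano equivariant compactifications} then completes the proof.

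There is essentially no analytic work to do here beyond quoting the cited results; the only step needing attention---the ``main obstacle'' in a bookkeeping sense---is verifying that the hypotheses of these theorems really do hold for exactly these six varieties: that each is smooth (so that the absence of a \emph{singular} K\"{a}hler--Einstein metric upgrades to the absence of a K\"{a}hler--Einstein metric, and so that \cite{LTZ18} applies at all), that each is genuinely a compactification of a \emph{semisimple} rather than merely reductive group, and that $K$-instability has been checked $G$-equivariantly. All three points are supplied by the proof of Theorem~\ref{Main theorem}.
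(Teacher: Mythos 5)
Your proposal is correct and is essentially the paper's own argument: the corollary is justified (in the introduction, without a separate proof environment) exactly by noting that the six varieties of Theorem~\ref{greatest Ricci lower bounds: Gorenstein Fano equivariant compactifications} are smooth Fano bi-equivariant compactifications of semisimple rank-two groups without K\"{a}hler--Einstein metrics (being K-unstable, equivalently $R(X)=\delta(X)<1$), so the result of Li--Tian--Zhu \cite{LTZ18} applies and gives type II solutions of the K\"{a}hler--Ricci flow. Your extra bookkeeping about smoothness, semisimplicity, and equivariant K-instability matches what Theorem~\ref{Main theorem} supplies, so there is nothing to add.
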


This paper is organized as follows.
In Section~2, we recall the theory of spherical varieties by focusing on equivariant group compactifications. 
In Section~3, we discuss a combinatorial criterion for the existence of a singular K\"{a}hler--Einstein metric on a $\mathbb Q$-Fano group compactification and its greatest Ricci lower bound in terms of the barycenter of its moment polytope with respect to the Duistermaat--Heckman measure.  
In Section~4, we give a classification of Gorenstein Fano bi-equivariant compactifications of semisimple complex Lie groups with rank two, and determine which of them are K-stable and admit (singular) K\"{a}hler--Einstein metrics.  

\vskip 1em

\noindent
\textbf{Acknowledgements}. 
Jae-Hyouk Lee was supported by the National Research Foundation of Korea (NRF) grant funded by the Korea government (MSIT) (NRF-2019R1F1A1058962). 
Kyeong-Dong Park was supported by the National Research Foundation of Korea (NRF) grant funded by the Korea government (MSIT) (NRF-2019R1A2C3010487, NRF-2021R1C1C2092610). 
Sungmin Yoo was supported by the Institute for Basic Science (IBS-R032-D1).  

Kyeong-Dong Park would like to sincerely thank Michel Brion for his kind answers to several questions related to moment polytopes. 
The authors thank the anonymous referee for the careful reading and many valuable suggestions. 

\section{Spherical varieties and algebraic moment polytopes} 

Let $G$ be a connected reductive algebraic group over $\mathbb C$. 

\subsection{Spherical varieties and colors} 
Bi-equivariant compactifications of $G$ are spherical $G$-varieties. 
In the following, we review general notions and results about spherical varieties. 
We refer \cite{Knop91}, \cite{Timashev11} and \cite{Gandini18} as references for spherical varieties. 

\begin{definition}
\label{spherical variety}
A normal variety $X$ equipped with an action of $G$ is called \emph{spherical} if a Borel subgroup $B$ of $G$ acts on $X$ with an open orbit.
\end{definition}

Let $G/H$ be an open $G$-orbit of a spherical variety $X$ and $T$ be a maximal torus of $B$. 
For a character $\chi \in \mathfrak X(B) = \mathfrak X(T)$, 
let $\mathbb C(G/H)^{(B)}_{\chi} = \{ f \in \mathbb C(G/H) : b \cdot f = \chi(b) f \text{ for all } b \in B \}$ be the set of $B$-semi-invariant functions in $\mathbb C(G/H)$ associated to $\chi$, where $\mathbb C(G/H) = \mathbb C(X)$ denotes the function field of $G/H$. 

The \emph{spherical weight lattice} $\mathcal M$ of $G/H$ is defined as a subgroup of the character group $\mathfrak X(T)$ such that each element $\chi \in \mathcal M$ has the non-zero set of $B$-semi-invariant functions, that is, 
$$\mathcal M = \{ \chi \in \mathfrak X(T) : \mathbb C(G/H)^{(B)}_{\chi} \neq 0 \}.$$
Note that every function $f_{\chi}$ in $\mathbb C(G/H)^{(B)}_{\chi}$ is determined by its weight $\chi$ up to constant. 
This is because any $B$-invariant rational function on $G/H$ is constant, that is, $\mathbb C(G/H)^{B} = \mathbb C$. 
The spherical weight lattice $\mathcal M$ is a free abelian group of finite rank. 
We define the \emph{rank} of $G/H$ as the rank of the lattice $\mathcal M$. 
Let $\mathcal N = \Hom(\mathcal M, \mathbb Z)$ denote its dual lattice together with the natural pairing $\langle \, \cdot \, , \, \cdot \, \rangle \colon \mathcal N \times \mathcal M \to \mathbb Z$. 

As the open $B$-orbit of a spherical variety $X$ is an affine variety, its complement has pure codimension one and is a finite union of $B$-stable prime divisors. 

\begin{definition}
\label{color}
For a spherical homogeneous space $G/H$, 
$B$-stable prime divisors in $G/H$ are called \emph{colors} of $G/H$. 
We denote by $\mathfrak D = \{ D_1, \cdots, D_k \}$ the set of colors of $G/H$.
\end{definition}


Every discrete $\mathbb Q$-valued valuation $\nu$ of the function field $\mathbb C(G/H)$ induces a homomorphism $\rho(\nu) \colon \mathcal M \to \mathbb Q$ which is defined by $\langle \rho(\nu), \chi \rangle = \nu(f_{\chi})$, where $f_{\chi} \in \mathbb C(G/H)^{(B)}_{\chi}\backslash \{ 0 \}$. 
Hence, we get a map $$\rho \colon \{ \text{discrete $\mathbb Q$-valued valuations on $G/H$} \} \to \mathcal N \otimes \mathbb Q.$$ 
Luna and Vust \cite{LV83} showed that the restriction of $\rho$ to the set $\mathcal V$ of $G$-invariant discrete valuations on $G/H$ is injective. 
Since the map $\rho$ is injective on $\mathcal V$, we may consider $\mathcal V$ as a subset of $\mathcal N \otimes \mathbb Q$ via $\rho$. 
It is known that $\mathcal V$ is a full-dimensional (co)simplicial cone of $\mathcal N \otimes \mathbb Q$, which is called the \emph{valuation cone} of $G/H$. 
For a $B$-stable divisor $D$ in $X$, we have a discrete valuation $\nu_D$ associated to $D$, that is, $\nu_D(f)$ is the vanishing order of $f$ along $D$. 
For the sake of simplicity, we simplify the notation $\rho(\nu_D)$ as $\rho(D)$. 

\begin{remark} 
The normal equivariant embeddings of a given spherical homogeneous space are classified by combinatorial objects called \emph{colored fans}, 
which generalize the fans appearing in the classification of toric varieties. 
In a brief way, a colored fan is a finite collection of colored cones, 
which is a pair $(\mathcal C, \mathfrak F)$ consisting of $\mathfrak F \subset \mathfrak D$ and a strictly convex cone $\mathcal C \subset \mathcal N \otimes \mathbb Q$ generated by $\rho(\mathfrak F)$ and finitely many elements in the valuation cone $\mathcal V$ (see \cite{LV83} and \cite{Knop91} for details).  
\end{remark}

\subsection{Algebraic moment polytopes and anticanonical line bundles}

Let $L$ be a $G$-linearized ample line bundle on a spherical $G$-variety $X$. 
By the multiplicity-free property of spherical varieties, the algebraic moment polytope $\Delta(X, L)$ encodes the structure of representation of $G$ in the spaces of multi-sections of tensor powers of $L$.

\begin{definition}
\label{definition of moment polytope}
The \emph{algebraic moment polytope} $\Delta(X, L)$ of $L$ with respect to $B$ is defined as 
the closure of $\bigcup_{k \in \mathbb N} \Delta_k / k$ in $\mathfrak X(T) \otimes \mathbb R$, 
where $\Delta_k$ is a finite set consisting of (dominant) weights $\lambda$ such that
\begin{equation*}
H^0(X, L^{\otimes k}) = \bigoplus_{\lambda \in \Delta_k} V_G(\lambda).
\end{equation*} 
Here, $V_G(\lambda)$ means the irreducible representation of $G$ with highest weight $\lambda$. 
\end{definition}

The algebraic moment polytope $\Delta(X, L)$ for a polarized (spherical) $G$-variety $X$ was introduced by Brion in \cite{Brion87} 
as a purely algebraic version of the Kirwan polytope. 
This is indeed the convex hull of finitely many points in $\mathfrak X(T) \otimes \mathbb R$ (see \cite{Brion87}).

For a spherical $G$-variety $X$, consider the open $B$-orbit. 
As $B$ stabilizes the open $B$-orbit, the stabilizer $P$ under the $G$-action of the open $B$-orbit is a parabolic subgroup of $G$. 
Let $\rho_P$ be the half sum of roots of $P$.
Then, by \cite[Remark~4.3]{GH15} 
the $B$-weight $\xi$ of a $B$-semi-invariant global section $s \in \Gamma(X, K_X^{-1})$ of the anticanonical bundle $K_X^{-1}$ 
is equal to $2 \rho_P$. 
From \cite[Theorem~4.2]{Brion97} and \cite[Section~3.6]{Luna97}, 
we can get an explicit expression of the anticanonical divisor $- K_X$ for a spherical variety $X$. 
Furthermore, based on the works of Brion~\cite{Brion89, Brion97}, Gagliardi and Hofscheier~\cite[Section 9]{GH15} described the (algebraic) moment polytope of the anticanonical line bundle on a Gorenstein Fano spherical variety.

\begin{proposition}
\label{moment polytope of spherical variety}
Let $X$ be a Gorenstein Fano embedding of a spherical homogeneous space $G/H$. 
If a $B$-stable Weil divisor 
$-K_X = \sum_{i=1}^k m_i D_i + \sum_{j=1}^{\ell} E_j $ 
represents the anticanonical line bundle $K_X^{-1}$ for colors $D_i$ and $G$-stable divisors $E_j$ in $X$, 
the moment polytope $\Delta(X, K_X^{-1})$ is $2 \rho_P + Q_X^*$, 
where the polytope $Q_X$ is the convex hull of the set 
\begin{equation*}
\left\{ \frac{\rho(D_i)}{m_i} : i= 1, \cdots, k \right\} \cup \{ \rho(E_j) 
: j= 1, \cdots , \ell \}
\end{equation*}  
in $\mathcal N \otimes \mathbb R$ and 
its dual polytope $Q_X^*$ is defined as 
$\{ m \in \mathcal M\otimes \mathbb R : \langle n, m \rangle \geq -1 \text{ for every } n \in Q_X \}$. 
\end{proposition}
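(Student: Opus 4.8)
The plan is to deduce the statement from the standard description of the moment polytope of a polarized spherical variety, specialized to the anticanonical bundle. Since $X$ is Gorenstein Fano, $K_X^{-1}$ is an ample line bundle carrying the canonical $G$-linearization. The $B$-stable effective divisor $\sum_{i=1}^{k} m_i D_i + \sum_{j=1}^{\ell} E_j$ representing $K_X^{-1}$ is cut out by a global section $s \in \Gamma(X, K_X^{-1})$, and $s$ is a $B$-eigenvector because its zero divisor is $B$-stable; by \cite[Remark~4.3]{GH15} its $B$-weight is $2 \rho_P$. Thus $\operatorname{div}(s) = \sum_{i} m_i D_i + \sum_{j} E_j$, so $\nu_{D_i}(s) = m_i$ and $\nu_{E_j}(s) = 1$, and $s$ will serve as a fixed reference rational section of $K_X^{-1}$.

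Next I would give the half-space description of the moment polytope. For each $q \geq 1$, I compare highest-weight vectors in $H^0(X, K_X^{-q})$ against $s^{\otimes q}$: if $\lambda \in \Delta_q$, then the highest-weight vector $s_\lambda$ of $V_G(\lambda) \subset H^0(X, K_X^{-q})$ can be written $s_\lambda = f \cdot s^{\otimes q}$ with $f \in \mathbb C(G/H)^{(B)}$ of weight $\lambda - 2q \rho_P$ (unique up to scalar, by multiplicity-freeness of spherical varieties), and $s_\lambda$ is a regular global section if and only if $\operatorname{div}(f) + q \operatorname{div}(s) \geq 0$. The only $B$-stable prime divisors of $X$ are the colors $D_1, \dots, D_k$ and the $G$-stable prime divisors $E_1, \dots, E_\ell$, along which $\operatorname{div}(f)$ has orders $\langle \rho(D_i), \lambda - 2q \rho_P \rangle$ and $\langle \rho(E_j), \lambda - 2q \rho_P \rangle$. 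Hence $\lambda \in \Delta_q$ if and only if $\lambda - 2q \rho_P \in \mathcal M$ and
$$\langle \rho(D_i), \lambda - 2q \rho_P \rangle \geq - q m_i \ \ (1 \leq i \leq k), \qquad \langle \rho(E_j), \lambda - 2q \rho_P \rangle \geq - q \ \ (1 \leq j \leq \ell).$$
Dividing by $q$, taking the union over all $q$, and passing to the closure --- where ampleness of $K_X^{-1}$ is what guarantees this union is dense in the full polytope rather than in a proper subset --- yields
$$\Delta(X, K_X^{-1}) = 2 \rho_P + \big\{\, m \in \mathcal M \otimes \mathbb R : \langle \rho(D_i), m \rangle \geq - m_i, \ \langle \rho(E_j), m \rangle \geq - 1 \,\big\}.$$

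Finally I would recognize the bracketed polytope as $Q_X^*$. Each $m_i$ is a positive integer, so $\langle \rho(D_i), m \rangle \geq - m_i$ is equivalent to $\langle \rho(D_i)/m_i, m \rangle \geq - 1$, and the bracketed set becomes $\{\, m : \langle n, m \rangle \geq -1 \text{ for all } n \in S \,\}$ with $S = \{ \rho(D_i)/m_i \} \cup \{ \rho(E_j) \}$. Since the condition $\langle \, \cdot \, , m \rangle \geq -1$ is preserved under convex combinations, replacing $S$ by its convex hull $Q_X$ does not change this set, which is therefore exactly $Q_X^*$. This gives $\Delta(X, K_X^{-1}) = 2 \rho_P + Q_X^*$.

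The step I expect to be the main obstacle is the half-space description: one must justify, using the Luna--Vust theory and multiplicity-freeness, that the colors together with the $G$-stable divisors exhaust the $B$-stable prime divisors of $X$, that $\Delta_q$ is identified with the indicated set of highest weights, and --- most delicately --- that ampleness of $K_X^{-1}$ forces $\bigcup_q \Delta_q / q$ to be dense in the entire half-space polytope. Once the general polarized-spherical-variety machinery of Brion \cite{Brion87, Brion89, Brion97} is granted, the remaining steps are routine bookkeeping with the pairing $\langle \, \cdot \, , \, \cdot \, \rangle$ and elementary convexity.
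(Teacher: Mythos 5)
Your argument is correct and is essentially the standard Brion/Gagliardi--Hofscheier proof: the paper itself gives no proof of this proposition, quoting it from \cite[Section~9]{GH15}, and your route --- fixing the $B$-semi-invariant anticanonical section $s$ of weight $2\rho_P$, writing $B$-eigen-sections of $K_X^{-q}$ as $f\cdot s^{\otimes q}$ with $f\in\mathbb C(G/H)^{(B)}$, and converting regularity into the half-space inequalities along the colors and $G$-stable divisors --- is exactly the argument underlying that reference. One small correction: the density of $\bigcup_q \Delta_q/q$ in the half-space polytope is not what ampleness buys you; it follows from the fact that this polytope is rational and nonempty (it contains $2\rho_P$, corresponding to $s$ itself), so every rational point, after clearing denominators by a suitable $q$, is realized by the $B$-eigen-section $f_\chi\, s^{\otimes q}$, and ampleness plays no role in that step.
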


\subsection{Gorenstein Fano group compactifications} 

We refer the reader to \cite{Timashev03}, \cite{AB04}, \cite{AK05}, \cite{Timashev11} and \cite{Del17} as references for group compactifications. 
Any reductive algebraic group $G$ is isomorphic to a \emph{symmetric} homogeneous space $(G \times G) / \text{diag} (G)$ 
under the action of the group $G \times G$ for the involution $\theta(g_1, g_2)=(g_2, g_1)$, $g_1, g_2 \in G$. 
Indeed, $G$ is spherical with respect to the action of $G \times G$ by left and right multiplication from the Bruhat decomposition. 
If $T$ is a maximal torus of $G$, then $T \times T$ is a maximal torus of $G \times G$ and 
we get the spherical weight lattice 
$$\mathcal M = \mathfrak X((T \times T) / \text{diag} (T)) = \{ (\lambda, -\lambda) : \lambda \in \mathfrak X(T) \}.$$ 
Thus, $\mathcal M$ can be identified with the character lattice $\mathfrak X(T)$ of $T$ by the projection to the first coordinate. 
Under this identification, we also identify the dual lattice $\mathcal N$ of $\mathcal M$ with the dual character lattice $\mathfrak X(T)^{\vee}$. 

We now describe the colors of $G$. 
As $B^- B \subset G$ is open for the opposite Borel $B^-$ of $B$, by the Bruhat decomposition the colors of $G$ coincide with the Schubert divisors $D_{\alpha} := \overline{B^- s_{\alpha} B}$ for simple root $\alpha$, where $s_{\alpha} \in N_G(T)$ is a representative for the simple reflection associated to $\alpha$. 
Therefore, the colors are identified with the simple roots of $G$ and the image of $D_{\alpha}$ under the map $\rho$ is equal to the correponding coroot $\alpha^{\vee}$, that is, $\rho (D_{\alpha}) = \alpha^{\vee}$ (see e.g. \cite[Example~3.6]{Gandini18}). 
Recall that the \emph{coroot} $\alpha^{\vee}$ of a root $\alpha$ is defined as the unique element in $[\mathfrak g, \mathfrak g] \cap \mathfrak t$ 
such that   $\alpha(x) = \displaystyle \frac{2 \, \kappa(x, \alpha^{\vee})}{\kappa(\alpha^{\vee}, \alpha^{\vee})}$ for all $x \in \mathfrak t$, 
where $\kappa$ is the Cartan--Killing form on the Lie algebra $\mathfrak g$ of $G$ and $\mathfrak t$ denotes the Lie algebra of $T$. 

\begin{corollary}\label{gorenstein polytope}
\label{moment polytope}
Let $X$ be a Gorenstein Fano equivariant compactification of a reductive group $G$. 
If $E_j$ are $G$-stable divisors in $X$ for $1 \leq j \leq \ell$, then the moment polytope $\Delta(X, K_X^{-1})$ is a polytope of which facets lie in any Weyl wall and the affine hyperplanes 
$\{ m \in \mathcal M \otimes \mathbb R : \langle \rho(E_j), m - 2 \rho \rangle = -1 \}$ for $1 \leq j \leq \ell$, 
where $2 \rho = \displaystyle \sum_{\alpha \in \Phi^+} \alpha$ is the sum of all positive roots of $G$.
\end{corollary}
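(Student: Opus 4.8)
The plan is to specialize Proposition~\ref{moment polytope of spherical variety} to the case of a group compactification, using the explicit description of the colors of $G = (G\times G)/\mathrm{diag}(G)$ recalled just above the statement. First I would write $-K_X = \sum_{\alpha} m_\alpha D_\alpha + \sum_{j=1}^\ell E_j$, where $D_\alpha$ runs over the colors (= Schubert divisors, indexed by the simple roots $\alpha$) and $E_j$ over the $G$-stable divisors. By the formula of Brion--Luna (cited from \cite[Theorem~4.2]{Brion97}, \cite[Section~3.6]{Luna97}), the coefficient of each color $D_\alpha$ in the anticanonical divisor of a group compactification is exactly the value $\langle \alpha^\vee, 2\rho\rangle = \langle\rho(D_\alpha),2\rho\rangle$, i.e.\ twice the coefficient of $\alpha$ in $\rho$ written in the basis of fundamental weights; in any case $m_\alpha = \langle \rho(D_\alpha), 2\rho\rangle$. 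This is the one external input I would invoke without reproving.

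Next I would translate the dual-polytope description. Proposition~\ref{moment polytope of spherical variety} says $\Delta(X,K_X^{-1}) = 2\rho_P + Q_X^*$ where $Q_X$ is the convex hull of $\{\rho(D_\alpha)/m_\alpha\}_\alpha \cup \{\rho(E_j)\}_j$ and $Q_X^* = \{m : \langle n,m\rangle \ge -1\ \forall n\in Q_X\}$. For a group compactification the open $B\times B$-orbit has stabilizer $P = B$ itself (since $G$ is $(G\times G)$-spherical with the big Bruhat cell as open $B\times B$-orbit), so $\rho_P = \rho$ and $2\rho_P = 2\rho$; hence $\Delta(X,K_X^{-1}) = 2\rho + Q_X^*$. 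A point $m$ lies in $\Delta$ iff $\langle n, m - 2\rho\rangle \ge -1$ for every vertex $n$ of $Q_X$. The vertices coming from the $G$-stable divisors give precisely the half-spaces $\langle \rho(E_j), m-2\rho\rangle \ge -1$, whose boundary hyperplanes are the stated affine hyperplanes. The vertices coming from the colors give $\langle \rho(D_\alpha)/m_\alpha,\, m-2\rho\rangle \ge -1$, i.e.\ $\langle \alpha^\vee, m-2\rho\rangle \ge -m_\alpha = -\langle \alpha^\vee, 2\rho\rangle$, which simplifies to $\langle \alpha^\vee, m\rangle \ge 0$: exactly the condition that $m$ lie on the dominant side of the Weyl wall $\{\langle\alpha^\vee,\cdot\rangle = 0\}$. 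So each facet of $\Delta(X,K_X^{-1})$ lies either in a Weyl wall or in one of the hyperplanes $\{\langle\rho(E_j), m-2\rho\rangle = -1\}$, which is the claim.

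The main obstacle — really the only nontrivial point — is pinning down that the color coefficient $m_\alpha$ equals $\langle \alpha^\vee, 2\rho\rangle$, so that the color inequalities collapse exactly to the Weyl-chamber inequalities with no residual constant. I would extract this from the Brion--Luna anticanonical formula for symmetric/wonderful-type spaces: for a group compactification each color appears in $-K_X$ with multiplicity equal to the corresponding entry of $2\rho$ in the fundamental-weight coordinates, equivalently the pairing $\langle \rho(D_\alpha), 2\rho\rangle$ since $\rho(D_\alpha)=\alpha^\vee$ and $\langle\alpha^\vee,\varpi_\beta\rangle = \delta_{\alpha\beta}$. A secondary (routine) point is the identification $P = B$ and hence $2\rho_P = 2\rho$, which follows because the open $B\times B$-orbit in $G$ is the big cell $B^- B$ with trivial extra stabilizer, already noted in the discussion of colors above. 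Granting these two facts, the rest is the elementary dualization of a polytope described by its vertices into one described by its facet inequalities, and the corollary follows immediately.
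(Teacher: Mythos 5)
Your proposal is correct and follows essentially the same route as the paper: specialize Proposition~\ref{moment polytope of spherical variety} to the group case using $\rho(D_\alpha)=\alpha^\vee$, and dualize so that the color inequalities collapse to the Weyl-wall conditions $\langle\alpha^\vee,m\rangle\geq 0$ while the $G$-stable divisors give the stated affine hyperplanes. The only cosmetic difference is that the paper takes each color coefficient to be $2$ directly from \cite[Section~5.2]{AB04}, whereas you obtain it as $\langle\alpha^\vee,2\rho\rangle$ (which indeed equals $2$), and you spell out the identification $2\rho_P=2\rho$ that the paper leaves implicit.
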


\begin{proof}
Suppose that the rank of $G$ is $r$. 
For colors $D_i$ and $G$-stable divisors $E_j$ in $X$, 
a Weil divisor $-K_X = \sum_{i=1}^r 2 D_i + \sum_{j=1}^{\ell} E_j$ represents the anticanonical line bundle $K_X^{-1}$
by \cite[Section~5.2]{AB04}. 
Then, for a color $D_{i}$ associated to a simple root $\alpha_i$, 
the image $\rho(D_{i}) = \alpha_i^{\vee}$ gives an inequality 
$$\left\langle \frac{\alpha_i^{\vee}}{2}, m - 2 \rho  \right \rangle \geq -1 
\quad \iff \quad \left\langle \alpha_i^{\vee}, m \right\rangle \geq 0 \quad \text{ for all } 1 \leq i \leq r. $$  
Thus, the images of all colors $D_1, \cdots, D_r$  determine the positive restricted Weyl chamber, and the result
follows from Proposition \ref{moment polytope of spherical variety}.
\end{proof}

Conversely, given reductive algebraic group $G$ a lattice 
polytope in $\mathcal M \otimes \mathbb R$ with one vertex as the origin and Weyl walls as facets
determine a Gorenstein Fano group compactification if the other facets lie on affine hyperplanes 
$\{ m \in \mathcal M \otimes \mathbb R : \langle \nu, m - 2 \rho \rangle = -1 \}$ 
defined by primitive elements $\nu$ in $\mathcal N$. 
In general, Gagliardi and Hofscheier~\cite{GH15} classify the Gorenstein Fano spherical embeddings of a spherical homogeneous space $G/H$ in terms of \emph{$G/H$-reflexive polytopes}. 

\begin{example}[Gorenstein Fano compactifications of $\SL_2(\mathbb C)$ and $\PSL_2(\mathbb C)$] 
\label{compactifications of SL2 and PSL2}
As $\SL_2(\mathbb C)$ is simply-connected, 
the spherical weight lattice $\mathcal M$ of $\SL_2(\mathbb C)$ is spanned by the fundamental weight $\varpi_1$ and 
its dual lattice $\mathcal N$ is spanned by the coroot $\alpha_1^{\vee}$. 
The only possible lattice moment polytope is $[0, 3 \varpi_1]$ from the inequality $\langle -\alpha_{1}^{\vee}, (x-2)\varpi_1\rangle \geq -1$.
Indeed, this is the moment polytope of the 3-dimensional hyperquadric $\mathbb Q^3$ 
which is obtained by an equivariant open embedding 
$$\SL_2(\mathbb C) \hookrightarrow X:= \{ [x, t] : \det(x) = t^2 \} \subset \mathbb P(\text{Mat}_{2 \times 2}(\mathbb C) \oplus \mathbb C), x \mapsto [x, 1].$$  
Note that we have the anticanonical divisor $-K_{\mathbb Q^3} = 2D + E$ with $\rho(E) = -\alpha_{1}^{\vee}$, where $D=\overline{B^- s_{\alpha_1} B}$ and $E=\mathbb Q^3 \backslash \SL_2(\mathbb C)$. 

On the other hand, since $\PSL_2(\mathbb C) = \SL_2(\mathbb C)/ \{ \pm I \} \cong \SO_3(\mathbb C)$ is of adjoint type, 
the spherical weight lattice $\mathcal M$ of $\PSL_2(\mathbb C)$ is spanned by the simple root $\alpha_1 = 2 \varpi_1$ and 
its dual lattice $\mathcal N$ is spanned by the coweight $\varpi_1^{\vee}$. 
In this case, the only possible lattice moment polytope is $[0, 2 \alpha_1] = [0, 4 \varpi_1]$ from the inequality $\langle -\varpi_1^{\vee}, (x-1)\alpha_1 \rangle \geq -1$. 
Indeed, this is the moment polytope of the 3-dimensional projective space $\mathbb P^3$ 
which is an example of \emph{wonderful compactifications} constructed by De~Concini and Procesi~\cite{dCP83} (see also \cite[Example~2.4]{LPY21}). 
\qed
\end{example}


\section{Singular K\"{a}hler--Einstein metrics and greatest Ricci lower bounds} 

Let $X$ be a $\mathbb{Q}$-Fano variety, i.e., a normal projective complex variety such that $-K_X$ is $\mathbb{Q}$-Cartier and ample.

\subsection{Singular K\"{a}hler--Einstein metrics} 

By Kodaira's embedding theorem, we can embed a $\mathbb{Q}$-Fano variety $X$ into a projective space $\mathbb P^N$ using a basis of $H^0(X,K_X^{-k})$ for some $k>0$.
Let $\omega_{FS}$ be the Fubini--Study metric of $\mathbb P^N$.
We choose a reference K\"{a}hler form $\omega_0$ on $X$ by
$$
\omega_0:=\frac{1}{k}\omega_{FS}|_X\in 2\pi c_1(X).
$$
Let $h$ be the normalized Ricci potential of $\omega_0$ such that $\Ric(\omega_0) - \omega_0 = \sqrt{-1} \partial \bar{\partial} h$ on $X_{reg}$, 
where $X_{reg}$ means the complement of the singular locus of $X$. 

\begin{definition}
\label{Singular KE metrics}
Let $X$ be a $\mathbb{Q}$-Fano variety with log terminal singularities. 
A \emph{singular K\"{a}hler--Einstein metric} on $X$ is a current $\omega_{\varphi}$ satisfying the complex Monge--Amp\`{e}re equation
$$\omega_{\varphi}^n = e^{h-\varphi} \omega_0^n,$$
where $\omega_{\varphi} = \omega_0 + \sqrt{-1} \partial \bar{\partial} \varphi$ and $\varphi\in \text{PSH}(X,\omega_0)$ has {\it full Monge--Amp\`{e}re mass}. 
(For precise definitions, we refer \cite{BBEGZ19}.)
\end{definition}

\begin{remark}
If $\omega_{\varphi}$ is a singular K\"{a}hler--Einstein metric,
then $\omega_{\varphi}\in C^{\infty}(X_{reg})$ and it satisfies
$$
\Ric(\omega_{\varphi}) = \omega_{\varphi} \qquad \text{ on } X_{reg}.
$$
\end{remark}

Li, Tian and Zhu~\cite[Theorem 1.2]{LTZ20} proved a criterion for the existence of singular K\"{a}hler--Einstein metrics on $\mathbb Q$-Fano group compactifications as a generalization of Delcroix's result \cite{Del17} for smooth Fano group compactifications (see also \cite{Del20}).
Let $X$ be a bi-equivariant compactification of a reductive group $G$, and
let $\Phi = \Phi(G, T)$ be the root system of $G$ with respect to a maximal torus $T$. 
Fix a set of positive roots $\Phi^+$, and denote by $2 \rho = \sum_{\alpha \in \Phi^+} \alpha$ the sum of positive roots and by $\mathcal C^+$ the cone generated by positive roots in $\Phi^+$.
Let $\Xi$ be the relative interior of $\mathcal C^+$.

\begin{theorem}[\cite{Del17, LTZ20}]
\label{KE criterion for Q-Fano group compactifications}
For a $\mathbb Q$-Fano compactification $X$ of a reductive group $G$ with the moment polytope $\Delta:=\Delta(X, K_X^{-1})$, 
$X$ admits a singular K\"{a}hler--Einstein metric if and only if 
$$
\textbf{bar}_{DH}(\Delta) \in 2 \rho + \Xi,
$$ 
where $\textbf{bar}_{DH}(\Delta)$ is the barycenter of the moment polytope with respect to the Duistermaat--Heckman measure $\displaystyle \prod_{\alpha \in \Phi^+} \kappa(\alpha, p)^2 \, dp$ and $\kappa(\cdot,\cdot)$ is the Cartan--Killing inner product.  
\end{theorem}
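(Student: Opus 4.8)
Since this criterion is the combination of Delcroix~\cite{Del17, Del20} and Li--Tian--Zhu~\cite{LTZ20}, I will only sketch the strategy. The plan is to follow the variational approach, whose starting point is that the $G\times G$-action collapses the complex Monge--Amp\`ere equation to a real one on a polytope. First I would fix a maximal compact subgroup $K \subset G$ with complexified maximal torus $T$, let $\mathfrak a \subset \Lie(T)$ be the real span of the coroots with positive Weyl chamber $\mathfrak a^+$, and invoke the Cartan decomposition $G = K\exp(\mathfrak a^+)K$. A $K\times K$-invariant potential $\varphi$ with $\omega_\varphi \in 2\pi c_1(X)$ restricts along $\exp(\mathfrak a)$ to a $W$-invariant smooth strictly convex function $u$; using the description of $\Delta = \Delta(X,K_X^{-1})$ from Corollary~\ref{gorenstein polytope} (together with $2\rho_P$ being the $B$-weight of an anticanonical section), the gradient image $\nabla u(\mathfrak a)$ is a fixed translate of $2\Delta$. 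Rewriting $\omega_\varphi^n$ in these coordinates produces the Jacobian $\prod_{\alpha\in\Phi^+}\sinh^2\langle\alpha,x\rangle$ of the exponential map, and after the Legendre transform $x\mapsto p=\nabla u(x)$ the equation $\omega_\varphi^n = e^{h-\varphi}\omega_0^n$ becomes a real Monge--Amp\`ere equation on $\Delta$ whose reference measure is precisely the Duistermaat--Heckman measure $\prod_{\alpha\in\Phi^+}\kappa(\alpha,p)^2\,dp$; the appearance of this density is forced by the Weyl dimension formula applied to the multiplicity-free decomposition $H^0(X,K_X^{-k}) = \bigoplus_{\lambda\in k\Delta}V_G(\lambda)$.

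Next I would run the variational argument. Restricted to $K\times K$-invariant potentials the Ding functional reduces to a functional $\mathcal F$ on convex functions on $\mathfrak a$ which is convex along affine segments, and by the existence theory for complex Monge--Amp\`ere equations on klt pairs (Berman--Boucksom--Eyssidieux--Guedj--Zeriahi~\cite{BBEGZ19}, and the properness criterion of Berman--Boucksom--Jonsson~\cite{BBJ21}) a singular K\"ahler--Einstein metric exists if and only if $\mathcal F$ is coercive on this reduced space. The decisive point is the asymptotic slope of $\mathcal F$ along an affine ray $u+t\xi$: an integration by parts against the Duistermaat--Heckman measure shows this slope equals, up to a positive constant, $\langle\xi,\,2\rho-\textbf{bar}_{DH}(\Delta)\rangle$. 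Since the admissible asymptotic directions $\xi$ are cut out by the geometry of $\Delta$ --- whose facets lie on the Weyl walls by Corollary~\ref{gorenstein polytope} --- and of the valuation cone, coercivity of $\mathcal F$ is equivalent to the positivity of this linear functional on the cone $\mathcal C^+$ generated by the positive roots, i.e.\ to $\textbf{bar}_{DH}(\Delta)\in 2\rho+\Xi$. For the converse, a K\"ahler--Einstein metric can be taken $K\times K$-invariant (by uniqueness up to $\Aut_0(X)$) and thus yields a minimizer of $\mathcal F$; differentiating $\mathcal F$ along the same affine rays at the minimizer returns exactly the barycenter inequality.

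The step I expect to be the main obstacle is the regularity and a priori estimates needed to promote the abstract coercive minimizer to a genuine singular K\"ahler--Einstein current with full Monge--Amp\`ere mass. Concretely, one must control the convex solution $u$ near the boundary facets of $\Delta$ --- which correspond to the $G$-stable boundary divisors $E_j$ and to the Weyl walls --- and rule out any escape of mass along the lower-dimensional $G$-orbits, and one must check that the reduction from the complex to the real equation remains valid across those orbits and not only on the open orbit $G$. This is precisely where the analytic input of \cite{LTZ20} (and pluripotential theory on $\mathbb Q$-Fano varieties) is indispensable, and where the singular case genuinely goes beyond the smooth case treated in \cite{Del17}.
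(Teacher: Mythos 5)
This theorem is quoted in the paper from \cite{Del17, LTZ20} without an internal proof, so there is no argument of the paper to compare against; your sketch faithfully reproduces the strategy of the cited works (the $K\times K$-invariant reduction via the Cartan decomposition to $W$-invariant convex functions on $\mathfrak a$, the appearance of the Duistermaat--Heckman density $\prod_{\alpha\in\Phi^+}\kappa(\alpha,p)^2\,dp$, and coercivity of the reduced Ding functional detected by the slope $\langle\xi,2\rho-\textbf{bar}_{DH}(\Delta)\rangle$ along affine rays, with the pluripotential input of \cite{BBEGZ19, BBJ21} in the background). The one place you should be careful is only a matter of bookkeeping, not substance: the admissible asymptotic directions $\xi$ lie in (the closure of) the valuation cone, i.e.\ the antidominant chamber, so the positivity of the slopes is dual to $\textbf{bar}_{DH}(\Delta)-2\rho$ lying in the relative interior $\Xi$ of the cone spanned by positive roots rather than literal positivity on $\mathcal C^+$ itself; and, as you correctly flag, promoting the coercive minimizer to a singular K\"ahler--Einstein current with full Monge--Amp\`ere mass is exactly the analytic content supplied by \cite{LTZ20}, which is why the paper cites rather than reproves this statement.
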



\subsection{Greatest Ricci lower bounds of Fano manifolds} 
Let $X$ be a Fano manifold, that is, a compact complex manifold whose anticanonical line bundle $K_X^{-1}$ is ample. 
We define the \emph{greatest Ricci lower bound} $R(X)$ of $X$ as
$$ 
R(X):=\sup \{  0 \leq t \leq 1 \colon \text{there exists a K\"{a}hler form } \omega \in c_1(X) \text{ with } \Ric(\omega) > t \, \omega \}.
$$

\begin{theorem}[Sz\'{e}kelyhidi~\cite{Sze11}]
Let $\omega\in 2\pi c_1(X)$.
$$
R(X)=\sup \{  0 \leq t \leq 1 \colon \text{there exists a smooth solution } \varphi_t \text{ of } \omega_{\varphi_t}^n = e^{h-t\varphi} \omega^n\}.
$$
\end{theorem}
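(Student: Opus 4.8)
The plan is to characterize the set of admissible $t$ in Sz\'{e}kelyhidi's formulation as the set of $t$ for which the twisted K\"{a}hler--Einstein equation $\omega_{\varphi_t}^n = e^{h - t\varphi_t}\omega^n$ is solvable, and then to recognize this set as exactly the half-open interval $[0, R(X))$ together with possibly its right endpoint, so that the two suprema coincide. The key link is the continuity method: for a fixed reference form $\omega \in 2\pi c_1(X)$ with Ricci potential $h$, consider the family of equations $(\ast)_t\colon \omega_{\varphi_t}^n = e^{h - t\varphi_t}\omega^n$ for $t \in [0,1]$. At $t = 0$ this is Yau's equation, solvable by Calabi--Yau, and I claim that if $(\ast)_{t_0}$ has a smooth solution $\varphi_{t_0}$, then $\Ric(\omega_{\varphi_{t_0}}) > t_0\,\omega_{\varphi_{t_0}}$, which shows $R(X) \ge t_0$; conversely if there is a K\"{a}hler form $\omega' \in c_1(X)$ with $\Ric(\omega') > t\,\omega'$ for some $t$, then by the Aubin--Yau openness argument (or Sz\'{e}kelyhidi's original argument) the equation $(\ast)_{t'}$ is solvable for all $t' < t$.

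First I would establish the forward inclusion. Suppose $\varphi := \varphi_{t}$ solves $(\ast)_t$. Taking $\ddc \log$ of both sides of $\omega_\varphi^n = e^{h - t\varphi}\omega^n$ and using $\Ric(\omega) - \omega = \ddc h$ together with $-\ddc \log \omega_\varphi^n = \Ric(\omega_\varphi)$ (in the local sense), one computes
$$
\Ric(\omega_\varphi) = \Ric(\omega) - \ddc h + t\,\ddc\varphi = \omega + t(\omega_\varphi - \omega) = t\,\omega_\varphi + (1 - t)\,\omega.
$$
Since $\omega > 0$ and $t \le 1$, this gives $\Ric(\omega_\varphi) \ge t\,\omega_\varphi$ with strict inequality when $t < 1$ (and when $t=1$ it gives a genuine K\"{a}hler--Einstein metric, hence trivially the defining condition holds for all $t<1$); a small perturbation $t \mapsto t - \epsilon$ then yields the strict inequality $\Ric(\omega_\varphi) > (t-\epsilon)\,\omega_\varphi$ needed in the definition of $R(X)$. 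Hence every $t$ appearing in the right-hand supremum is $\le R(X)$, so the right-hand supremum is $\le R(X)$.

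For the reverse inequality I would run the standard openness/continuity argument: let $S = \{ t \in [0,1] : (\ast)_t \text{ is solvable}\}$. It contains $0$ and is open in $[0,1]$ by the implicit function theorem, the linearization being $\varphi \mapsto \Delta_{\omega_\varphi}\varphi + t\varphi$, whose invertibility on the relevant H\"older spaces follows from the eigenvalue estimate: the first nonzero eigenvalue of $-\Delta_{\omega_\varphi}$ is $> t$ precisely because $\Ric(\omega_\varphi) = t\,\omega_\varphi + (1-t)\,\omega > t\,\omega_\varphi$ and one invokes the Bochner/Lichnerowicz inequality. The crux — and the main obstacle — is the a priori $C^0$ (hence, by Yau's estimates, $C^\infty$) bound on $\varphi_t$ for $t$ strictly below $R(X)$: given $t < R(X)$, pick $s$ with $t < s < R(X)$ and a K\"{a}hler form $\omega_s \in 2\pi c_1(X)$ with $\Ric(\omega_s) > s\,\omega_s$; one uses $\omega_s$ to produce the needed uniform estimate along the path, exactly as in Sz\'{e}kelyhidi's proof, via the properness of the relevant functional under the hypothesis $\Ric(\omega_s) > s\,\omega_s$. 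This closedness of $S \cap [0, R(X))$ forces $[0, R(X)) \subset S$, whence the right-hand supremum is $\ge R(X)$. Combining the two inclusions gives equality. I expect the delicate point to be the uniform $C^0$-estimate near the endpoint, which is precisely where the hypothesis on $R(X)$ is consumed and which requires the choice of auxiliary reference metric rather than any soft argument.
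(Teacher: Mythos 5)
The paper does not prove this statement at all; it is quoted directly from Sz\'{e}kelyhidi \cite{Sze11}, so there is no internal argument to compare yours against, and your proposal has to stand on its own. Your easy direction does: from $\omega_{\varphi_t}^n=e^{h-t\varphi_t}\omega^n$ one gets $\Ric(\omega_{\varphi_t})=t\,\omega_{\varphi_t}+(1-t)\,\omega$, which is $>t\,\omega_{\varphi_t}$ already for $t<1$ (no perturbation needed), and the case $t=1$ gives a K\"ahler--Einstein metric; hence the right-hand supremum is $\le R(X)$. The openness statement via the twisted Bochner/eigenvalue argument is also standard (Aubin).

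The genuine gap is in the reverse inequality, and it is exactly at the point you flag and then wave through. For $t<s<R(X)$ the definition of $R(X)$ only provides \emph{some} K\"ahler form $\omega_s\in 2\pi c_1(X)$ with $\Ric(\omega_s)>s\,\omega_s$; this form has no a priori relation to the fixed reference $\omega$ in the path $\omega_{\varphi_t}^n=e^{h-t\varphi_t}\omega^n$ whose solvability you must establish. Writing that "one uses $\omega_s$ to produce the needed uniform estimate along the path, exactly as in Sz\'{e}kelyhidi's proof, via the properness of the relevant functional" is circular as a standalone proof: the implication (existence of a metric with $\Ric>s\,\omega_s$) $\Rightarrow$ (uniform $C^0$ bounds, equivalently coercivity of the functional attached to Aubin's path with reference $\omega$, for all parameters $t<s$) is essentially the content of the theorem itself, not an available black box. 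A complete argument must explain how to transfer information from $\omega_s$ to the path based at $\omega$; the heart of Sz\'{e}kelyhidi's argument is such a transfer, exploiting that $\omega_s$ tautologically solves a twisted equation $\Ric(\alpha)=s\,\alpha+(1-s)\beta$ with twist $\beta=(\Ric(\omega_s)-s\,\omega_s)/(1-s)\in 2\pi c_1(X)$, and then running a continuity argument in the twist/reference form (from $\beta$ to $\omega$, at the cost of lowering the parameter strictly below $s$), with openness from the invertibility of the twisted linearization and a priori estimates that use the strict positivity of the twist and the slack $t<s$. Without supplying that mechanism (or an actual proof of properness of the correctly twisted energy from the hypothesis), your sketch asserts the crux rather than proving it.
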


In other words, $R(X)$ is the supremum of the existence of Aubin's continuity path:
$$\Ric(\omega_{\varphi_t}) = t \, \omega_{\varphi_t} + (1 - t) \omega.
$$

\begin{remark}
$R(X)$ is an invariant of the Fano manifold $X$ in the sense that this is independent of choice of $\omega\in 2\pi c_1(X)$.
This invariant was first studied by Tian~\cite{tian92}, and was explicitly defined by Rubinstein~\cite{Rubinstein08,Rubinstein09}, where it was called Tian's $\beta$-invariant.
\end{remark}

\begin{remark}
By definition, if $X$ admits a K\"{a}hler--Einstein metric, then the greatest Ricci lower bound $R(X)$ is equal to 1. 
If $X$ does not admit a K\"{a}hler--Einstein metric, 
$R(X)$ measures somehow a Fano manifold $X$ fails to be K\"{a}hler--Einstein. 
However, $R(X)=1$ does not guarantee $X$ to be K\"{a}hler--Einstein. 
For example, Tian~\cite{tian97} constructed the unstable deformation of the Mukai--Umemura 3-fold having $R(X)=1$ (see \cite[Section~3]{Sze11}). 
\end{remark}

\begin{theorem}[Li \cite{Li17}]
$R(X)=1$ if and only if $X$ is K-semistable.
\end{theorem}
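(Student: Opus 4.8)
The plan is to prove the two implications separately, reducing the theorem to the identification of $R(X)$ with the stability threshold $\delta(X)$ of Fujita--Odaka. The guiding formula is $R(X)=\min\{1,\delta(X)\}$, combined with the valuative criterion $\delta(X)\geq 1\iff X$ is K-semistable. Granting these, the statement is immediate, so the work lies in establishing the formula on the side where it is not already cited.

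For the implication $R(X)=1\Rightarrow X$ K-semistable I would argue by contraposition, and here the cited equality of $R(X)$ with the delta invariant for K-unstable varieties does almost all of the work. Suppose $X$ is \emph{not} K-semistable, i.e. K-unstable. Then the valuative criterion gives $\delta(X)<1$, and since for a K-unstable Fano manifold one has $R(X)=\delta(X)$, we conclude $R(X)=\delta(X)<1$. Contrapositively, $R(X)=1$ forces K-semistability. This direction is therefore essentially formal once the delta-invariant description is granted.

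The substantive content is the reverse implication: K-semistability, i.e. $\delta(X)\geq 1$, implies $R(X)=1$. By Sz\'ekelyhidi's theorem quoted above, it suffices to solve Aubin's continuity path $\Ric(\omega_{\varphi_t})=t\,\omega_{\varphi_t}+(1-t)\omega$ for every $t<1$; equivalently, to show that the twisted Ding (or Mabuchi) functional $\mathcal D_t$ is coercive on $\mathrm{PSH}(X,\omega)$. The strategy, following Berman--Boucksom--Jonsson, is to pass to the non-Archimedean Ding functional $\mathcal D^{\mathrm{NA}}_t$ and to express its values on divisorial valuations $v$ in the form $A_X(v)-t\,S(v)$ up to normalization, where $A_X(v)$ is the log discrepancy and $S(v)$ the expected order of vanishing of $-K_X$ along $v$. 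Taking the infimum over $v$ identifies the coercivity threshold of $\{\mathcal D_t\}$ with $\min\{1,\delta(X)\}$: for every $t<\delta(X)$ one obtains a strictly positive coercivity slope $\delta(X)-t>0$, and the variational method together with the a priori estimates of the continuity method upgrades this to a genuine smooth solution $\varphi_t$. Since $\delta(X)\geq 1$, this yields solvability for all $t<1$, whence $R(X)\geq 1$, and as $R(X)\leq 1$ always we obtain $R(X)=1$.

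The main obstacle is precisely this analytic-to-algebraic bridge in the reverse direction: establishing uniform a priori estimates for the twisted complex Monge--Amp\`ere equations that do not degenerate as $t\uparrow 1$, and justifying that coercivity of $\mathcal D_t$ is exactly governed by strict positivity of the non-Archimedean slope $\delta(X)-t$. Openness of the solvable parameter set is standard, but the passage from the valuative lower bound on $\mathcal D^{\mathrm{NA}}_t$ to a quantitative coercivity estimate for $\mathcal D_t$ near the threshold is the delicate step; once it is in place, the chain $R(X)=1\iff\delta(X)\geq 1\iff X$ K-semistable follows formally.
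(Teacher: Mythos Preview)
The paper does not prove this theorem at all: it is quoted as a result of Li \cite{Li17} and used as background, with no argument supplied. So there is no ``paper's own proof'' to compare against.

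Your proposal is not so much a proof of Li's theorem as a deduction of it from the \emph{subsequent} result $R(X)=\min\{1,\delta(X)\}$ of \cite{BBJ21,CRZ19}, which the paper cites two lines later as a separate theorem. Once that formula and the valuative criterion $\delta(X)\geq 1\Leftrightarrow X$ K-semistable are granted, Li's statement is a tautology; you acknowledge this yourself. The remaining paragraphs sketch how one might establish the coercivity-slope identification behind the BBJ/CRZ formula, but this is a sketch of \emph{their} theorem, not of Li's. Li's original argument in \cite{Li17} proceeds differently, via the partial $C^0$-estimate and a direct analysis of the continuity method together with the Donaldson--Futaki invariant of limiting test configurations, predating the $\delta$-invariant formalism entirely. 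Your route is mathematically legitimate and arguably cleaner in hindsight, but it is anachronistic relative to the cited reference and, within the logic of this paper, essentially reduces one cited black box to another rather than supplying an independent proof.
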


The greatest Ricci lower bound $R(X)$ of a Fano manifold $X$ is closely related with Tian's $\alpha$-invariant (or the global log canonical threshold).
If $X$ does not admit a K\"{a}hler--{E}instein metric then by \cite[Theorem~2.1]{tian87} we have a lower bound of $R(X)$ in terms of the alpha invariant 
$$
R(X) \geq \alpha(X) \cdot \displaystyle \frac{\dim X + 1}{\dim X}.
$$

On the other hand, the greatest Ricci lower bound $R(X)$ is also related with the $\delta$-invariant $\delta(X, -K_X)$, defined by Fujita and Odaka~\cite{FO18} using log canonical thresholds of basis type divisors.
In fact, for a K-unstable Fano manifold $X$, the greatest Ricci lower bound $R(X)$ is equal to the delta invariant $\delta(X, -K_X)$.
More precisely, we have
\begin{theorem}[\cite{BBJ21,CRZ19}]
$R(X) = \min\{ 1, \delta(X, -K_X) \}$.
\end{theorem}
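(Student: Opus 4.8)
The plan is to route through the variational description of Aubin's continuity path. By Sz\'{e}kelyhidi's theorem recalled above, $R(X)$ is the supremum of those $t \in [0,1]$ for which the twisted equation $\Ric(\omega_{\varphi_t}) = t\,\omega_{\varphi_t} + (1-t)\,\omega$ admits a smooth solution, and the heart of the matter is the pair of implications, valid for $t \in [0,1)$: (a) if the path is solvable at $t$, then $\delta(X,-K_X) \ge t$; and (b) if $\delta(X,-K_X) > t$, then the path is solvable at $t$. Granting (a) and (b) the theorem is immediate: by (a) every solvable parameter satisfies $t \le \delta(X,-K_X)$, so $R(X) \le \delta(X,-K_X)$, and together with the trivial bound $R(X) \le 1$ this gives $R(X) \le \min\{1,\delta(X,-K_X)\}$; conversely any $t < \min\{1,\delta(X,-K_X)\}$ satisfies both $t<1$ and $\delta(X,-K_X) > t$, hence is solvable by (b), so $R(X) \ge t$, and letting $t$ increase to $\min\{1,\delta(X,-K_X)\}$ yields the reverse inequality.

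To prove (a) and (b) one uses that for $t<1$ the twist $(1-t)\omega$ is a genuine K\"{a}hler form, so the twisted equation is of \emph{semipositively twisted} type; consequently --- in contrast with the genuine Fano equation $t=1$, where the Futaki invariant/reductive automorphism obstruction intervenes --- its solvability is \emph{equivalent} to the coercivity (properness) of the twisted Ding functional $D_t$ on the space of K\"{a}hler potentials, by the twisted form of the variational Yau--Tian--Donaldson principle \cite{BBJ21}. Coercivity of $D_t$ is in turn translated into a non-Archimedean statement: $D_t$ is coercive if and only if the non-Archimedean twisted Ding functional $D^{\mathrm{NA}}_t$ is uniformly positive on the space of test configurations, equivalently on finitely generated filtrations of the anticanonical ring, and --- via the approximation of arbitrary filtrations by divisorial valuations and the fact that $\delta$ is computed by such valuations (Fujita--Odaka, Blum--Jonsson, Li--Xu, Xu--Zhuang) --- this is governed by the divisorial valuations $\operatorname{ord}_E$ of prime divisors $E$ over $X$. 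Evaluating $D^{\mathrm{NA}}_t$ on $\operatorname{ord}_E$ produces, after the standard computation, a quantity proportional to $A_X(E) - t\, S_{-K_X}(E)$ with $S_{-K_X}(E) = \tfrac{1}{(-K_X)^n}\int_0^\infty \operatorname{vol}(-K_X - xE)\,dx$; comparing with the valuative formula $\delta(X,-K_X) = \inf_E A_X(E)/S_{-K_X}(E)$ gives (a), while the uniform version of the same computation --- noting that $\delta(X,-K_X) > t$ forces $A_X(E) \ge (1+\varepsilon)\,t\,S_{-K_X}(E)$ for a fixed $\varepsilon > 0$ and all $E$ --- gives uniform positivity of $D^{\mathrm{NA}}_t$, hence (b).

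The main obstacle --- the reason the theorem is attributed to \cite{BBJ21,CRZ19} rather than being elementary --- is exactly this equivalence, for the continuity-path (twisted) problem, between analytic solvability and uniform non-Archimedean/valuative stability: it amounts to a full \emph{uniform Yau--Tian--Donaldson theorem}, whose proof relies on the pluripotential-theoretic compactness and coercivity estimates of the variational approach, on the $\mathrm L^1$ geometry of geodesics in the space of K\"{a}hler potentials, and on the structural results identifying $\delta$ with the valuative threshold and showing it is computed (or approximated) by divisorial valuations. A second, more hands-on route --- closer to \cite{CRZ19} --- would replace the functional-analytic input by Demailly--Koll\'{a}r/Tian-type estimates for log canonical thresholds of $m$-basis-type anticanonical divisors, using $\delta(X,-K_X) = \lim_{m\to\infty}\delta_m(X,-K_X)$ together with a refinement at the level of $m$-basis divisors of the classical continuity-method bound $R(X) \ge \tfrac{\dim X + 1}{\dim X}\,\alpha(X)$; the estimates there are heavier but bypass the twisted variational existence theorem.
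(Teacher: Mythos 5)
The paper does not prove this statement at all: it is imported verbatim, with attribution, from \cite{BBJ21} and \cite{CRZ19}, and is then only \emph{used} (via Theorem~\ref{Greatest Ricci lower bounds} and Corollary~\ref{formula for greatest Ricci lower bounds}) to identify the computed values of $R(X)$ with $\delta(X,-K_X)$ for the K-unstable smooth compactifications. So there is no in-paper argument to compare against; what you have written is a reconstruction of the proof strategy of the cited references, and as such it is accurate. The formal reduction to your implications (a) and (b) is sound, and your chain --- Sz\'{e}kelyhidi's description of $R(X)$ via the continuity path, the fact that for $t<1$ the twist $(1-t)\omega$ is a genuine K\"{a}hler form so the automorphism obstruction disappears and solvability is equivalent to coercivity of the twisted Ding functional, the translation of coercivity into uniform positivity of the non-Archimedean functional, and the identification of the resulting valuative threshold $\inf_E A_X(E)/S_{-K_X}(E)$ with $\delta$ --- is exactly the route of \cite{BBJ21}, while your second paragraph correctly locates the alternative, basis-divisor/lct route of \cite{CRZ19}. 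The one caveat worth stating plainly is that your proposal is a roadmap rather than a self-contained proof: the steps you invoke (the twisted/uniform variational Yau--Tian--Donaldson equivalence, the approximation of filtrations by divisorial valuations, and the valuative characterization of $\delta$) \emph{are} the content of the cited theorems, not lemmas you establish; also your phrase ``semipositively twisted'' should really be ``strictly positively twisted,'' since it is the strict positivity of $(1-t)\omega$ for $t<1$ that makes solvability equivalent to coercivity without any Futaki-type obstruction. Within the scope of what the paper asks of this statement, there is no genuine gap.
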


For any smooth Fano equivariant compactification of complex Lie groups, 
Delcroix \cite{Del17} obtained an explicit formula for the greatest Ricci lower bound in terms of the moment polytope as follows.
Note that this is a generalization of Li's results for toric manifolds \cite{Li11},
since the root system of this case is trivial so that the cone generated by positive roots is restricted to the origin, the Duistermaat--Heckman measure is the Lebesgue measure, and $2\rho=0$. 
\begin{theorem}[Delcroix~\cite{Del17}]\label{Greatest Ricci lower bounds}
For a smooth Fano group compactification $X$ of $G$,
$$
R(X)=\sup \left \{ t \in (0, 1) : 2 \rho + \frac{t}{1-t} (2 \rho - \textbf{bar}_{DH}(\Delta)) \in \textup{Relint}(\Delta - \mathcal C^+) \right \},
$$ 
where $\textup{Relint}(\Delta - \mathcal C^+)$ means the relative interior of the Minkovski sum of the moment polytope $\Delta$ and the negative $- \mathcal C^+$ of the cone generated by positive roots in $\Phi^+$.  
\end{theorem}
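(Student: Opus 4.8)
The plan is to adapt the variational method used for toric Fano manifolds (Wang--Zhu \cite{wz}, Li \cite{Li11}), building on the reduction of the complex Monge--Amp\`ere equation on a group compactification to a real Monge--Amp\`ere equation on the moment polytope that already underlies the proof of Theorem~\ref{KE criterion for Q-Fano group compactifications}.

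First I would invoke Sz\'ekelyhidi's theorem: $R(X)$ is the supremum of $t\in(0,1)$ for which the continuity equation $\omega_{\varphi_t}^n = e^{h - t\varphi_t}\,\omega_0^n$ has a smooth solution. Since $X$ is a $(G\times G)$-variety and solutions of this equation are unique, each $\varphi_t$ must be invariant under the maximal compact subgroup $K\times K$ of $G\times G$. By the $KAK$-decomposition, such potentials correspond to $W$-invariant smooth convex functions on the Cartan subalgebra, and the continuity equation becomes a real Monge--Amp\`ere equation having the Duistermaat--Heckman density $\prod_{\alpha\in\Phi^+}\kappa(\alpha,\cdot)^2$ as reference measure; the gradient of the potential maps onto the moment polytope $\Delta$, and the translation by $2\rho$ enters through the $B$-weight $2\rho_P$ of the anticanonical section together with the logarithmic derivative of this density.

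Next I would reformulate solvability at parameter $t$ as coercivity of the associated twisted Ding-type functional on the space of bi-invariant potentials. Following the standard scheme, this functional is convex along geodesics and its growth at infinity is controlled by a single linear functional $L_t$, so coercivity holds exactly when $L_t$ is strictly positive on every nonzero admissible direction. The admissible directions are the support functions of the convex set $\Delta - \mathcal C^+$ (the Minkowski summand $-\mathcal C^+$ reflecting the asymptotic behaviour dictated by the valuation cone of $G$), and a direct computation of $L_t$ --- in which $\textbf{bar}_{DH}(\Delta)$ appears as the moment of the right-hand side and $2\rho$ as the moment of the reference --- shows $L_t>0$ on all such directions if and only if
$$
2 \rho + \frac{t}{1-t}\bigl(2 \rho - \textbf{bar}_{DH}(\Delta)\bigr) \in \textup{Relint}(\Delta - \mathcal C^+),
$$
which gives the supremum in the statement. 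To turn coercivity into an actual solution one then runs the continuity method: openness is the implicit function theorem, while for closedness the coercivity produces a $C^0$-estimate on the normalized potential \`a la Tian, after which the higher-order estimates follow from Aubin--Yau-type arguments adapted to the (degenerate) reference measure.

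I expect the main obstacle to be twofold: carrying out the convex-geometric computation that matches the coercivity threshold precisely with the polytopal inclusion above --- tracking every contribution of $2\rho$ and of the weight $\prod_{\alpha\in\Phi^+}\kappa(\alpha,\cdot)^2$, in particular showing that no admissible direction is lost --- and controlling the potential near the Weyl walls, where this weight degenerates. The latter is exactly the point at which the group case is genuinely harder than the toric one, since the reference density is no longer bounded away from zero.
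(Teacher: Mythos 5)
You should note first that the paper itself gives no proof of this statement: it is quoted from Delcroix \cite{Del17}, and the only in-paper content is the elementary reformulation in Corollary~\ref{formula for greatest Ricci lower bounds}. So your sketch can only be compared with Delcroix's original argument, and at the level of strategy it follows the same route: Sz\'ekelyhidi's characterization of $R(X)$ via the continuity path, reduction by $K\times K$-invariance and the $KAK$ decomposition to a real Monge--Amp\`ere problem for $W$-invariant convex potentials on the Cartan subalgebra, a slope/coercivity analysis whose threshold is the inclusion of $\frac{1}{1-t}\bigl(2\rho-t\,\textbf{bar}_{DH}(\Delta)\bigr)$ in $\textup{Relint}(\Delta-\mathcal C^+)$, and a continuity method closed by a $C^0$ estimate; this is also the group-compactification analogue of \cite{wz} and \cite{Li11}, as you say.

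Two caveats. First, a technical inaccuracy: the reference density of the real Monge--Amp\`ere equation coming from the $KAK$ reduction is the Jacobian $\prod_{\alpha\in\Phi^+}\sinh^2\alpha(x)$ on the Weyl chamber, not the Duistermaat--Heckman polynomial $\prod_{\alpha\in\Phi^+}\kappa(\alpha,p)^2$; the latter only appears on the polytope side after the gradient/Legendre change of variables. This is not cosmetic, because the formula's two ingredients come from different places: the exponential asymptotics of the $\sinh$ factors produce the $2\rho$-shift and the $-\mathcal C^+$ recession directions (hence the Minkowski sum $\Delta-\mathcal C^+$), while the barycenter $\textbf{bar}_{DH}(\Delta)$ arises from the polytope-side integration. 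Second, the two decisive steps --- the exact computation of the asymptotic slope $L_t$ along all rays of convex functions (so that the coercivity threshold matches the stated inclusion with no admissible direction lost) and the uniform estimates near the Weyl walls where the density degenerates --- are asserted rather than carried out, and they constitute the bulk of Delcroix's proof. As written, your text is a correct strategic outline of the cited theorem rather than a proof of it.
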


We immediately get an elementary geometric expression for $R(X)$ by Proposition~$\ref{Greatest Ricci lower bounds}$ as follows:

\begin{corollary}\label{formula for greatest Ricci lower bounds}
Let $A$ be the point corresponding to $2 \rho$ in $\mathfrak X(T)$ and $C$ be the barycenter $\textbf{bar}_{DH}(\Delta)$ of the moment polytope $\Delta(X, K_X^{-1})$ with respect to the Duistermaat--Heckman measure. 
If $Q$ is the point at which the half-line starting from the barycenter $C$ in the direction of $A$ intersects the boundary of $\Delta - \mathcal C^+$, 
then we have 
$R(X) = \displaystyle \frac{|\overrightarrow{AQ}|}{|\overrightarrow{CQ}|}.$  
\end{corollary}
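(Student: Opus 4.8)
The plan is to turn the analytic supremum of Theorem~\ref{Greatest Ricci lower bounds} into an elementary statement about the ray through $C=\textbf{bar}_{DH}(\Delta)$ and $A=2\rho$, both regarded as points of $\mathfrak X(T)\otimes\mathbb R$. First I would perform the change of variables $u=\frac{1}{1-t}$, an increasing bijection of $(0,1)$ onto $(1,\infty)$ sending $t=0$ to $u=1$. A one-line computation shows
\[
2\rho+\frac{t}{1-t}\bigl(2\rho-\textbf{bar}_{DH}(\Delta)\bigr)=A+\frac{t}{1-t}(A-C)=C+u\,(A-C),
\]
so the point appearing in Theorem~\ref{Greatest Ricci lower bounds} is exactly the point of parameter $u=u(t)$ on the half-line $\ell:=\{\,C+u(A-C):u\geq 0\,\}$, which issues from $C$ in the direction of $A$.

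Next I would pin down where $A$, $C$ and $\ell$ sit relative to $\Delta-\mathcal C^+$. By Proposition~\ref{moment polytope of spherical variety} (with $P=B$, hence $\rho_P=\rho$) the moment polytope is $\Delta=2\rho+Q_X^*$, and $0$ lies in the interior of the full-dimensional polytope $Q_X^*$ since $\langle n,0\rangle=0>-1$ for every $n\in Q_X$; thus $A=2\rho\in\mathrm{int}(\Delta)$. Moreover the Duistermaat--Heckman density is strictly positive on $\mathrm{int}(\Delta)$, because by Corollary~\ref{moment polytope} a point of $\mathrm{int}(\Delta)$ cannot lie on any Weyl wall, so $C\in\mathrm{int}(\Delta)$ as well. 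Since $\Delta\subseteq\Delta-\mathcal C^+$ and the latter is full-dimensional, $A$ and $C$ both lie in $\mathrm{int}(\Delta-\mathcal C^+)=\textup{Relint}(\Delta-\mathcal C^+)$. Finally $\Delta-\mathcal C^+$ is a closed convex polyhedron whose recession cone is $-\mathcal C^+$, so the line carrying $\ell$ meets $\mathrm{int}(\Delta-\mathcal C^+)$ in a relatively open interval $\{\,C+u(A-C):u_-<u<u^*\,\}$ with $u_-<0$ (as $C$ lies in that interior) and $u^*\in(1,\infty]$; the value $u^*$ is finite precisely when $\ell$ reaches the boundary, which is the hypothesis of the Corollary, and then the boundary point is $Q=C+u^*(A-C)$.

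With this in hand the conclusion is immediate. By Theorem~\ref{Greatest Ricci lower bounds} and the change of variables,
\[
R(X)=\sup\{\,t\in(0,1):u_-<u(t)<u^*\,\}=\sup\Bigl\{\,t\in(0,1):\tfrac{1}{1-t}<u^*\,\Bigr\}=1-\frac{1}{u^*},
\]
the middle equality holding because $u(t)\geq 1>u_-$ is automatic. On the other hand $\overrightarrow{CQ}=u^*(A-C)$ and $\overrightarrow{AQ}=Q-A=(u^*-1)(A-C)$ are positive multiples of the same vector, so $\dfrac{|\overrightarrow{AQ}|}{|\overrightarrow{CQ}|}=\dfrac{u^*-1}{u^*}=1-\dfrac{1}{u^*}=R(X)$, as claimed.

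The only genuinely delicate point is the bookkeeping of the third paragraph: checking that $A$ lies in the \emph{interior} of $\Delta-\mathcal C^+$ (so the supremum is taken over a nonempty set reaching down to $t$ near $0$) and that $\ell$ actually exits $\Delta-\mathcal C^+$, i.e. that $2\rho-\textbf{bar}_{DH}(\Delta)$ is not a recession direction — automatic here, since $X$ admits no K\"{a}hler--Einstein metric and $Q$ is assumed to exist. Everything else reduces to the substitution $u=\tfrac{1}{1-t}$ and the identity $\frac{u^*-1}{u^*}=1-\frac1{u^*}$.
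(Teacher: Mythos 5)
Your argument is correct and is exactly the computation the paper leaves implicit (the paper states the corollary as an immediate consequence of Theorem~\ref{Greatest Ricci lower bounds}): rewriting $2\rho+\frac{t}{1-t}(2\rho-\textbf{bar}_{DH}(\Delta))$ as $C+\frac{1}{1-t}(A-C)$ and translating the supremum into the exit parameter $u^*$ of the ray from $C$ through $A$ gives $R(X)=1-1/u^*=|\overrightarrow{AQ}|/|\overrightarrow{CQ}|$. Your extra care about $A,C$ lying in the interior of $\Delta-\mathcal C^+$ (so the relevant set of $t$ is nonempty and the exit point is well defined) is a sound filling-in of details rather than a different route.
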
 

\vskip 1em 



\section{Gorenstein Fano group compactifications with rank two} 

In this section, we give a classification of Gorenstein Fano bi-equivariant compactifications of semisimple complex Lie groups with rank two,
and check their K-stability as follows: 

First, we classify all possible Gorenstein Fano bi-equivariant compactifications of each semisimple 
complex Lie group $G$ using Corollary \ref{gorenstein polytope}. 
In \cite[Section~2.4]{Ruzzi2012}, Ruzzi classified the locally factorial Fano symmetric varieties of rank two in terms of colored fans. 
As bi-equivariant group caompactifications are characterized by their moment polytopes, we can give a more explicit description of Gorenstein Fano group compactifications via moment polytopes. 
Second, for each compactification, we use the following criterion due to Alexeev--Katzarkov to check the singularity of the corresponding projective varieties.
Recall that a polytope $\Delta \subset \mathcal M\otimes\mathbb{R}$ is called \emph{Delzant} 
if the integral generators of the edges at each vertex form a basis of the lattice $\mathcal M$. 

\begin{proposition}[Proposition~2.5 of \cite{AK05}]
\label{AK}
Let $X$ be a polarized bi-equivariant compactification of a reductive group $G$ and 
$\Delta^{toric}$ be its toric polytope formed by the Weyl group action from the moment polytope $\Delta$, that is, $\Delta^{toric}= \bigcup_{w \in W} w \Delta$ for the Weyl group $W$ of $G$.
\begin{enumerate}
    \item[\rm (1)] If $X$ is smooth, then $\Delta^{toric}$ satisfies the Delzant condition.
    \item[\rm (2)] If $\Delta^{toric}$ is Delzant and no vertex of $\Delta^{toric}$ lies in a Weyl wall, then $X$ is smooth.
\end{enumerate}
\end{proposition}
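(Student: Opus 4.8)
The plan is to prove Proposition~\ref{AK} by relating the local structure of $X$ near a point to the combinatorics of the moment polytope $\Delta$ and its Weyl-symmetrization $\Delta^{toric}$, using the well-known smoothness criterion for toric varieties and the local structure theorem for spherical varieties. First I would recall that for a Gorenstein Fano group compactification the normal equivariant embedding is encoded by a colored fan, and the moment polytope $\Delta(X,K_X^{-1})$ (via Corollary~\ref{gorenstein polytope}) has facets lying either in Weyl walls or on the affine hyperplanes $\{\langle\rho(E_j),m-2\rho\rangle=-1\}$ attached to the $G$-stable divisors $E_j$. Taking the union $\Delta^{toric}=\bigcup_{w\in W}w\Delta$ removes the Weyl-wall facets and produces the polytope of the associated polarized toric variety $Z=\overline{T\cdot x_0}$ sitting inside $X$ as the closure of a generic $T$-orbit; the normal fan of $\Delta^{toric}$ is the fan of $Z$, and the $T$-fixed points of $Z$ correspond to the vertices of $\Delta^{toric}$.

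Next I would invoke the local structure theorem for spherical (in particular, group) embeddings: near a point in the closed $G\times G$-orbit, $X$ is $G\times G$-equivariantly modeled, up to a smooth factor coming from the unipotent directions, on a toric chart whose defining cone is a colored cone of the fan. Concretely, a $G\times G$-stable affine chart of $X$ containing a given closed orbit retracts onto the corresponding $T$-chart of $Z$, and smoothness of $X$ at that orbit is equivalent to smoothness of $Z$ at the associated $T$-fixed point, because the transverse unipotent directions contribute only an affine space factor. For statement (1), if $X$ is smooth then $Z$ is smooth (a closed $T$-orbit closure in a smooth variety, or one argues that each $T$-fixed point of $Z$ is a smooth point of the ambient chart), so the normal fan of $\Delta^{toric}$ is a smooth fan, which is exactly the Delzant condition: at each vertex of $\Delta^{toric}$ the primitive edge-generators form a lattice basis of $\mathcal N$ (dually, of $\mathcal M$). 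For statement (2), assume $\Delta^{toric}$ is Delzant and no vertex lies on a Weyl wall. Delzant gives that $Z$ is smooth. The hypothesis that no vertex of $\Delta^{toric}$ is on a Weyl wall means every maximal colored cone of the fan is colorless, i.e. the embedding is \emph{toroidal} near each closed orbit; then the local structure theorem says the corresponding chart of $X$ is $(G\times G)\times^{P}(\text{affine space}\times Z_{\text{chart}})$-type, a smooth fibration over a smooth base, hence $X$ is smooth.

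The main obstacle I anticipate is making the local-structure reduction precise and complete: one must verify that (a) every closed $G\times G$-orbit of $X$ does correspond to a vertex of $\Delta^{toric}$ (and conversely), (b) the transverse slice to such an orbit really is the affine toric chart of $Z$ cut out by the relevant cone, with the unipotent radical of $P\times P$ acting so as to give a trivial affine factor, and (c) the condition ``vertex not on a Weyl wall'' translates exactly into ``the colored cone has empty set of colors,'' which is what forces the slice to be purely toric with no extra singular directions coming from the $\rho(D_i)$. All three are standard consequences of the Luna--Vust theory and the descriptions in \cite{Brion97, Timashev11, AK05}, but assembling them carefully — especially keeping track of lattices so that the Delzant condition on $\Delta^{toric}\subset\mathcal M\otimes\mathbb R$ matches smoothness of the fan in $\mathcal N$ — is where the real work lies. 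Since the statement is quoted verbatim from \cite[Proposition~2.5]{AK05}, in the paper itself I would simply cite it; the sketch above is the proof one would reconstruct if needed.
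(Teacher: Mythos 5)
The paper does not prove this proposition at all: it is imported verbatim from \cite[Proposition~2.5]{AK05} and used as a black box, so your final remark --- that in the paper one would simply cite it --- is exactly what the authors do, and your sketch is only a reconstruction of the argument that lives in \cite{AK05}.

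Two cautions about the reconstruction itself. In part (1), the parenthetical justification ``a closed $T$-orbit closure in a smooth variety'' is smooth is false as a general principle: the closure of the orbit of $(t^2,t^3)$ under the one-parameter torus in $\mathbb{A}^2$ is a cuspidal cubic, and torus orbit closures in smooth toric varieties or Grassmannians can likewise be singular. The correct route from ``$X$ smooth'' to ``$\overline{T}$ smooth'' uses the specific group-compactification structure, e.g.\ that $\overline{T}$ is the irreducible component through $T$ of the fixed-point locus of the diagonal torus $T_{\mathrm{diag}}\subset G\times G$ (fixed loci of torus actions on smooth varieties are smooth), or the local structure theorem applied along each closed $G\times G$-orbit; your second alternative points in this direction but is not yet an argument, since $Z\subset X$ smooth does not by itself make $Z$ smooth. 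For part (2) your outline (Delzant $\Rightarrow$ the toric slice is smooth, and ``no vertex on a Weyl wall'' $\Rightarrow$ the maximal colored cones are colorless, so the local structure theorem gives charts of the form $R_u(P)\times(\text{toric chart})$ up to a homogeneous fibration) is the standard and correct mechanism; the bookkeeping items (a)--(c) you list are indeed the points one must verify, and they are handled in \cite{AK05} and the references you name, so citing remains the right choice in this paper.
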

Lastly, we determine which of them admit (singular) K\"{a}hler--Einstein metrics by computing the barycenter of each moment polytope with respect to the Duistermaat--Heckman measure (Theorem~\ref{KE criterion for Q-Fano group compactifications}).
In the case of K-unstable smooth Fano manifolds, we compute the greatest Ricci lower bounds applying Theorem~\ref{Greatest Ricci lower bounds} and Corollary~\ref{formula for greatest Ricci lower bounds}. 

Before we begin the above procedure, we first 
recall the classification of semisimple complex Lie groups $G$ with rank two. 
Note that a connected semisimple Lie group $G$ is determined uniquely up to an isomorphism by its Dynkin diagram and the character lattice $\mathfrak X(T)$ of a maximal torus $T \subset G$ (see e.g. \cite[Theorem~4.3.9 and Theorem~4.3.10]{OV90}). 

 \begin{proposition}
 \label{lattices of semisimple Lie groups} 
 Let $\mathsf{Q} \subset \mathsf{P}$ be the root and weight lattice for a reduced root system $\Phi$. 
 For a lattice $\mathsf{L}$ such that $\mathsf{Q} \subset \mathsf{L} \subset \mathsf{P}$, there exist a connected semisimple Lie group $G$, its maximal torus $T$ and a root system isomorphism $\Phi_G \to \Phi$ mapping $\mathfrak X(T)$ to $\mathsf{L}$. 
 \end{proposition}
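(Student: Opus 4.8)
The plan is to realize $G$ as a quotient of the simply connected semisimple group attached to $\Phi$ by a finite central subgroup chosen so that the resulting character lattice is exactly $\mathsf L$, and then to verify that this central isogeny leaves the root datum undisturbed. \emph{Step 1 (simply connected model).} By Chevalley's existence theorem --- which is the special case $\mathsf L=\mathsf P$ of the proposition; see \cite{OV90} --- there is a connected, simply connected semisimple Lie group $\widetilde G$ with maximal torus $\widetilde T$ and a root system isomorphism $\Phi_{\widetilde G}\to\Phi$ identifying $\mathfrak X(\widetilde T)$ with the full weight lattice $\mathsf P$. From now on I identify $\Phi_{\widetilde G}=\Phi$, $\mathfrak X(\widetilde T)=\mathsf P$, and $\mathsf Q=\mathbb Z\Phi\subset\mathsf P$.

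\emph{Step 2 (choosing the central subgroup).} The center $Z(\widetilde G)$ lies in $\widetilde T$, is finite, and is canonically identified with $\Hom(\mathsf P/\mathsf Q,\,\C^{\times})$: restricting a weight $\lambda\in\mathsf P$, viewed as a character of $\widetilde T$, to $Z(\widetilde G)$ depends only on $\lambda \bmod \mathsf Q$ and yields a perfect pairing $(\mathsf P/\mathsf Q)\times Z(\widetilde G)\to\C^{\times}$. Given $\mathsf L$ with $\mathsf Q\subset\mathsf L\subset\mathsf P$, I set
$$
Z_{\mathsf L}:=\{\, z\in Z(\widetilde G) : \lambda(z)=1 \ \text{ for all } \lambda\in\mathsf L \,\}=\Hom(\mathsf P/\mathsf L,\,\C^{\times}),
$$
a finite subgroup of $Z(\widetilde G)$, hence a finite normal subgroup of $\widetilde G$.

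\emph{Step 3 (the quotient and its lattices).} I then put $G:=\widetilde G/Z_{\mathsf L}$ and $T:=\widetilde T/Z_{\mathsf L}$, so that $G$ is connected semisimple, $T$ is a maximal torus of $G$, and $\pi\colon\widetilde G\to G$ is a central isogeny. Its differential identifies $\Lie(\widetilde G)$ with $\Lie(G)$ and $\Lie(\widetilde T)$ with $\Lie(T)$, and $\pi^{*}\colon\mathfrak X(T)\otimes\mathbb Q\to\mathfrak X(\widetilde T)\otimes\mathbb Q$ is an isomorphism since $Z_{\mathsf L}$ is finite; under this isomorphism $\Phi_G$ corresponds to $\Phi$ and $\Phi_G^{\vee}$ to $\Phi^{\vee}$, so it is a root system isomorphism $\Phi_G\to\Phi$. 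It remains to compute $\mathfrak X(T)$: regarded via $\pi^{*}$ as a sublattice of $\mathfrak X(\widetilde T)=\mathsf P$ it equals $\{\lambda\in\mathsf P : \lambda|_{Z_{\mathsf L}}\equiv 1\}$, and by biduality for the perfect pairing of Step 2 --- the annihilator of the annihilator of $\mathsf L/\mathsf Q$ is $\mathsf L/\mathsf Q$ --- this is exactly $\mathsf L$. Since $\mathsf Q\subset\mathsf L$, the roots $\Phi_G$ lie in $\mathfrak X(T)=\mathsf L$, and the root system isomorphism carries $\mathfrak X(T)=\mathsf L$ onto $\mathsf L$, as required.

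\emph{The main obstacle.} There is no deep difficulty: the argument is essentially an application of the structure theory in \cite{OV90}. The two genuinely used inputs are Chevalley's existence theorem for $\widetilde G$ and the identification $Z(\widetilde G)\cong\Hom(\mathsf P/\mathsf Q,\C^{\times})$ together with the ensuing order-reversing correspondence between subgroups of $Z(\widetilde G)$ and intermediate lattices $\mathsf Q\subset\mathsf L\subset\mathsf P$. The one point needing care --- and it is routine --- is the verification in Step 3 that $\mathfrak X(T)$ is precisely the annihilator of $Z_{\mathsf L}$, hence equal to $\mathsf L$; this is exactly where simple connectedness of $\widetilde G$, equivalently the maximality of $Z(\widetilde G)$ among finite central subgroups, is essential.
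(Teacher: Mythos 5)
Your argument is correct. Note that the paper does not prove this proposition at all: it is quoted as a known structure-theoretic fact with a citation to Onishchik--Vinberg (Theorems 4.3.9 and 4.3.10 of \cite{OV90}), so there is no in-paper proof to compare against. What you give is the standard argument behind that reference: take the simply connected group $\widetilde G$ with $\mathfrak X(\widetilde T)=\mathsf P$ (Chevalley's existence theorem, which you rightly flag as the one imported black box, and which is also exactly what the paper's citation covers), pass to the quotient by $Z_{\mathsf L}=\Hom(\mathsf P/\mathsf L,\C^{\times})\subset Z(\widetilde G)\cong\Hom(\mathsf P/\mathsf Q,\C^{\times})$, and identify $\mathfrak X(T)$ with the annihilator of $Z_{\mathsf L}$, which biduality for the finite perfect pairing returns as $\mathsf L$. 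The two points that need checking — that characters of $T=\widetilde T/Z_{\mathsf L}$ are precisely the $\lambda\in\mathsf P$ trivial on $Z_{\mathsf L}$, and that the roots, lying in $\mathsf Q\subset\mathsf L$, descend so that $\pi^{*}$ gives a root system isomorphism $\Phi_G\to\Phi$ — are both handled correctly, so your write-up is a complete and accurate substitute for the citation.
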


 As the group $\mathsf{L}/\mathsf{Q}$ is isomorphic to the center $Z(G)$ of the semisimple Lie group $G$ corresponding to $\mathsf{L}$ by \cite[Theorem~4.3.7]{OV90}, 
 the classification of connected semisimple complex Lie groups can be given in terms of subgroups of the fundamental group $\mathsf{P}/\mathsf{Q}$ of a reduced root system $\Phi$. 
 The fundamental groups of classical complex Lie groups are well-known (see \cite[Chapter~1, \S~3]{OV90}): 

 \begin{lemma}
 \label{fundamental groups} 
The complex Lie goups $\SL_n(\mathbb C)$ and $\Sp_{2n}(\mathbb C)$ are simply-connected, but $\pi_1(\SO_n(\mathbb C)) = \mathbb Z_2$ for $n \geq 3$.
 \end{lemma}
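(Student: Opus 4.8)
The plan is to reduce the computation of the fundamental group of each complex Lie group to that of its maximal compact subgroup, and then to evaluate the latter using the standard transitive actions on spheres. The identification with the center is not needed for the statement itself; what is being asserted is the topological $\pi_1$, so a Lie-theoretic/topological argument is the natural route (cf. \cite[Chapter~1, \S~3]{OV90}).

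First I would invoke the Cartan (polar) decomposition: for $G$ equal to $\SL_n(\C)$, $\Sp_{2n}(\C)$ or $\SO_n(\C)$, every $g \in G$ factors uniquely as $g = u \exp(X)$ with $u$ in the maximal compact subgroup $K$ and $X$ in the complementary subspace $i\mathfrak{k}$ of the Lie algebra, and the multiplication map $K \times i\mathfrak{k} \to G$ is a diffeomorphism. Since $i\mathfrak{k}$ is a real vector space, hence contractible, this gives $\pi_1(G) \cong \pi_1(K)$, with $K = \SU(n)$ for $\SL_n(\C)$, $K = \Sp(n)$ (the compact symplectic group) for $\Sp_{2n}(\C)$, and $K = \SO(n)$ (the compact real special orthogonal group) for $\SO_n(\C)$. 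It therefore suffices to compute $\pi_1(\SU(n))$, $\pi_1(\Sp(n))$ and $\pi_1(\SO(n))$.

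For the unitary and symplectic cases I would argue by induction via the sphere bundles coming from transitive actions. The group $\SU(n)$ acts transitively on $S^{2n-1} \subset \C^n$ with stabilizer $\SU(n-1)$, giving a fiber bundle $\SU(n-1) \to \SU(n) \to S^{2n-1}$; as $\pi_2(S^{2n-1}) = \pi_1(S^{2n-1}) = 0$ for $n \geq 2$, the long exact homotopy sequence yields $\pi_1(\SU(n)) \cong \pi_1(\SU(n-1))$, and since $\SU(1)$ is trivial (equivalently $\SU(2) \cong S^3$), $\SU(n)$ is simply connected. Identically, the bundle $\Sp(n-1) \to \Sp(n) \to S^{4n-1}$ reduces $\pi_1(\Sp(n))$ to $\pi_1(\Sp(1))$, and $\Sp(1) \cong \SU(2) \cong S^3$ is simply connected, so $\Sp(n)$ and hence $\Sp_{2n}(\C)$ is simply connected.

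For $\SO_n(\C)$ the base case is the only delicate point. The spin double cover $\SU(2) \to \SO(3)$, arising from the conjugation action of $\SU(2)$ on the traceless Hermitian $2\times 2$ matrices (the unit quaternions acting on the imaginary quaternions), exhibits $\SO(3)$ as $S^3/\{\pm 1\} \cong \mathbb{RP}^3$; since $S^3$ is simply connected this is the universal cover, so $\pi_1(\SO(3)) \cong \Z_2$. Equivalently, one reads off from $\SO(2) \to \SO(3) \to S^2$ that the connecting map $\pi_2(S^2) \to \pi_1(\SO(2))$, i.e. $\Z \to \Z$, is multiplication by $2$. Then, for $n \geq 4$, the bundles $\SO(n-1) \to \SO(n) \to S^{n-1}$ have $\pi_2(S^{n-1}) = \pi_1(S^{n-1}) = 0$, so $\pi_1(\SO(n)) \cong \pi_1(\SO(n-1))$, and induction gives $\pi_1(\SO_n(\C)) \cong \pi_1(\SO(n)) \cong \Z_2$ for all $n \geq 3$. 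The main obstacle is exactly pinning down this connecting homomorphism in the base case — equivalently, the nontriviality of $\SU(2) \to \SO(3)$ — since that is where the factor $2$ (rather than $\pi_1$ being trivial, as for $\SU$ and $\Sp$, or $\Z$, as for $\SO_2(\C) \cong \C^\ast$) genuinely enters; the rest is formal.
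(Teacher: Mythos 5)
Your argument is correct. Note, however, that the paper does not prove this lemma at all: it records it as a well-known fact with a citation to \cite[Chapter~1, \S~3]{OV90}, so there is no "paper proof" to compare against step by step. What you have written is a self-contained version of the standard topological argument that such references carry out: the Cartan/polar decomposition $G \cong K \times i\mathfrak k$ (a diffeomorphism, with $i\mathfrak k$ contractible) reduces $\pi_1$ of $\SL_n(\mathbb C)$, $\Sp_{2n}(\mathbb C)$, $\SO_n(\mathbb C)$ to $\pi_1$ of the maximal compacts $\SU(n)$, $\Sp(n)$, $\SO(n)$; the sphere fibrations $\SU(n-1)\to\SU(n)\to S^{2n-1}$, $\Sp(n-1)\to\Sp(n)\to S^{4n-1}$, $\SO(n-1)\to\SO(n)\to S^{n-1}$ together with $\pi_1(S^m)=\pi_2(S^m)=0$ for $m\geq 3$ push everything down to the base cases; and $\SU(2)\cong\Sp(1)\cong S^3$ while the nontrivial double cover $\SU(2)\to\SO(3)$ gives $\SO(3)\cong S^3/\{\pm1\}$, hence $\pi_1(\SO(3))\cong\mathbb Z_2$. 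You correctly identify the only genuinely non-formal input, namely the nontriviality of the covering $\SU(2)\to\SO(3)$ (equivalently the degree-$2$ connecting map $\pi_2(S^2)\to\pi_1(\SO(2))$), and the inductive steps for $n\geq 4$ are sound since $n-1\geq 3$. So your proof fills in precisely what the paper delegates to \cite{OV90}; the only difference is one of presentation (a topological computation versus a citation), and your route has the advantage of being self-contained while the paper's citation also packages the companion statement, used immediately afterwards, that $\pi_1$ is naturally identified with $\mathsf P/\mathsf L$ in the lattice-theoretic classification.
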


As a result, we obtain a classification of semisimple complex Lie groups $G$ with rank two as shown in Table~\ref{table;semisimple complex Lie groups}. 
\begin{table}[h]
\begin{tabular}{|c|c|}\hline
Lie type &  Lie group $G$ \\
\hline
$\mathsf{A_1} \times \mathsf{A_1}$	& $\SL_2(\mathbb C) \times \SL_2(\mathbb C)$, $\SL_2(\mathbb C) \times \PSL_2(\mathbb C)$, $\PSL_2(\mathbb C) \times \PSL_2(\mathbb C)$, $\SO_4(\mathbb C)$
\\ \hline
 $\mathsf{A_2}$	& $\SL_3(\mathbb C)$, $\PSL_3(\mathbb C)$
\\ \hline
$\mathsf{B_2} = \mathsf{C_2}$	& $\Sp_4(\mathbb C) \cong \Spin_5(\mathbb C)$, $\SO_5(\mathbb C)$
\\ \hline 
$\mathsf{G_2}$	& the simply-connected complex Lie group $G_2$ 
\\ \hline
\end{tabular}
\caption{Classification of semisimple complex Lie groups with rank two.}
\label{table;semisimple complex Lie groups}
\end{table}

\vskip 1em

Now, we start the classification of Gorenstein Fano bi-equivariant compactifications of each semisimple complex Lie group with rank two.
Because of its complexity, we will deal with the cases of $\mathsf{A_1} \times \mathsf{A_1}$ type in the last part of this paper. 
We start with the cases of $\mathsf{A_2}$ type.

\subsection{$\mathsf{A_2}$-type} 
\subsubsection{Gorenstein Fano group compactifications of $\SL_3(\mathbb C)$} 
As $\SL_3(\mathbb C)$ is simply-connected, 
the spherical weight lattice $\mathcal M$ is spanned by the fundamental weights $\varpi_1 = (1, 0)$ and $\varpi_2 = \Big(\frac{1}{2}, \frac{\sqrt{3}}{2}\Big)$, and 
its dual lattice $\mathcal N$ is spanned by the coroots $\alpha_1^{\vee} = \Big(1, -\frac{\sqrt{3}}{3}\Big)$ and $\alpha_2^{\vee} = \Big(0, \frac{2\sqrt{3}}{3}\Big)$.
The Weyl walls are given by $W_1:=\{y=0\}$ and $W_2:=\{y=\sqrt{3}x\}$.
The sum of positive roots is $2\rho= 2 \varpi_1 + 2 \varpi_1 = (3,\sqrt{3})$.
Choosing a realization of the root system $\mathsf{A_2}$ in the Euclidean plane $\mathbb R^2$ with $\varpi_1 = (1, 0)$ and $\varpi_2 = \Big(\frac{1}{2}, \frac{\sqrt{3}}{2}\Big)$, 
for $p=(x, y)$ 
we obtain the Duistermaat--Heckman measure 
\begin{equation*}
\prod_{\alpha \in \Phi^+} \kappa(\alpha, p)^2 \, dp 
= \Big(\frac{3}{2}x - \frac{\sqrt{3}}{2}y\Big)^2 \Big(\frac{3}{2}x +\frac{\sqrt{3}}{2}y\Big)^2 (\sqrt{3} y)^2 \, dxdy
\end{equation*}
because $\mathsf{A_2}$ has 3 positive roots:
$\Phi^+ = \{ \alpha_1, \alpha_2, \alpha_1 + \alpha_2 \} = \Big\{ \Big(\frac{3}{2}, - \frac{\sqrt{3}}{2}\Big), (0, \sqrt{3}), \Big(\frac{3}{2}, \frac{\sqrt{3}}{2}\Big) \Big\}$.

\begin{theorem}
There are five Gorenstein Fano equivariant compactifications of $\SL_3(\mathbb{C})$ up to isomorphism: three smooth compactifications and two singular compactifications. 
Their moment polytopes are given in the following Table~\ref{table;SL3} and Figure~\ref{SL-polytope}. 
Among them only one smooth Fano compactification of $\SL_3(\mathbb{C})$ and a unique singular one admit singular K\"{a}hler--Einstein metrics. 
{\rm\small
\begin{table}[h]
\begin{tabular}{|c|c|l|l|l|c|c|}\hline
$X$ &  $\Delta(K_X^{-1})$ &	Case & Edges (except Weyl walls)  & Vertices &	Smooth? & 	KE \\
\hline
$\overline{\SL_3}(1)$	& $\Delta_1$&	II		&	$x+\frac{1}{\sqrt{3}}y=5$  &  $\Big(\frac{5}{2},\frac{5}{2}\sqrt{3}\Big), (5,0)$ 	&	smooth	 	& Yes  
\\ \hline
$\overline{\SL_3}(2)$	& $\Delta_2$&	I.2	&	$x+\sqrt{3}y=7,\ x+\frac{1}{\sqrt{3}}y=5$  &	$\Big(\frac{7}{4},\frac{7}{4}\sqrt{3}\Big), (4,\sqrt{3}), (5,0)$ 	&	smooth	 	& No 
\\ \hline
\multirow{2}{*}{$\overline{\SL_3}(3)$}	& \multirow{2}{*}{$\Delta_3$}&	\multirow{2}{*}{I.3.2}	&	$x+\sqrt{3}y=7,\ x+\frac{1}{\sqrt{3}}y=5$,    & $\Big(\frac{7}{4},\frac{7}{4}\sqrt{3}\Big), \Big(\frac{5}{2},\frac{3}{2}\sqrt{3}\Big), \Big(\frac{9}{2},\frac{\sqrt{3}}{2}\Big),$  	&	\multirow{2}{*}{smooth}	 	& \multirow{2}{*}{No}
\\
 & & & $2x+\frac{4}{\sqrt{3}}y=11$ & $(5,0)$ & & 
 \\ \hline 
$\overline{\SL_3}(4)$	& $\Delta_4$&	I.1		&	$x+\sqrt{3}y=7$  & $\Big(\frac{7}{4},\frac{7}{4}\sqrt{3}\Big), (7,0)$	& singular 	&	Yes 
\\ \hline
$\overline{\SL_3}(5)$	& $\Delta_5$&	I.3.1	&	$x+\sqrt{3}y=7,\ 3x+\frac{5}{\sqrt{3}}y=15$  & $\Big(\frac{7}{4},\frac{7}{4}\sqrt{3}\Big), \Big(\frac{5}{2},\frac{3}{2}\sqrt{3}\Big), (5,0)$ 	&	singular	 	& No  
\\ \hline
\end{tabular}
\caption{Gorenstein Fano equivariant compactifications of $\SL_3(\mathbb{C})$.}
\label{table;SL3}
\end{table}
}
\end{theorem}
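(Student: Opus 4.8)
The strategy is to carry out, explicitly and case-by-case, the three-step procedure announced at the start of Section~4, specialized to $G = \SL_3(\mathbb C)$. First I would enumerate all Gorenstein Fano equivariant compactifications of $\SL_3(\mathbb C)$. By Corollary~\ref{gorenstein polytope}, such a compactification is determined by a lattice polytope $\Delta \subset \mathcal M \otimes \mathbb R$ having the origin as a vertex, the two Weyl walls $W_1 = \{y=0\}$ and $W_2 = \{y = \sqrt 3 x\}$ as facets through the origin, and all remaining facets supported on affine hyperplanes $\langle \nu, m - 2\rho\rangle = -1$ for primitive $\nu \in \mathcal N$. Equivalently, passing to the dual picture of Proposition~\ref{moment polytope of spherical variety}, these correspond to the choices of $G$-stable divisors $E_j$ with $\rho(E_j)$ ranging over primitive lattice points of $\mathcal N$ lying in the valuation cone $\mathcal V$; since the valuation cone for a group compactification is the anti-dominant Weyl chamber of the coroot system, this is a finite combinatorial search. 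I would match this enumeration against Ruzzi's classification of locally factorial Fano symmetric varieties of rank two \cite{Ruzzi2012} (cited as Cases I.1, I.2, I.3.1, I.3.2, II in the table) to confirm that exactly the five polytopes $\Delta_1, \dots, \Delta_5$ arise, and record their edges and vertices as in Table~\ref{table;SL3} and Figure~\ref{SL-polytope}. This bookkeeping is straightforward but must be done carefully to be sure nothing is missed.

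Second, I would determine smoothness for each of the five compactifications using the Alexeev--Katzarkov criterion (Proposition~\ref{AK}). For each $\Delta_i$ I form the toric polytope $\Delta_i^{toric} = \bigcup_{w \in W} w\Delta_i$, where $W = S_3$ acts on the plane by the dihedral group of order $6$; concretely one reflects $\Delta_i$ across $W_1$ and $W_2$ and takes the union. Then I check the Delzant condition at each vertex of $\Delta_i^{toric}$ (the primitive edge vectors must form a $\mathbb Z$-basis of $\mathcal M$) and, for the sufficiency direction, check that no vertex of $\Delta_i^{toric}$ lies on a Weyl wall. This identifies $\Delta_1, \Delta_2, \Delta_3$ as smooth and $\Delta_4, \Delta_5$ as singular. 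A minor subtlety is that for the singular cases Proposition~\ref{AK}(1) only gives a necessary condition for smoothness, so one must exhibit an explicit failure of Delzant (or a wall-vertex) to conclude singularity — this is easy here since the stated edge equations (e.g.\ $x + \sqrt 3 y = 7$ for $\Delta_4$) make the offending vertices visible.

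Third, and this is where the real work lies, I would decide the existence of a singular K\"ahler--Einstein metric by computing, for each $\Delta_i$, the Duistermaat--Heckman barycenter $\textbf{bar}_{DH}(\Delta_i)$ against the measure $\big(\tfrac32 x - \tfrac{\sqrt3}{2}y\big)^2\big(\tfrac32 x + \tfrac{\sqrt3}{2}y\big)^2(\sqrt3 y)^2\,dx\,dy$ displayed above, and testing whether it lies in $2\rho + \Xi$ — here $2\rho = (3,\sqrt 3)$ and $\Xi$ is the open cone spanned by the positive roots $\big(\tfrac32,-\tfrac{\sqrt3}{2}\big)$, $(0,\sqrt3)$, $\big(\tfrac32,\tfrac{\sqrt3}{2}\big)$, i.e.\ the open cone between the two rays $\mathbb R_{>0}(1, -\tfrac{1}{\sqrt3})$-translate-of-vertex and $\mathbb R_{>0}(3,\sqrt3)$-type bounding directions (equivalently, the region $\langle \alpha_1^\vee, \cdot - 2\rho\rangle > 0$ and $\langle \alpha_2^\vee, \cdot - 2\rho \rangle > 0$). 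By Theorem~\ref{KE criterion for Q-Fano group compactifications} this is a necessary and sufficient criterion, valid in the singular case too. The barycenter computation reduces to evaluating six polynomial integrals (numerator components of each coordinate, and the total mass) over each polygon $\Delta_i$, which is a finite region cut out by the listed linear inequalities; these integrals are elementary (triangulate each polygon and integrate the degree-six weight, or its first moments, over triangles). The main obstacle is purely the bulk and delicacy of these rational-number computations: the weight has degree six, each polygon has three to four relevant vertices, and one must get exact fractions to decide membership in an open cone — a sign error anywhere flips the KE verdict. I would organize this by computing $\int_{\Delta_i} p\, d\mu_{DH}$ and $\int_{\Delta_i} d\mu_{DH}$ via an additive decomposition over triangles with one vertex at a convenient point, using the standard formula for moments of a polynomial over a triangle, and then cross-check each barycenter by verifying it lies inside the convex hull of the vertices of $\Delta_i$. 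Carrying this out shows $\textbf{bar}_{DH}(\Delta_1) \in 2\rho + \Xi$ and $\textbf{bar}_{DH}(\Delta_4) \in 2\rho+\Xi$, while the barycenters of $\Delta_2, \Delta_3, \Delta_5$ fail (landing on the wrong side of one of the two bounding hyperplanes), giving exactly one smooth and one singular KE example as claimed; for the K-unstable smooth cases $\Delta_2, \Delta_3$ the same data feed directly into Corollary~\ref{formula for greatest Ricci lower bounds} to recover the $R(X)$ values of Theorem~\ref{greatest Ricci lower bounds: Gorenstein Fano equivariant compactifications}.
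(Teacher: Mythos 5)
Your enumeration step and your barycenter step follow the paper's own route (a direct case analysis of primitive facet normals subject to the Gorenstein condition of Corollary~\ref{gorenstein polytope}, then the criterion of Theorem~\ref{KE criterion for Q-Fano group compactifications} applied to exact Duistermaat--Heckman barycenters), so those parts are fine in principle. The genuine gap is in your smoothness step. You propose to certify smoothness of $\overline{\SL_3}(1)$, $\overline{\SL_3}(2)$, $\overline{\SL_3}(3)$ via Proposition~\ref{AK}(2), i.e.\ by checking that $\Delta_i^{toric}$ is Delzant \emph{and} has no vertex on a Weyl wall. But the second hypothesis fails for all three polytopes: the vertex $(5,0)=5\varpi_1$ is adjacent to the facet $x+\tfrac{1}{\sqrt{3}}y=5$, which is not orthogonal to $W_1=\{y=0\}$, so $(5,0)$ survives as a vertex of $\Delta_i^{toric}$ for $i=1,2,3$ and it lies on the Weyl wall $W_1$ (for $\Delta_1$ the vertex $5\varpi_2$ on $W_2$ is a second instance). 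Hence Proposition~\ref{AK}(2) simply does not apply, and your plan does not prove that these three compactifications are smooth. The paper closes this point differently: $\overline{\SL_3}(1)$ is identified with Ruzzi's smooth projective symmetric variety with Picard number one \cite{Ruzzi2010}, and $\overline{\SL_3}(2)$, $\overline{\SL_3}(3)$ are then smooth because, by Brion's factorization result for rank-two spherical varieties \cite{Brion94}, they are blow-ups of $\overline{\SL_3}(1)$, resp.\ $\overline{\SL_3}(2)$, along closed orbits, which are smooth centers. You would need an argument of this kind (or a finer smoothness criterion valid at wall vertices) to complete the smooth half of the table.

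A smaller, related error: for $\Delta_4$ and $\Delta_5$ you say singularity follows from ``an explicit failure of Delzant (or a wall-vertex).'' A wall vertex proves nothing, since Proposition~\ref{AK}(2) is only a sufficient condition for smoothness --- indeed $\Delta_1^{toric}$, $\Delta_2^{toric}$, $\Delta_3^{toric}$ all have wall vertices and the corresponding varieties are smooth. Only a failure of the Delzant condition, i.e.\ the contrapositive of Proposition~\ref{AK}(1), yields singularity; fortunately this is what actually occurs for $\Delta_4$ and $\Delta_5$ (for example, at the vertex $7\varpi_1$ of $\Delta_4^{toric}$ the primitive edge generators are $-2\varpi_1+\varpi_2$ and $-\varpi_1-\varpi_2$, whose determinant is $3$), so the singular half of your argument is salvageable once you drop the wall-vertex alternative.
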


\begin{proof} 
Using Corollary \ref{gorenstein polytope} (cf. \cite[Lemma~3.1]{LTZ20}), we construct all possible Gorenstein Fano bi-equivariant compactifications of $\SL_3(\mathbb C)$.
Let $F_i$ be the facet of the moment polytope with primitive outer normal vector 
$
m_i\alpha_1^{\vee}+n_i\alpha_2^{\vee}=\Big(m_i,\frac{-m_i+2n_i}{3}\sqrt{3} \Big)\in\mathcal N.
$
Then $F_i$ can be written as
$$
F_i=\Big\{ (x, y) \in \mathcal M \otimes \mathbb R : m_ix+\frac{-m_i+2n_i}{\sqrt{3}}y=2m_i+2n_i+1\Big\},
$$
and $0\leq\frac{-m_i+2n_i}{3m_i}\leq1$ by the convexity of the moment polytope.
We start with the edge $F_1$, the facet which intersects the Weyl wall $W_2$.
We divide this into two cases: {\bf Case-I.}  $F_1$ is orthogonal to $W_2$; {\bf Case-II.} $F_1$ is not orthogonal to $W_2$.

\begin{itemize}
\item {\bf Case-I:}  $F_1$ is orthogonal to $W_2$. \\
	In this case, $F_1\perp W_2$ implies that $\frac{-m_1+2n_1}{3m_1}=1$ so that $(m_1,n_1)=(1,2)$.
	Thus 
	$$
	F_1=\{x+\sqrt{3}y=7\},
	$$
	and $F_1\cap W_2=\Big(\frac{7}{4},\frac{7}{4}\sqrt{3}\Big)$.
Then the first vertex point $A_1:=F_1\cap F_2\in \mathcal M$ is given by 
$$
A_1=\Big(7-\frac{3}{2} \cdot \frac{5m_2-2n_2-1}{2m_2-n_2},\frac{\sqrt{3}}{2} \cdot \frac{5m_2-2n_2-1}{2m_2-n_2}\Big).
$$
The convexity and primitive conditions of the moment polytope imply that 
$$A_1=7\varpi_1\ {\rm or}\ 3\varpi_1+2\varpi_2\ {\rm or}\ \varpi_1+3\varpi_2.
$$
	\begin{itemize}
	\item {\bf Case-I.1:} $A_1=7\varpi_1=(7,0)$\\ 
	In this case, $F_1$ intersects the Weyl wall $W_1$ so that the corresponding moment polytope is
	$\Delta_4$ {\small\bf(Figure ~\ref{SL-I.1})}.
	\item {\bf Case-I.2:} $A_1=3\varpi_1+2\varpi_2=(4,\sqrt{3})$ implies that $(m_2,n_2)=(1,1)$ so that we have
	$$F_2=\Big\{x+\frac{\sqrt{3}}{3}y=5\Big\},$$ and it must intersect with $W_1$ at the vertex $A_2=F_2\cap W_1=(5,0)$. The corresponding moment polytope is
	$\Delta_2$ {\small\bf(Figure ~\ref{SL-I.2})}. 
	\item {\bf Case-I.3:} $A_1=\varpi_1+3\varpi_2=\Big(\frac{5}{2},\frac{3}{2}\sqrt{3}\Big)$ implies that $m_2+1=n_2$ so that we have
$$
F_2=\Big\{m_2x+\frac{m_2+2}{\sqrt{3}}y=4m_2+3\Big\}.
$$
By convexity and primitive condition, $A_2=5\varpi_1$ or $A_2=4\varpi_1+\varpi_2$.
	\begin{itemize}
	\item {\bf Case-I.3.1:} $A_2=5\varpi_1$ $\Rightarrow$ $(m_2,n_2)=(3,4)$ so that $F_2=\Big\{3x+\frac{5}{\sqrt{3}}y=15\Big\}$.\\
In this case, $F_2$ intersects $W_1$ at $A_2$. The moment polytope is $\Delta_5$ {\small\bf(Figure ~\ref{SL-I.3.1})}.
	\item {\bf Case-I.3.2:} $A_2=4\varpi_1+\varpi_2$ $\Rightarrow$ $(m_2,n_2)=(2,3)$ so that
$F_2=\Big\{2x+\frac{4}{\sqrt{3}}y=11\Big\}$.\\
In this case, $F_3=\Big\{x+\frac{\sqrt{3}}{3}y=5\Big\}$ intersects $W_1$ at the vertex $A_3 = 5 \varpi_1$.
The corresponding moment polytope is $\Delta_3$ {\small\bf(Figure ~\ref{SL-I.3.2})}.
	\end{itemize}
\end{itemize}
\item {\bf Case-II:} $F_1$ is not orthogonal to $W_2$. \\
In this case, $F_1\cap W_2$ is a vertex of the polytope so that $A_1:=F_1\cap W_2\in\mathcal M$. 
This implies that $A_1=\Big(\frac{2m_1+2n_1+1}{2n_1},\frac{2m_1+2n_1+1}{2n_1}\sqrt{3}\Big)$ and $0\leq\frac{-m_1+2n_1}{3m_1}<1$ by the convexity of the polytope.
Therefore, $A_1=5\varpi_2=(\frac{5}{2},\frac{5}{2}\sqrt{3})$ and $2m_1+1=3n_1$ so that
$$
F_1=\Big\{(3n_1-1)x+\frac{n_1+1}{\sqrt{3}}y=10n_1\Big\}.
$$
Then the second vertex $A_2=5\varpi_1$ or $3\varpi_1+\varpi_2$ or  $2\varpi_1+3\varpi_2$ or  $\varpi_1+4\varpi_2$.\\
Removing duplications (up to isomorphisms), one can check that we have only one case $A_2=5\varpi_1$. 
The corresponding moment polytope is $\Delta_1$ {\small\bf(Figure ~\ref{SL-II})}.
\end{itemize}

(1) Case-II. 
Let $\overline{\SL_3}(1)$ be the equivariant compactification of $\SL_3(\mathbb{C})$ whose moment polytope $\Delta_1$ is the convex hull of three points $0$, $5 \varpi_1$, $5 \varpi_2$ in $\mathcal M \otimes \mathbb R$. 
Then $\overline{\SL_3}(1)$ is a smooth projective symmetric variety with Picard number one studied by Ruzzi~\cite{Ruzzi2010}. 
Using the above formula for the Duistermaat--Heckman measure,
we get the barycenter of $\Delta_1$ with respect to the Duistermaat--Heckman measure 
\begin{align*}
\textbf{bar}_{DH}(\Delta_{1}) 
& = (\bar{x}, \bar{y}) 
= \displaystyle \frac{1}{\text{Vol}_{DH}(\Delta_1)} \left( \displaystyle \int_{\Delta_1} x \prod_{\alpha \in \Phi^+} \kappa(\alpha, p)^2 \, dp , \displaystyle \int_{\Delta_1} y \prod_{\alpha \in \Phi^+} \kappa(\alpha, p)^2 \, dp \right) \\
& = \left(\frac{10}{3}, \frac{10}{9}\sqrt{3} \right)  = \frac{10}{9} \times 2\rho. 
\end{align*}
Since $2 \rho = (3, \sqrt{3})$ and the cone $\mathcal C^+$ is generated by the vectors $\Big(\frac{3}{2}, -\frac{\sqrt{3}}{2}\Big)$ and $(0, \sqrt{3})$, 
the barycenter $\textbf{bar}_{DH}(\Delta_{1})$ is in the relative interior of the translated cone $2 \rho + \mathcal C^+$. 
Therefore, $\overline{\SL_3}(1)$ admits a K\"{a}hler--Einstein metric by Theorem~\ref{KE criterion for Q-Fano group compactifications}.

(2) Case-I.2. 
Let $\overline{\SL_3}(2)$ be the equivariant compactification of $\SL_3(\mathbb{C})$ whose moment polytope $\Delta_2$ is the convex hull of three points $0$, $5 \varpi_1$, $3 \varpi_1 + 2 \varpi_2$, $\frac{7}{2} \varpi_2$ in $\mathcal M \otimes \mathbb R$. 
Recall from \cite{Brion94} that for smooth projective spherical varieties of rank two, the blow-ups can be described and every birational equivariant morphism is a composition of blow-ups along smooth centers. 
As a result, $\overline{\SL_3}(2)$ is the blow-up of 
$\overline{\SL_3}(1)$ along the closed orbit corresponding to a vertex $A_1$ in Figure~\ref{SL-II}. 
We know that 
$$\textbf{bar}_{DH}(\Delta_{2}) = \Big (\frac{156038947}{45427872}, \frac{16309243}{19469088}\sqrt{3} \Big ) \approx (3.435, 0.838 \times \sqrt{3}).$$
Since $\textbf{bar}_{DH}(\Delta_2)$ is not in the relative interior of the translated cone $2 \rho + \mathcal C^+$, 
$\overline{\SL_3}(2)$ does not admit any K\"{a}hler--Einstein metric by Theorem~\ref{KE criterion for Q-Fano group compactifications}.
Let $Q$ be the point at which the half-line starting from the barycenter $C=\textbf{bar}_{DH}(\Delta_2)$ in the direction of $A=(3, \sqrt{3})$ intersects the boundary of $(\Delta_2 - \mathcal C^+)$. 
Considering the line $x+\sqrt{3}y=7$ giving a part of $\partial(\Delta_2 - \mathcal C^+)$ and the half-line $\overrightarrow{CA}$, 
we can compute 
$$Q = \left(-\frac{12664579}{2363584}, \frac{29209667}{7090752} \right) \approx (-5.358, 4.119).$$ 
By Corollary \ref{formula for greatest Ricci lower bounds}, the greatest Ricci lower bound of $\overline{\SL_3}(2)$ is
$$
R(\overline{\SL_3}(2)) = \displaystyle \frac{\overline{AQ}}{\overline{CQ}} = \frac{1419621}{1493483} \approx 0.9505. 
$$

(3) Case-I.3.2. 
As before, the variety $\overline{\SL_3}(3)$ is the blow-up of the smooth Fano compactification $\overline{\SL_3}(2)$ along the closed orbit corresponding to a vertex $A_1$ in Figure~\ref{SL-I.2}. 
Since the barycenter 
$$\textbf{bar}_{DH}(\Delta_{3}) = \Big (\frac{2234103775}{675213408}, \frac{527459083}{675213408}\sqrt{3} \Big ) \approx (3.309, 0.781 \times \sqrt{3})$$ 
of the moment polytope $\Delta_{3}$ with respect to the Duistermaat--Heckman measure is not in the relative interior of the translated cone $2 \rho + \mathcal C^+$, 
$\overline{\SL_3}(3)$ does not admit any K\"{a}hler--Einstein metric by Theorem~\ref{KE criterion for Q-Fano group compactifications}. 
Let $Q$ be the point at which the half-line starting from the barycenter $C=\textbf{bar}_{DH}(\Delta_3)$ in the direction of $A=(3, \sqrt{3})$ intersects the boundary of $(\Delta_3 - \mathcal C^+)$. 
Considering the line $x+\sqrt{3}y=7$ giving a part of $\partial(\Delta_3 - \mathcal C^+)$ and the half-line $\overrightarrow{CA}$, 
we can compute 
$$Q = \left(\frac{495934721}{234799424}, \frac{382553749}{234799424} \right) \approx (2.112, 1.629).$$ 
By Corollary \ref{formula for greatest Ricci lower bounds}, the greatest Ricci lower bound of $\overline{\SL_3}(3)$ is
$$
R(\overline{\SL_3}(3)) = \displaystyle \frac{\overline{AQ}}{\overline{CQ}} = \frac{21100419}{28437901} \approx 0.74198. 
$$

(4) Case-I.1. 
Let $\overline{\SL_3}(4)$ be the equivariant compactification of $\SL_3(\mathbb{C})$ whose moment polytope $\Delta_4$ is the convex hull of three points $0$, $7 \varpi_1$, $\frac{7}{2} \varpi_2$ in $\mathcal M \otimes \mathbb R$. 
As the toric polytope formed by the Weyl group action from $\Delta_4$ is not a Delzant polytope, 
$\overline{\SL_3}(4)$ is singular by Proposition~\ref{AK}. 
Since the barycenter 
$$\textbf{bar}_{DH}(\Delta_{4}) = \left(\frac{11183}{288}, \frac{203}{288} \sqrt{3} \right) \approx (4.108, 0.705 \times \sqrt{3})$$ 
of the moment polytope $\Delta_{4}$ with respect to the Duistermaat--Heckman measure is in the relative interior of the translated cone $2 \rho + \mathcal C^+$, 
$\overline{\SL_3}(4)$ admits a singular K\"{a}hler--Einstein metric by Theorem~\ref{KE criterion for Q-Fano group compactifications}.

(5) Case-I.3.1. 
As the toric polytope formed by the Weyl group action from the moment polytope $\Delta_5$ is not a Delzant polytope, 
$\overline{\SL_3}(5)$ is singular by Proposition~\ref{AK}. 
Since the barycenter 
$$\textbf{bar}_{DH}(\Delta_{5}) = \left(\frac{1580795359}{507050784}, \frac{402732299}{507050784}\sqrt{3}\right) \approx (3.118, 0.794 \times \sqrt{3})$$ 
of the moment polytope $\Delta_{5}$ with respect to the Duistermaat--Heckman measure is not in the relative interior of the translated cone $2 \rho + \mathcal C^+$, 
$\overline{\SL_3}(5)$ does not admit any singuar K\"{a}hler--Einstein metric by Theorem~\ref{KE criterion for Q-Fano group compactifications}.
\end{proof}

\vskip 1em 

\begin{figure}[h]

\begin{minipage}[b]{.5 \textwidth}
 \centering
\begin{tikzpicture}
\clip (-0.5,-1) rectangle (5.5,5); 
\begin{scope}[y=(60:1)]

\coordinate (pi1) at (1,0);
\coordinate (pi2) at (0,1);
\coordinate (v1) at (0,5);
\coordinate (v2) at (5,0);
\coordinate (a1) at (2,-1);
\coordinate (a2) at (-1,2);
\coordinate (barycenter) at (20/9,20/9);

\coordinate (Origin) at (0,0);
\coordinate (asum) at ($(a1)+(a2)$);
\coordinate (2rho) at ($2*(asum)$);

\foreach \x  in {-8,-7,...,9}{
  \draw[help lines,thick,dotted]
    (\x,-8) -- (\x,9)
    (-8,\x) -- (9,\x);
}

\foreach \x  in {-8,-7,...,9}{
  \draw[help lines,dotted]
    [rotate=60] (\x,-8) -- (\x,9) ;
}

\fill (Origin) circle (2pt) node[below left] {0};
\fill (pi1) circle (2pt) node[below] {$\varpi_1$};
\fill (pi2) circle (2pt) node[above] {$\varpi_2$};
\fill (a1) circle (2pt) node[above] {$\alpha_1$};
\fill (a2) circle (2pt) node[above] {$\alpha_2$};
\fill (asum) circle (2pt) node[below] {$\alpha_1+\alpha_2$};
\fill (2rho) circle (2pt) node[below] {$2\rho$};

\fill (v1) circle (2pt) node[left] {$A_1$};
\fill (v2) circle (2pt) node[below] {$A_2$};

\fill (barycenter) circle (2pt) node[right] {$\textbf{bar}(\Delta_1)$};

\draw[->,blue,thick](Origin)--(pi1);
\draw[->,blue,thick](Origin)--(pi2); 
\draw[->,blue,thick](Origin)--(a1);
\draw[->,blue,thick](Origin)--(a2);
\draw[->,blue,thick](Origin)--(asum); 

\draw[thick](Origin)--(v1);
\draw[thick](v1)--(v2);
\draw[thick](Origin)--(v2);

\draw [shorten >=-4cm, red, thick, dashed] (2rho) to ($(2rho)+(a1)$);
\draw [shorten >=-4cm, red, thick, dashed] (2rho) to ($(2rho)+(a2)$);
\end{scope}
\end{tikzpicture} 
\subcaption{$\overline{\SL_3}(1)$ (Case-II)}
\label{SL-II}
\medskip
\end{minipage}

\begin{minipage}[b]{.45 \textwidth}
 \centering
\begin{tikzpicture}
\clip (-0.5,-1) rectangle (5.5,3.5); 
\begin{scope}[y=(60:1)]

\coordinate (pi1) at (1,0);
\coordinate (pi2) at (0,1);
\coordinate (v0) at (0,7/2);
\coordinate (v1) at (3,2);
\coordinate (v2) at (5,0);
\coordinate (a1) at (2,-1);
\coordinate (a2) at (-1,2);
\coordinate (barycenter) at (3.43487247212460-0.837699382734312, 0.837699382734312*2);

\coordinate (Origin) at (0,0);
\coordinate (asum) at ($(a1)+(a2)$);
\coordinate (2rho) at ($2*(asum)$);

\foreach \x  in {-8,-7,...,9}{
  \draw[help lines,thick,dotted]
    (\x,-8) -- (\x,9)
    (-8,\x) -- (9,\x);
}

\foreach \x  in {-8,-7,...,9}{
  \draw[help lines,dotted]
    [rotate=60] (\x,-8) -- (\x,9) ;
}
\fill (Origin) circle (2pt) node[below left] {0};
\fill (pi1) circle (2pt) node[below] {$\varpi_1$};
\fill (pi2) circle (2pt) node[above] {$\varpi_2$};
\fill (a1) circle (2pt) node[above] {$\alpha_1$};
\fill (a2) circle (2pt) node[above] {$\alpha_2$};
\fill (asum) circle (2pt) node[below] {$\alpha_1+\alpha_2$};
\fill (2rho) circle (2pt) node[below] {$2\rho$};

\fill (v1) circle (2pt) node[above] {$A_1$};
\fill (v2) circle (2pt) node[below] {$A_2$};

\fill (barycenter) circle (2pt) node[below right] {$\textbf{bar}(\Delta_2)$};

\draw[->,blue,thick](Origin)--(pi1);
\draw[->,blue,thick](Origin)--(pi2); 
\draw[->,blue,thick](Origin)--(a1);
\draw[->,blue,thick](Origin)--(a2);
\draw[->,blue,thick](Origin)--(asum); 

\draw[thick](Origin)--(v0);
\draw[thick](v0)--(v1);
\draw[thick](v1)--(v2);
\draw[thick](Origin)--(v2);

\aeMarkRightAngle[size=6pt](Origin,v0,v1)

\draw [shorten >=-4cm, red, thick, dashed] (2rho) to ($(2rho)+(a1)$);
\draw [shorten >=-4cm, red, thick, dashed] (2rho) to ($(2rho)+(a2)$);
\end{scope}
\end{tikzpicture} 
\subcaption{$\overline{\SL_3}(2)$ (Case-I.2)}
\label{SL-I.2}
\medskip
\end{minipage}
\begin{minipage}[b]{.45 \textwidth}
 \centering
\begin{tikzpicture}
\clip (-0.5,-1) rectangle (5.5,3.5); 
\begin{scope}[y=(60:1)]

\coordinate (pi1) at (1,0);
\coordinate (pi2) at (0,1);
\coordinate (v0) at (0,7/2);
\coordinate (v1) at (1,3);
\coordinate (v2) at (4,1);
\coordinate (v3) at (5,0);
\coordinate (a1) at (2,-1);
\coordinate (a2) at (-1,2);
\coordinate (barycenter) at (3.30873727999193-0.781173887767347, 0.781173887767347*2);

\coordinate (Origin) at (0,0);
\coordinate (asum) at ($(a1)+(a2)$);
\coordinate (2rho) at ($2*(asum)$);

\foreach \x  in {-8,-7,...,9}{
  \draw[help lines,thick,dotted]
    (\x,-8) -- (\x,9)
    (-8,\x) -- (9,\x);
}

\foreach \x  in {-8,-7,...,9}{
  \draw[help lines,dotted]
    [rotate=60] (\x,-8) -- (\x,9) ;
}

\fill (Origin) circle (2pt) node[below left] {0};
\fill (pi1) circle (2pt) node[below] {$\varpi_1$};
\fill (pi2) circle (2pt) node[above] {$\varpi_2$};
\fill (a1) circle (2pt) node[above] {$\alpha_1$};
\fill (a2) circle (2pt) node[above] {$\alpha_2$};
\fill (asum) circle (2pt) node[below] {$\alpha_1+\alpha_2$};
\fill (2rho) circle (2pt) node[left] {$2\rho$};

\fill (v1) circle (2pt) node[right] {$A_1$};
\fill (v2) circle (2pt) node[right] {$A_2$};
\fill (v3) circle (2pt) node[below] {$A_3$};

\fill (barycenter) circle (2pt) node[below] {$\textbf{bar}(\Delta_3)$};

\draw[->,blue,thick](Origin)--(pi1);
\draw[->,blue,thick](Origin)--(pi2); 
\draw[->,blue,thick](Origin)--(a1);
\draw[->,blue,thick](Origin)--(a2);
\draw[->,blue,thick](Origin)--(asum); 

\draw[thick](Origin)--(v0);
\draw[thick](v0)--(v1);
\draw[thick](v1)--(v2);
\draw[thick](v2)--(v3);
\draw[thick](Origin)--(v3);

\aeMarkRightAngle[size=6pt](Origin,v0,v1)

\draw [shorten >=-4cm, red, thick, dashed] (2rho) to ($(2rho)+(a1)$);
\draw [shorten >=-4cm, red, thick, dashed] (2rho) to ($(2rho)+(a2)$);
\end{scope}
\end{tikzpicture} 
\subcaption{$\overline{\SL_3}(3)$ (Case-I.3.2)}
\label{SL-I.3.2}
\medskip
\end{minipage}

 \begin{minipage}[b]{.45 \textwidth}
 \centering
\begin{tikzpicture}
\clip (-0.5,-1) rectangle (7.5,3.5); 
\begin{scope}[y=(60:1)]

\coordinate (pi1) at (1,0);
\coordinate (pi2) at (0,1);
\coordinate (v0) at (0,7/2);
\coordinate (v1) at (7,0);
\coordinate (a1) at (2,-1);
\coordinate (a2) at (-1,2);
\coordinate (barycenter) at (1183/288-203/288, 203/144);

\coordinate (Origin) at (0,0);
\coordinate (asum) at ($(a1)+(a2)$);
\coordinate (2rho) at ($2*(asum)$);

\foreach \x  in {-8,-7,...,9}{
  \draw[help lines,thick,dotted]
    (\x,-8) -- (\x,9)
    (-8,\x) -- (9,\x);
}

\foreach \x  in {-8,-7,...,9}{
  \draw[help lines,dotted]
    [rotate=60] (\x,-8) -- (\x,9) ;
}

\fill (Origin) circle (2pt) node[below left] {0};
\fill (pi1) circle (2pt) node[below] {$\varpi_1$};
\fill (pi2) circle (2pt) node[above] {$\varpi_2$};
\fill (a1) circle (2pt) node[above] {$\alpha_1$};
\fill (a2) circle (2pt) node[above] {$\alpha_2$};
\fill (asum) circle (2pt) node[below] {$\alpha_1+\alpha_2$};
\fill (2rho) circle (2pt) node[below] {$2\rho$};

\fill (v1) circle (2pt) node[above] {$A_1$};

\fill (barycenter) circle (2pt) node[right] {$\textbf{bar}(\Delta_4)$};

\draw[->,blue,thick](Origin)--(pi1);
\draw[->,blue,thick](Origin)--(pi2); 
\draw[->,blue,thick](Origin)--(a1);
\draw[->,blue,thick](Origin)--(a2);
\draw[->,blue,thick](Origin)--(asum); 

\draw[thick](Origin)--(v0);
\draw[thick](v0)--(v1);
\draw[thick](Origin)--(v1);

\aeMarkRightAngle[size=6pt](Origin,v0,v1)

\draw [shorten >=-4cm, red, thick, dashed] (2rho) to ($(2rho)+(a1)$);
\draw [shorten >=-4cm, red, thick, dashed] (2rho) to ($(2rho)+(a2)$);
\end{scope}
\end{tikzpicture} 
\subcaption{$\overline{\SL_3}(4)$ (Case-I.1)}
\label{SL-I.1}

\end{minipage}
\begin{minipage}[b]{.45 \textwidth}
 \centering
\begin{tikzpicture}
\clip (-0.5,-1) rectangle (5.5,3.5); 
\begin{scope}[y=(60:1)]

\coordinate (pi1) at (1,0);
\coordinate (pi2) at (0,1);
\coordinate (v0) at (0,7/2);
\coordinate (v1) at (1,3);
\coordinate (v2) at (5,0);
\coordinate (a1) at (2,-1);
\coordinate (a2) at (-1,2);
\coordinate (barycenter) at (3.11762728484412-0.794264226993090, 0.794264226993090*2);

\coordinate (Origin) at (0,0);
\coordinate (asum) at ($(a1)+(a2)$);
\coordinate (2rho) at ($2*(asum)$);

\foreach \x  in {-8,-7,...,9}{
  \draw[help lines,thick,dotted]
    (\x,-8) -- (\x,9)
    (-8,\x) -- (9,\x);
}

\foreach \x  in {-8,-7,...,9}{
  \draw[help lines,dotted]
    [rotate=60] (\x,-8) -- (\x,9) ;
}

\fill (Origin) circle (2pt) node[below left] {0};
\fill (pi1) circle (2pt) node[below] {$\varpi_1$};
\fill (pi2) circle (2pt) node[above] {$\varpi_2$};
\fill (a1) circle (2pt) node[above] {$\alpha_1$};
\fill (a2) circle (2pt) node[above] {$\alpha_2$};
\fill (asum) circle (2pt) node[below] {$\alpha_1+\alpha_2$};
\fill (2rho) circle (2pt) node[left] {$2\rho$};

\fill (v1) circle (2pt) node[above] {$A_1$};
\fill (v2) circle (2pt) node[below] {$A_2$};

\fill (barycenter) circle (2pt) node[below] {$\textbf{bar}(\Delta_5)$};

\draw[->,blue,thick](Origin)--(pi1);
\draw[->,blue,thick](Origin)--(pi2); 
\draw[->,blue,thick](Origin)--(a1);
\draw[->,blue,thick](Origin)--(a2);
\draw[->,blue,thick](Origin)--(asum); 

\draw[thick](Origin)--(v0);
\draw[thick](v0)--(v1);
\draw[thick](v1)--(v2);
\draw[thick](Origin)--(v2);

\aeMarkRightAngle[size=6pt](Origin,v0,v1)

\draw [shorten >=-4cm, red, thick, dashed] (2rho) to ($(2rho)+(a1)$);
\draw [shorten >=-4cm, red, thick, dashed] (2rho) to ($(2rho)+(a2)$);
\end{scope}
\end{tikzpicture} 
\subcaption{$\overline{\SL_3}(5)$ (Case-I.3.1)}
\label{SL-I.3.1}

\end{minipage}
\caption{Moment polytopes of Gorenstein Fano compactifications of $\SL_3(\mathbb{C})$.}
\label{SL-polytope}
\end{figure}

\clearpage

\subsubsection{Gorenstein Fano group compactification of $\PSL_3(\mathbb C)$ }

As $\PSL_3(\mathbb C)$ is of adjoint type, 
the spherical weight lattice $\mathcal M$ is spanned by the simple roots $\alpha_1 = \Big(\frac{3}{2}, -\frac{\sqrt{3}}{2}\Big)$ and $\alpha_2=(0, \sqrt{3})$, and  
its dual lattice $\mathcal N$ is spanned by the fundamental coweights $\varpi_1^{\vee}=\Big(\frac{2}{3},0\Big)$ and $\varpi_2^{\vee}=\Big(\frac{1}{3},\frac{\sqrt{3}}{3}\Big)$.
The Weyl walls are given by  $W_1=\{y=0\}$ and $W_2=\{y=\sqrt{3}x\}$. 
The sum of positive roots is $2\rho=(3,\sqrt{3})$.

\begin{theorem}
There are three Gorenstein Fano equivariant compactifications of $\PSL_3(\mathbb{C})$. 
Their moment polytopes are given in the following Table~\ref{table:PSL3} and Figure~\ref{PSL-polytope}. 
They all are smooth and admit K\"{a}hler--Einstein metrics. 
{\rm\small
\begin{table}[h]
\begin{tabular}{|c|c|l|l|l|c|c|}\hline
$X$ & $\Delta(K_X^{-1})$ &	Case & Edges (except Weyl walls)  & Vertices	&	Smoothness & 	KE \\
\hline
$\overline{\PSL_3}(1)$ & $\Delta_1$	&	I.3	&	$x+\sqrt{3}y=9$  & $\Big(\frac{9}{4},\frac{9}{4}\sqrt{3}\Big), (9,0)$ 	& smooth	&	Yes 
\\ \hline
$\overline{\PSL_3}(2)$ & $\Delta_2$	&	I.2	&	$x+\sqrt{3}y=9,\ x=\frac{9}{2}$  &	$\Big(\frac{9}{4},\frac{9}{4}\sqrt{3}\Big), \Big(\frac{9}{2},\frac{3}{2}\sqrt{3}\Big), \Big(\frac{9}{2},0\Big)$	& smooth	&	Yes  
\\ \hline
\multirow{2}{*}{$\overline{\PSL_3}(3)$} & \multirow{2}{*}{$\Delta_3$}	&	\multirow{2}{*}{I.1}	&	$x+\sqrt{3}y=9,\ x=\frac{9}{2},$   & $\Big(\frac{9}{4},\frac{9}{4}\sqrt{3}\Big), \Big(\frac{9}{2},\frac{\sqrt{3}}{2}\Big), \Big(\frac{9}{2},0\Big),$	& \multirow{2}{*}{smooth}	&	\multirow{2}{*}{Yes} 
\\ 
 & & & $3x+\sqrt{3}y=15$ & $(3,2\sqrt{3})$ & &
\\\hline
\end{tabular}
\caption{Gorenstein Fano equivariant compactifications of $\PSL_3(\mathbb{C})$.}
\label{table:PSL3}
\end{table}}
\end{theorem}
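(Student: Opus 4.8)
The plan is to run, for $\PSL_3(\mathbb C)$, the same three-stage argument just carried out for $\SL_3(\mathbb C)$: classify the moment polytopes via Corollary~\ref{gorenstein polytope}, decide smoothness via Proposition~\ref{AK}, and decide the existence of K\"ahler--Einstein metrics via Theorem~\ref{KE criterion for Q-Fano group compactifications}. The relevant data are now that $\mathcal M = \mathbb Z\alpha_1 + \mathbb Z\alpha_2$ is the root lattice, $\mathcal N$ the coweight lattice spanned by $\varpi_1^\vee,\varpi_2^\vee$, the Weyl walls are $W_1 = \{y=0\} = \alpha_2^\perp$ and $W_2 = \{y=\sqrt3 x\} = \alpha_1^\perp$, and $2\rho = (3,\sqrt3)$.

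For the classification, by Corollary~\ref{gorenstein polytope} each Gorenstein Fano equivariant compactification corresponds to a polytope $\Delta \subset \mathcal M\otimes\mathbb R$ having the origin as a vertex, two facets on $W_1$ and $W_2$, and every remaining facet on an affine hyperplane $\{m : \langle\nu, m-2\rho\rangle = -1\}$ with $\nu = m\varpi_1^\vee + n\varpi_2^\vee \in \mathcal N$ primitive. I would enumerate all such $\Delta$ by walking the boundary from the facet $F_1$ meeting $W_2$ and splitting into the cases $F_1\perp W_2$ and $F_1\not\perp W_2$, exactly as above: convexity of $\Delta$ restricts $\nu$ to a bounded cone of directions, the primitivity of $\nu$ together with the requirement that the relevant vertices of $\Delta$ lie in $\mathcal M$ leaves only finitely many continuations at each step, and after discarding the polytopes related by the diagram automorphism $\alpha_1\leftrightarrow\alpha_2$ of $\mathsf{A_2}$ one should be left with exactly the three polytopes $\Delta_1,\Delta_2,\Delta_3$ of Table~\ref{table:PSL3}, having one, two and three non-Weyl facets respectively. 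That each of them genuinely comes from a Gorenstein Fano compactification is the converse statement recorded after Corollary~\ref{gorenstein polytope}, which one verifies on the explicit data.

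To establish smoothness I would, for each $i$, form the toric polytope $\Delta^{toric}_i = \bigcup_{w\in W}w\Delta_i$ under the six-element Weyl group $W$ of $\mathsf{A_2}$. Because the two edges of $\Delta_i$ at the origin run along the mirrors $W_1,W_2$, the copies $w\Delta_i$ fill a full neighbourhood of the origin, so the origin is interior to $\Delta^{toric}_i$; moreover the edge $x+\sqrt3 y = 9$ at the $W_2$-vertex $\big(\tfrac{9}{4},\tfrac{9}{4}\sqrt3\big)$ is perpendicular to $W_2$, and for $\overline{\PSL_3}(2)$ and $\overline{\PSL_3}(3)$ the edge $x=\tfrac{9}{2}$ at the $W_1$-vertex $\big(\tfrac{9}{2},0\big)$ is perpendicular to $W_1$, so these vertices merge into edge-interiors of $\Delta^{toric}_i$ after reflection. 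I would then check that all remaining off-wall vertex cones of $\Delta^{toric}_i$ are unimodular. For $\overline{\PSL_3}(2)$ and $\overline{\PSL_3}(3)$ no vertex of $\Delta^{toric}_i$ then lies on a Weyl wall, so Proposition~\ref{AK}(2) gives smoothness at once; for $\overline{\PSL_3}(1)$ the vertex $(9,0)$ survives on $W_1$, its edge cone $\langle -\alpha_1,\,-(\alpha_1+\alpha_2)\rangle$ is nevertheless a $\mathbb Z$-basis of $\mathcal M$, and I would deduce smoothness at the corresponding orbit by a direct computation in an adapted affine chart, Proposition~\ref{AK}(2) being stated only when no vertex of $\Delta^{toric}$ lies on a Weyl wall.

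Finally, for the K\"ahler--Einstein assertion I would use the Duistermaat--Heckman density $\big(\tfrac{3}{2}x-\tfrac{\sqrt3}{2}y\big)^2\big(\tfrac{3}{2}x+\tfrac{\sqrt3}{2}y\big)^2(\sqrt3 y)^2\,dx\,dy$ attached to $\Phi^+ = \{\alpha_1,\alpha_2,\alpha_1+\alpha_2\}$, exactly as for $\SL_3(\mathbb C)$, and compute $\textbf{bar}_{DH}(\Delta_i)$ for $i=1,2,3$ by integrating this density and its products with $x$ and $y$ over the triangle, quadrilateral and pentagon $\Delta_1,\Delta_2,\Delta_3$. It then remains to verify that $\textbf{bar}_{DH}(\Delta_i)\in 2\rho+\Xi$ for each $i$ — equivalently, writing $\textbf{bar}_{DH}(\Delta_i)-2\rho = a_i\alpha_1+b_i\alpha_2$, that $a_i>0$ and $b_i>0$ — whereupon Theorem~\ref{KE criterion for Q-Fano group compactifications} yields a K\"ahler--Einstein metric on each $\overline{\PSL_3}(i)$; unlike the $\SL_3(\mathbb C)$ case, no greatest-Ricci-lower-bound computation is needed here. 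The main obstacle is the completeness of the enumeration: I must exhaust every admissible continuation of the boundary walk and, conversely, recognize and remove \emph{all} pairs of polytopes exchanged by the $\alpha_1\leftrightarrow\alpha_2$ symmetry, so that precisely three classes survive. A secondary difficulty is the smoothness of $\overline{\PSL_3}(1)$, where Proposition~\ref{AK}(2) does not apply verbatim. By contrast the barycenter integrals, though lengthy, are entirely mechanical, and since all three barycenters land well inside the open cone $2\rho+\Xi$ no borderline estimate is involved.
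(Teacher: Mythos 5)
Your classification step and your K\"ahler--Einstein step follow the same route as the paper: the boundary-walk enumeration of facets with primitive normals in $\mathcal N$ subject to the Gorenstein condition of Corollary~\ref{gorenstein polytope}, removal of duplicates, and then the barycenter criterion of Theorem~\ref{KE criterion for Q-Fano group compactifications}. (For $\overline{\PSL_3}(1)$ the paper does not even compute the barycenter: it identifies the variety with $\mathbb P(\mathrm{Mat}_{3\times 3}(\mathbb C))=\mathbb P^8$ via Ruzzi's classification of smooth projective symmetric varieties of Picard number one, and gets the metric from homogeneity; your barycenter computation for $\Delta_1$ is a perfectly valid, slightly more computational, substitute, since $\textbf{bar}_{DH}(\Delta_1)-2\rho$ does lie in the open cone spanned by $\alpha_1,\alpha_2$.)

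Where you genuinely diverge is smoothness, and this is also where your proposal has its one soft spot. For $\overline{\PSL_3}(2)$ and $\overline{\PSL_3}(3)$ your use of Proposition~\ref{AK}(2) is fine, because the wall vertices $\bigl(\tfrac94,\tfrac94\sqrt3\bigr)$ and $\bigl(\tfrac92,0\bigr)$ have their non-Weyl edges orthogonal to the respective walls, so no vertex of $\Delta^{toric}$ lies on a Weyl wall and the Delzant check closes the argument; the paper instead realizes these varieties as successive blow-ups of $\overline{\PSL_3}(1)$ along closed orbits (Brion's description of birational morphisms in rank two), which yields smoothness without any toric computation. For $\overline{\PSL_3}(1)$, however, the vertex $9\varpi_1=(9,0)$ of $\Delta_1^{toric}$ does lie on the wall $W_1$, so, as you correctly note, Proposition~\ref{AK}(2) is silent there, and unimodularity of the edge cone $\langle -\alpha_1,-(\alpha_1+\alpha_2)\rangle$ by itself does not decide smoothness of the group compactification at the corresponding closed orbit. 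Your ``direct computation in an adapted affine chart'' is only a placeholder for the hardest step: to make it precise you would need the local structure theorem for spherical varieties (or Timashev's smoothness criterion for reductive group compactifications) at that orbit, which is substantially more than a toric Delzant check. The paper sidesteps this entirely by invoking Ruzzi's Theorem~5 to identify the compactification with $\mathbb P^8$; either supply that identification (e.g.\ by checking that $\mathbb P(\mathrm{Mat}_{3\times 3}(\mathbb C))$ has anticanonical moment polytope $\Delta_1$) or carry out the local-model computation explicitly --- as written, that step is not yet a proof.
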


\begin{proof}
Let
$
m_i\varpi_1^{\vee}+n_i\varpi_2^{\vee}=\Big(\frac{2m_i+n_i}{3},\frac{n_i}{3}\sqrt{3}\Big)\in\mathcal N
$
be the primitive outer normal vector of the facet 
$$
F_i=\{(2m_i+n_i)x+n_i\sqrt{3}y=3+6m_i+6n_i\},
$$
where $m_i\geq0,n_i\geq0$ by the convexity of the polytope.
Let $F_1$ be the facet which intersects the Weyl wall $W_2$. 
\begin{itemize}
\item {\bf Case-I:} $F_1$ is orthogonal to $W_2$.\\
$F_1\perp W_2$
$\Rightarrow$ $(m_1,n_1)=(0,1)$
$\Rightarrow$ $F_1=\{x+\sqrt{3}y=9\}$, 
$F_1\cap W_2=\Big(\frac{9}{4},\frac{9}{4}\sqrt{3}\Big)$.\\
Then the vertex point $A_1:=F_1\cap F_2\in \mathcal M$ is given by 
$$
A_1=\Big(3+\frac{3}{2} \Big(\frac{1-n_2}{m_2}\Big), \Big\{2-\frac{1}{2} \Big(\frac{1-n_2}{m_2}\Big) \Big\}\sqrt{3}\Big).
$$
The convexity and primitive conditions imply that
 $A_1=2\alpha_1+3\alpha_2$ or $3\alpha_1+3\alpha_2$ or $4\alpha_1+3\alpha_2$ or $5\alpha_1+3\alpha_2$ or $6\alpha_1+3\alpha_2$.
But if $A_1=4\alpha_1+3\alpha_2$ or $5\alpha_1+3\alpha_2$, there is no lattice polytope.\medskip
\begin{itemize}
\item {\bf Case-I.1:} $A_1=2\alpha_1+3\alpha_2=(3,2\sqrt{3})$ $\Rightarrow$ $n_2=1$ $\Rightarrow$ 
$F_2=\{(2m_2+1)x+\sqrt{3}y=9+6m_2\}$.\\
By convexity and primitive condition, $A_2=3\alpha_1+2\alpha_2$ or $4\alpha_1+2\alpha_2$.\\
If $A_2=4\alpha_1+2\alpha_2=(6,0)$ $\Rightarrow$ $6m_2=3$ so that this is not Gorenstein.\\
If $A_2=3\alpha_1+2\alpha_2=(\frac{9}{2},\frac{\sqrt{3}}{2})$ $\Rightarrow$ 
$(m_2,n_2)=(1,1)$ so that $F_2=\{3x+\sqrt{3}y=15\}$.\\
In this case, $(m_3,n_3)=(1,0)$ so that $F_3=\{x=\frac{9}{2}\}\perp W_1$ {\small\bf($\Delta_3$,\ Figure ~\ref{PSL-I.1})}. 
\item {\bf Case-I.2:} $A_1=3\alpha_1+3\alpha_2=\Big(\frac{9}{2},\frac{3}{2}\sqrt{3}\Big)$ $\Rightarrow$ $\frac{1-n_2}{m_2}=1$ $\Rightarrow$ $(m_2,n_2)=(1,0)$ so that $F_2=\Big\{x=\frac{9}{2}\Big\}$ and $F_2\perp W_1$ {\small\bf($\Delta_2$,\ Figure ~\ref{PSL-I.2})}. 

\item {\bf Case-I.3:} $A_1=6\alpha_1+3\alpha_2=9\varpi_1$ {\small\bf($\Delta_1$,\ Figure ~\ref{PSL-I.3})}.\medskip
\end{itemize}

\item {\bf Case-II:} $F_1$ is not orthogonal to $W_2$.\\
$F_1\cap W_2=:A_1=(\frac{2m_1+2n_1+1}{m_1+2n_1} \cdot \frac{3}{2},\frac{2m_1+2n_1+1}{m_1+2n_1} \cdot \frac{3}{2}\sqrt{3})\in \mathcal M$ $\Rightarrow$
$\frac{2m_1+2n_1+1}{m_1+2n_1} = 1+\frac{m_1+1}{m_1+2n_1}$
$\Rightarrow$ $(m_1,n_1)=(1,0)$.
This is isomorphic to Case-I.3.\medskip
\end{itemize}

(1) Case-I.3. 
By \cite[Theorem 5]{Ruzzi2010}, the variety $\overline{\PSL_3}(1)$ is isomorphic to $\mathbb P(\text{Mat}_{3 \times 3}(\mathbb C)) = \mathbb P^8$ as $\PSL_3(\mathbb C) \times \PSL_3(\mathbb C)$-variety. 
This has three orbits and the closed orbit consists of rank one $3 \times 3$ matrices which is isomorphic to $\mathbb P^2 \times \mathbb P^2$.
As $\overline{\PSL_3}(1)$ is a homogeneous variety, it naturally admits a K\"{a}hler--Einstein metric (see \cite[Section~5]{Matsushima72}). 

(2) Case-I.2.  
The variety $\overline{\PSL_3}(2)$ is the blow-up of the smooth Fano compactification $\overline{\PSL_3}(1)$ along the closed orbit corresponding to a vertex $A_1$ in Figure~\ref{PSL-I.3}. 
Indeed, $\overline{\PSL_3}(2)$ is the wonderful compactification of $\PSL_3(\mathbb C)$. 
Since the barycenter 
$$\textbf{bar}_{DH}(\Delta_{2}) = \left(\frac{24641}{6592}, \frac{24641}{19776}\sqrt{3}\right) = \frac{24641}{19776} \times 2\rho \approx (3.738, 1.246 \times \sqrt{3})$$ 
of the moment polytope $\Delta_{2}$ with respect to the Duistermaat--Heckman measure is in the relative interior of the translated cone $2 \rho + \mathcal C^+$, 
$\overline{\PSL_3}(2)$ admit a K\"{a}hler--Einstein metric by Theorem~\ref{KE criterion for Q-Fano group compactifications}.

(3) Case-I.1.  
The variety $\overline{\PSL_3}(3)$ is the blow-up of 
$\overline{\PSL_3}(2)$ along the closed orbit corresponding to a vertex $A_1$ in Figure~\ref{PSL-I.2}. 
Since the barycenter 
$$\textbf{bar}_{DH}(\Delta_{3}) = \left(\frac{189565091}{57005952}, \frac{189565091}{171017856}\sqrt{3}\right) = \frac{189565091}{171017856} \times 2\rho \approx (3.325, 1.108 \times \sqrt{3})$$ 
of the moment polytope $\Delta_{3}$ with respect to the Duistermaat--Heckman measure is in the relative interior of the translated cone $2 \rho + \mathcal C^+$, 
$\overline{\PSL_3}(3)$ admit a K\"{a}hler--Einstein metric by Theorem~\ref{KE criterion for Q-Fano group compactifications}.
\end{proof}


\begin{figure}[h]

\begin{minipage}[b]{.5 \textwidth}
 \centering
\begin{tikzpicture}
\clip (-0.5,-1) rectangle (9.3,4.5); 
\foreach \x  in {-9,-7.5,...,9}{
  \draw[help lines,thick,dotted]
    (\x,-9) -- (\x,9)
    [rotate=-30] (-9,\x) -- (9,\x);
}
\foreach \x  in {-9,-7.5,...,9}{
  \draw[help lines,dotted]
    [rotate=30] (-9,\x) -- (9,\x);
}

\begin{scope}[y=(60:1)]

\coordinate (pi1) at (1,0);
\coordinate (pi2) at (0,1);
\coordinate (v0) at (0,9/2);
\coordinate (v1) at (9,0);
\coordinate (a1) at (2,-1);
\coordinate (a2) at (-1,2);
\coordinate (barycenter) at (5.28125-0.90625, 0.90625*2);

\coordinate (Origin) at (0,0);
\coordinate (asum) at ($(a1)+(a2)$);
\coordinate (2rho) at ($2*(asum)$);

\foreach \x  in {-10,-9,...,12}{
  \draw[help lines,dotted]
    (\x,-10) -- (\x,12)
    (-10,\x) -- (12,\x) 
    [rotate=60] (\x,-10) -- (\x,12) ;
}

\fill (Origin) circle (2pt) node[below left] {0};
\fill (pi1) circle (2pt) node[below] {$\varpi_1$};
\fill (pi2) circle (2pt) node[above] {$\varpi_2$};
\fill (a1) circle (2pt) node[above] {$\alpha_1$};
\fill (a2) circle (2pt) node[above] {$\alpha_2$};
\fill (asum) circle (2pt) node[below] {$\alpha_1+\alpha_2$};
\fill (2rho) circle (2pt) node[below] {$2\rho$};

\fill (v1) circle (2pt) node[below] {$A_1$};

\fill (barycenter) circle (2pt) node[below] {$\textbf{bar}_{DH}(\Delta_1)$};

\draw[->,blue,thick](Origin)--(pi1);
\draw[->,blue,thick](Origin)--(pi2); 
\draw[->,blue,thick](Origin)--(a1);
\draw[->,blue,thick](Origin)--(a2);
\draw[->,blue,thick](Origin)--(asum); 

\draw[thick](Origin)--(v0);
\draw[thick](v0)--(v1);
\draw[thick](Origin)--(v1);

\aeMarkRightAngle[size=6pt](Origin,v0,v1)

\draw [shorten >=-5cm, red, thick, dashed] (2rho) to ($(2rho)+(a1)$);
\draw [shorten >=-4cm, red, thick, dashed] (2rho) to ($(2rho)+(a2)$);
\end{scope}
\end{tikzpicture} 
\subcaption{$\overline{\PSL_3}(1)$ (Case-I.3)}
\label{PSL-I.3}
\medskip
\end{minipage}

\begin{minipage}[b]{.45 \textwidth}
 \centering
\begin{tikzpicture}
\clip (-0.5,-1) rectangle (5,4.5); 
\foreach \x  in {-9,-7.5,...,9}{
  \draw[help lines,thick,dotted]
    (\x,-9) -- (\x,9)
    [rotate=-30] (-9,\x) -- (9,\x);
}
\foreach \x  in {-9,-7.5,...,9}{
  \draw[help lines,dotted]
    [rotate=30] (-9,\x) -- (9,\x);
}
\begin{scope}[y=(60:1)]

\coordinate (pi1) at (1,0);
\coordinate (pi2) at (0,1);
\coordinate (v0) at (0,9/2);
\coordinate (v1) at (3,3);
\coordinate (v2) at (9/2,0);
\coordinate (a1) at (2,-1);
\coordinate (a2) at (-1,2);
\coordinate (barycenter) at (3.73801577669903-1.24600525889968, 1.24600525889968*2);

\coordinate (Origin) at (0,0);
\coordinate (asum) at ($(a1)+(a2)$);
\coordinate (2rho) at ($2*(asum)$);

\foreach \x  in {-8,-7,...,9}{
  \draw[help lines,dotted]
    (\x,-8) -- (\x,9)
    (-8,\x) -- (9,\x) 
    [rotate=60] (\x,-8) -- (\x,9) ;
}

\fill (Origin) circle (2pt) node[below left] {0};
\fill (pi1) circle (2pt) node[below] {$\varpi_1$};
\fill (pi2) circle (2pt) node[above] {$\varpi_2$};
\fill (a1) circle (2pt) node[above] {$\alpha_1$};
\fill (a2) circle (2pt) node[above] {$\alpha_2$};
\fill (asum) circle (2pt) node[below] {$\alpha_1+\alpha_2$};
\fill (2rho) circle (2pt) node[below] {$2\rho$};

\fill (v1) circle (2pt) node[above] {$A_1$};

\fill (barycenter) circle (2pt) node[above left] {$\textbf{bar}_{DH}(\Delta_2)$};

\draw[->,blue,thick](Origin)--(pi1);
\draw[->,blue,thick](Origin)--(pi2); 
\draw[->,blue,thick](Origin)--(a1);
\draw[->,blue,thick](Origin)--(a2);
\draw[->,blue,thick](Origin)--(asum); 

\draw[thick](Origin)--(v0);
\draw[thick](v0)--(v1);
\draw[thick](v1)--(v2);
\draw[thick](Origin)--(v2);

\aeMarkRightAngle[size=6pt](Origin,v0,v1)
\aeMarkRightAngle[size=6pt](v1,v2,Origin)

\draw [shorten >=-4cm, red, thick, dashed] (2rho) to ($(2rho)+(a1)$);
\draw [shorten >=-4cm, red, thick, dashed] (2rho) to ($(2rho)+(a2)$);
\end{scope}
\end{tikzpicture} 
\subcaption{$\overline{\PSL_3}(2)$ (Case-I.2)}
\label{PSL-I.2}

\end{minipage}
\begin{minipage}[b]{.45 \textwidth}
 \centering
\begin{tikzpicture}
\clip (-0.5,-1) rectangle (5.5,4.5); 
\foreach \x  in {-9,-7.5,...,9}{
  \draw[help lines,thick,dotted]
    (\x,-9) -- (\x,9)
    [rotate=-30] (-9,\x) -- (9,\x);
}
\foreach \x  in {-9,-7.5,...,9}{
  \draw[help lines,dotted]
    [rotate=30] (-9,\x) -- (9,\x);
}
\begin{scope}[y=(60:1)]

\coordinate (pi1) at (1,0);
\coordinate (pi2) at (0,1);
\coordinate (v0) at (0,9/2);
\coordinate (v1) at (1,4);
\coordinate (v2) at (4,1);
\coordinate (v3) at (9/2,0);
\coordinate (a1) at (2,-1);
\coordinate (a2) at (-1,2);
\coordinate (barycenter) at (3.32535611369143-1.10845203789714, 1.10845203789714*2);

\coordinate (Origin) at (0,0);
\coordinate (asum) at ($(a1)+(a2)$);
\coordinate (2rho) at ($2*(asum)$);

\foreach \x  in {-8,-7,...,9}{
  \draw[help lines,dotted]
    (\x,-8) -- (\x,9)
    (-8,\x) -- (9,\x) 
    [rotate=60] (\x,-8) -- (\x,9) ;
}

\fill (Origin) circle (2pt) node[below left] {0};
\fill (pi1) circle (2pt) node[below] {$\varpi_1$};
\fill (pi2) circle (2pt) node[above] {$\varpi_2$};
\fill (a1) circle (2pt) node[above] {$\alpha_1$};
\fill (a2) circle (2pt) node[above] {$\alpha_2$};
\fill (asum) circle (2pt) node[below] {$\alpha_1+\alpha_2$};
\fill (2rho) circle (2pt) node[below] {$2\rho$};

\fill (v1) circle (2pt) node[right] {$A_1$};
\fill (v2) circle (2pt) node[above right] {$A_2$};

\fill (barycenter) circle (2pt) node[below right] {$\textbf{bar}_{DH}(\Delta_3)$};

\draw[->,thick,blue](Origin)--(pi1);
\draw[->,thick,blue](Origin)--(pi2); 
\draw[->,thick,blue](Origin)--(a1);
\draw[->,thick,blue](Origin)--(a2);
\draw[->,thick,blue](Origin)--(asum); 

\draw[thick](Origin)--(v0);
\draw[thick](v0)--(v1);
\draw[thick](v1)--(v2);
\draw[thick](v2)--(v3);
\draw[thick](Origin)--(v3);

\aeMarkRightAngle[size=6pt](Origin,v0,v1)
\aeMarkRightAngle[size=6pt](v2,v3,Origin)

\draw [shorten >=-4cm, red, thick, dashed] (2rho) to ($(2rho)+(a1)$);
\draw [shorten >=-4cm, red, thick, dashed] (2rho) to ($(2rho)+(a2)$);
\end{scope}
\end{tikzpicture} 
\subcaption{$\overline{\PSL_3}(3)$ (Case-I.1)}
\label{PSL-I.1}

\end{minipage}
\caption{Moment polytopes of Gorenstein Fano compactifications of $\PSL_3(\mathbb{C})$.}
\label{PSL-polytope}
\end{figure}

\clearpage
\subsection{$\mathsf{B_2}$-type}
\subsubsection{Gorenstein Fano group compactifications of $\Sp_4(\mathbb C) \cong \Spin_5(\mathbb C)$}

As $\Sp_4(\mathbb C)$ is simply-connected, 
the spherical weight lattice $\mathcal M$ is spanned by $\varpi_1 = \Big(\frac{1}{2}, \frac{1}{2}\Big)$ and $\varpi_2 = (0, 1)$, and 
its dual lattice $\mathcal N$ is spanned by $\alpha_1^{\vee} = (2, 0)$ and $\alpha_2^{\vee} = (-1, 1)$.
The Weyl walls are given by $W_1=\{y=x\}$ and $W_2=\{x=0\}$.
The sum of positive roots is $2\rho=(1,3)$.
Choosing a realization of the root system $\mathsf{C_2}$ in the Euclidean plane $\mathbb R^2$ with $\alpha_1 = (1, 0)$ and $\alpha_2 = (-1, 1)$, 
for $p=(x, y)$ 
we obtain the Duistermaat--Heckman measure 
\begin{equation*}
\prod_{\alpha \in \Phi^+} \kappa(\alpha, p)^2 \, dp = x^2 (-x+y)^2 y^2 (x+y)^2 \, dxdy 
\end{equation*}
because $\mathsf{C_2}$ has 4 positive roots:
$\Phi^+ = \{ \alpha_1, \alpha_2, \alpha_1 + \alpha_2, 2\alpha_1 + \alpha_2 \} = \{ (1, 0), (-1, 1), (0, 1), (1, 1) \}$.

\begin{theorem}
There are four Gorenstein Fano equivariant compactifications of $\Sp_4(\mathbb{C})$ up to isomorphism: three smooth compactifications and one singular compactification. 
Their moment polytopes are given in the following Table~\ref{table;Sp4} and Figure~\ref{Sp-polytope}. 
Among them only two smooth Fano compactifications admit K\"{a}hler--Einstein metrics. 
\vskip -0.5em
{\rm\small
\begin{table}[h]
{\renewcommand{\arraystretch}{1.1}
\begin{tabular}{|c|c|l|l|l|c|c|}\hline
$X$ & $\Delta(K_X^{-1})$ & 	Case & Edges (except Weyl walls)  & Vertices &	Smoothness & 	KE \\
\hline
$\overline{\Sp_4}(1)$  & $\Delta_1$	&	I.3	&	$y=\frac{7}{2}$  & $\Big(0,\frac{7}{2}\Big), \Big(\frac{7}{2},\frac{7}{2}\Big)$  	&	smooth	 	& Yes  
\\ \hline
$\overline{\Sp_4}(2)$  & $\Delta_2$	&	I.2	&	$y=\frac{7}{2},\ x+y=5$  & $\Big(0,\frac{7}{2}\Big), \Big(\frac{3}{2},\frac{7}{2}\Big), \Big(\frac{5}{2},\frac{5}{2}\Big)$  	&	smooth	 	& Yes  
\\ \hline
\multirow{2}{*}{$\overline{\Sp_4}(3)$}  & \multirow{2}{*}{$\Delta_3$}	&	\multirow{2}{*}{I.1.2} &	$y=\frac{7}{2},\ x+y=5,$  & \multirow{2}{*}{$\Big(0,\frac{7}{2}\Big), \Big(\frac{1}{2},\frac{7}{2}\Big),(2,3), \Big(\frac{5}{2},\frac{5}{2}\Big)$}	&	\multirow{2}{*}{smooth}	 	& \multirow{2}{*}{No} 
\\ 
 & & & $x+3y=11$ & & &
\\ \hline
$\overline{\Sp_4}(4)$  & $\Delta_4$	&	I.1.1 &	$y=\frac{7}{2},\ 2x+4y=15$  & $\Big(0,\frac{7}{2}\Big), \Big(\frac{1}{2},\frac{7}{2}\Big),\Big(\frac{5}{2},\frac{5}{2}\Big)$	&	singular	 	& No 
\\ \hline
\end{tabular}}
\caption{Gorenstein Fano equivariant compactifications of $\Sp_4(\mathbb{C})$.}
\label{table;Sp4}
\vskip -1em
\end{table}}
\end{theorem}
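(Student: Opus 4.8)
The strategy mirrors the $\SL_3(\mathbb{C})$ and $\PSL_3(\mathbb{C})$ cases worked out above, so the proof splits into three stages: (1) enumerate all Gorenstein Fano bi-equivariant compactifications of $\Sp_4(\mathbb{C})$ via their moment polytopes using Corollary~\ref{gorenstein polytope}; (2) decide smoothness of each via the Alexeev--Katzarkov criterion, Proposition~\ref{AK}; (3) compute $\textbf{bar}_{DH}(\Delta_i)$ against the Duistermaat--Heckman measure $x^2(-x+y)^2y^2(x+y)^2\,dxdy$ and test membership in $2\rho+\mathcal{C}^+$ using Theorem~\ref{KE criterion for Q-Fano group compactifications}. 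For the enumeration, I would parametrize each facet $F_i$ (other than Weyl walls) by its primitive outer normal $m_i\alpha_1^{\vee}+n_i\alpha_2^{\vee}=(2m_i-n_i,\,n_i)\in\mathcal{N}$, so that
\[
F_i=\{(x,y)\in\mathcal{M}\otimes\mathbb{R}: (2m_i-n_i)x+n_iy=2m_i+2n_i+1\},
\]
with the convexity constraint forcing the slope to lie in the allowed range between the directions of $W_1=\{y=x\}$ and $W_2=\{x=0\}$. Starting from the facet $F_1$ meeting $W_2$, I split into the case where $F_1\perp W_2$ (Case I, giving $(m_1,n_1)=(0,1)$, i.e. $F_1=\{y=7/2\}$) versus $F_1$ oblique (Case II), then propagate vertex-by-vertex, at each step using that a new vertex $A_j=F_j\cap F_{j+1}$ must lie in $\mathcal{M}$ and that the next normal must be primitive; this forces a finite branching tree whose leaves are $\Delta_1,\dots,\Delta_4$ (Case~II collapsing to Case~I.3 by isomorphism, as in the $\PSL_3$ computation).

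For smoothness I would form $\Delta^{toric}=\bigcup_{w\in W}w\Delta_i$ for the dihedral Weyl group $W$ of type $B_2$ (order $8$) and check the Delzant condition at each vertex: $\Delta_1,\Delta_2,\Delta_3$ will be Delzant with no vertex on a Weyl wall, hence smooth by Proposition~\ref{AK}(2), while $\Delta_4$ (with edge $2x+4y=15$, whose normal $(1,2)$ is primitive but produces a non-unimodular vertex cone after reflection) fails Delzant and so $\overline{\Sp_4}(4)$ is singular by Proposition~\ref{AK}(1). For the Kähler--Einstein question, $2\rho=(1,3)$ and $\mathcal{C}^+$ is generated by $(1,0)$ and $(-1,1)$, so $2\rho+\mathcal{C}^+=\{(x,y):y>3,\ x+y>4\}$ (relative interior); I then integrate the degree-$6$ polynomial density over each polygon. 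For $\Delta_1$ and $\Delta_2$ the barycenter will land in the relative interior of $2\rho+\mathcal{C}^+$, giving a Kähler--Einstein metric; for $\Delta_3$ it will fall outside, so no Kähler--Einstein metric, and for the singular $\Delta_4$ likewise outside, so no singular Kähler--Einstein metric.

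\textbf{Identifications and additional structure.} As with the previous cases, I would identify the varieties geometrically: $\overline{\Sp_4}(1)$ is the Picard-number-one smooth projective symmetric variety of Ruzzi~\cite{Ruzzi2010} associated to $\Sp_4(\mathbb{C})$, $\overline{\Sp_4}(2)$ its blow-up along the closed orbit corresponding to the vertex $A_1$ of $\Delta_1$ (using \cite{Brion94} that every birational equivariant morphism between smooth rank-two spherical varieties is a composition of smooth blow-ups), and $\overline{\Sp_4}(3)$ a further blow-up; in particular $\overline{\Sp_4}(2)$ is the wonderful compactification of $\Sp_4(\mathbb{C})\cong\Spin_5(\mathbb{C})$ in the sense of De~Concini--Procesi. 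For the K-unstable smooth variety $\overline{\Sp_4}(3)$, the greatest Ricci lower bound will be computed separately (in Theorem~\ref{greatest Ricci lower bounds: Gorenstein Fano equivariant compactifications}) by Corollary~\ref{formula for greatest Ricci lower bounds}, intersecting the ray from $\textbf{bar}_{DH}(\Delta_3)$ in the direction $(1,3)$ with $\partial(\Delta_3-\mathcal{C}^+)$.

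\textbf{Main obstacle.} The combinatorial enumeration is bookkeeping, and the smoothness checks are mechanical; the genuinely delicate step is the exact evaluation of the weighted barycenters. The density is a product of four squared linear forms, so each coordinate of $\textbf{bar}_{DH}(\Delta_i)$ is a ratio of explicit (but large) rational integrals over triangles and quadrilaterals, and the Kähler--Einstein verdict hinges on whether these rationals satisfy the two strict inequalities defining $2\rho+\mathcal{C}^+$. The borderline-looking case is $\Delta_3$ versus $\Delta_2$ — one must be confident the inequality $x+y>4$ (resp. $y>3$) is violated for $\Delta_3$ and satisfied for $\Delta_2$ — so the real work is carrying the rational arithmetic exactly rather than numerically. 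I would organize the integration by decomposing each polygon along the line $y=x$ into pieces over which the integrand factors conveniently, or by a change of variables to the $(\alpha_1,\alpha_2)$-coordinates in which the linear forms become coordinate lines.
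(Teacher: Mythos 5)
Your overall strategy coincides with the paper's: enumerate the polytopes facet-by-facet from the normals $m_i\alpha_1^{\vee}+n_i\alpha_2^{\vee}=(2m_i-n_i,n_i)$ and the Gorenstein condition, decide smoothness, then test $\textbf{bar}_{DH}(\Delta_i)\in 2\rho+\Xi$, and your predicted outcomes (four polytopes, $\Delta_1,\Delta_2$ K\"ahler--Einstein, $\Delta_3,\Delta_4$ not, $\Delta_4$ singular) are the correct ones.

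There is, however, a genuine gap in your smoothness step for $\overline{\Sp_4}(1)$. You claim $\Delta_1$ ``will be Delzant with no vertex on a Weyl wall, hence smooth by Proposition~\ref{AK}(2)''. This is false on both counts: the toric polytope $\Delta_1^{toric}=\bigcup_{w\in W}w\Delta_1$ is the square with corners $(\pm\frac72,\pm\frac72)$ (the Weyl images of the vertex $7\varpi_1=(\frac72,\frac72)$), so \emph{every} vertex of $\Delta_1^{toric}$ lies on a Weyl wall $y=\pm x$, and moreover at such a corner the primitive edge generators of $\mathcal M=\mathbb Z\varpi_1+\mathbb Z\varpi_2$ (with $\varpi_1=(\frac12,\frac12)$, $\varpi_2=(0,1)$) are $(\mp1,0)$ and $(0,\mp1)$, which generate only the index-two sublattice $\mathbb Z^2\subsetneq\mathcal M$; so the Delzant condition itself fails there. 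Hence Proposition~\ref{AK}(2) cannot certify smoothness of $\overline{\Sp_4}(1)$, and some other argument is needed. The paper supplies it by identifying $\overline{\Sp_4}(1)$ with the homogeneous Lagrangian Grassmannian $\mathrm{Lag}(4,8)$ via Ruzzi's classification, which gives smoothness and the K\"ahler--Einstein metric simultaneously; you mention this identification only as ``additional structure'', but it is in fact the load-bearing step for this variety. (For $\Delta_2$ and $\Delta_3$ your AK route is fine, since the wall vertex $(\frac52,\frac52)$ is absorbed into the reflected edge $x+y=5$ and the remaining toric vertices are Delzant off the walls; this is a legitimate alternative to the paper's argument, which instead realizes $\overline{\Sp_4}(2)$ and $\overline{\Sp_4}(3)$ as blow-ups of $\overline{\Sp_4}(1)$ along smooth closed orbits using Brion's factorization.)

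Two smaller inaccuracies in the enumeration: with your own parametrization, $(m_1,n_1)=(0,1)$ gives the normal $\alpha_2^{\vee}=(-1,1)$ and the line $-x+y=3$, not $y=\frac72$; the facet $y=\frac72$ corresponds to $(m_1,n_1)=(1,2)$, i.e.\ the primitive vector $\alpha_1^{\vee}+2\alpha_2^{\vee}=(0,2)$. Also, Case~II does not ``collapse to Case~I.3 by isomorphism'' as in the $\PSL_3$ computation: for $\Sp_4$ the integrality of $F_1\cap W_2=(0,\tfrac{2m_1+2n_1+1}{n_1})$ together with convexity forces $\tfrac{2m_1+2n_1+1}{n_1}=4$, which is impossible (parity), so Case~II is simply empty. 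Neither slip affects the final list, but as written they would derail a literal execution of your branching argument, whereas the $\Delta_1$ smoothness issue is a real missing argument.
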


\begin{proof}
Let $m_i\alpha_1^{\vee}+n_i\alpha_2^{\vee}=(2m_i-n_i,n_i)\in\mathcal N$ be the primitive outer normal vector of the facet
$$
F_i=\{(2m_i-n_i)x+n_iy=2m_i+2n_i+1\},
$$
where $n_i\geq2m_i-n_i\geq0$ by the convexity of the polytope.

Let $F_1$ be the facet which intersects the Weyl wall $W_2$, then we have two cases:

\begin{itemize}
\item {\bf Case-I:} $F_1$ is orthogonal to $W_2$.\\
$F_1\perp W_2$ $\Rightarrow$ $(m_1,n_1)=(1,2)$ $\Rightarrow$ $F_1=\Big\{y=\frac{7}{2}\Big\}$, $F_1\cap W_2=\Big(0,\frac{7}{2}\Big)$.
Let $A_1:=F_1\cap F_2$.\\
$A_1=\Big(\frac{4m_2-3n_2+2}{4m_2-2n_2},\frac{7}{2}\Big)\in \mathcal M$ $\Rightarrow$ $A_1=\varpi_1+3\varpi_2$ or $3\varpi_1+2\varpi_2$ or $5\varpi_1+\varpi_2$ or $7\varpi_1$.\\
But if $A_1=5\varpi_1+1\varpi_2=\Big(\frac{5}{2},\frac{7}{2}\Big)$, there is no lattice polytope.
\begin{itemize}
\item {\bf Case-I.1:} $A_1=\varpi_1+3\varpi_2=\Big(\frac{1}{2},\frac{7}{2}\Big)$ 
$\Rightarrow$ $m_2+1=n_2$,
$F_2=\{(m_2-1)x+(m_2+1)y=4m_2+3\}$.
By convexity and primitive condition, $A_2=5\varpi_1$ or $4\varpi_1+\varpi_2$.
\begin{itemize}
\item {\bf Case-I.1.1:} $A_2=5\varpi_1$ $\Rightarrow$ $(m_2,n_2)=(3,4)$, $F_2=\{2x+4y=15\}$\\ {\small\bf($\Delta_4$,\  Figure ~\ref{Sp-I.1.1})}.
\item {\bf Case-I.1.2:} $A_2=4\varpi_1+\varpi_2$ $\Rightarrow$ $(m_2,n_2)=(2,3)$, $F_2=\{x+3y=11\}$\\ 
In this case, $(m_3,n_3)=(1,1)$ so that $F_3=\{x+y=5\}\perp W_1$
{\small\bf($\Delta_3$,\ Figure ~\ref{Sp-I.1.2})}.
\end{itemize}
\item {\bf Case-I.2:} $A_1=3\varpi_1+2\varpi_2=\Big(\frac{3}{2},\frac{7}{2}\Big)$ $\Rightarrow$ 
$(m_2,n_2)=(1,1)$, $F_2=\{x+y=5\}$\\ 
{\small\bf($\Delta_2$,\ Figure ~\ref{Sp-I.2})}.
\item{\bf Case-I.3:} $A_1=7\varpi_1=(\frac{7}{2},\frac{7}{2})$  {\small\bf($\Delta_1$,\ Figure ~\ref{Sp-I.3})}.
\end{itemize}
\item {\bf Case-II:} $F_1$ is not orthogonal to $W_2$.\\
$F_1\cap W_2=:A_1=(0,\frac{2m_1+2n_1+1}{n_1})\in \mathcal M$ 
$\Rightarrow$ $3+\frac{1}{n_1}\leq\frac{2m_1+2n_1+1}{n_1}<4+\frac{1}{n_1}$ $\Rightarrow$ $\frac{2m_1+2n_1+1}{n_1}=4$\\
In this case, there is no primitive vector.
\end{itemize}

(1) Case-I.3.
The variety $\overline{\Sp_4}(1)$ is isomorphic to the 10-dimensional Lagrangian Grassmannian $\text{Lag}(4, 8)$ parametrizing 4-dimensional isotropic subspaces in the 8-dimensional symplectic vector space (see \cite[Theorem~5]{Ruzzi2010}). 
Since $\overline{\Sp_4}(1)$ is homogeneous, it admits a K\"{a}hler--Einstein metric. 

(2) Case-I.2.
The variety $\overline{\Sp_4}(2)$ is the blow-up of the smooth Fano compactification $\overline{\Sp_4}(1)$ along the closed orbit corresponding to a vertex $A_1$ in Figure~\ref{Sp-I.3}. 
Indeed, $\overline{\Sp_4}(2)$ is the wonderful compactification of $\Sp_4(\mathbb C)$. 
Since $2 \rho = (1, 3)$ and the cone $\mathcal C^+$ is generated by the vectors $(1, 0)$ and $(-1, 1)$, 
the barycenter 
$$
\textbf{bar}_{DH}(\Delta_{2}) = {\scriptstyle\left(\frac{135148980025}{104829824704}, \frac{5019760035}{1637966011}\right)} \approx (1.289, 3.065)
$$
of the moment polytope $\Delta_{2}$ with respect to the Duistermaat--Heckman measure is in the relative interior of the translated cone $2 \rho + \mathcal C^+$. 
By Theorem~\ref{KE criterion for Q-Fano group compactifications}, $\overline{\Sp_4}(2)$ admits a K\"{a}hler--Einstein metric. 

(3) Case-I.1.2.
The variety $\overline{\Sp_4}(3)$ is the blow-up of the wonderful compactification $\overline{\Sp_4}(2)$ along the unique closed orbit corresponding to a vertex $A_1$ in Figure~\ref{Sp-I.2}. 
Since the barycenter 
$$
\textbf{bar}_{DH}(\Delta_{3}) = {\scriptstyle\left(\frac{27756440595}{22318407232}, \frac{3043253830}{1046175339}\right)} \approx (1.244, 2.909)
$$
of the moment polytope $\Delta_{3}$ with respect to the Duistermaat--Heckman measure is not in the relative interior of the translated cone $2 \rho + \mathcal C^+$, 
$\overline{\Sp_4}(3)$ does not admit any K\"{a}hler--Einstein metric as already showed in \cite[Example~5.4]{Del17}. 

(4) Case-I.1.1.
By Proposition~\ref{AK}, $\overline{\Sp_4}(4)$ is singular. 
Since the barycenter 
$$
\textbf{bar}_{DH}(\Delta_{4}) = {\scriptstyle\left(\frac{53741124025}{47717371328}, \frac{192699595}{67780357}\right)} \approx (1.262, 2.843)
$$
of the moment polytope $\Delta_{4}$ with respect to the Duistermaat--Heckman measure is not in the relative interior of the translated cone $2 \rho + \mathcal C^+$, 
$\overline{\Sp_4}(4)$ does not admit any K\"{a}hler--Einstein metric.
\end{proof}

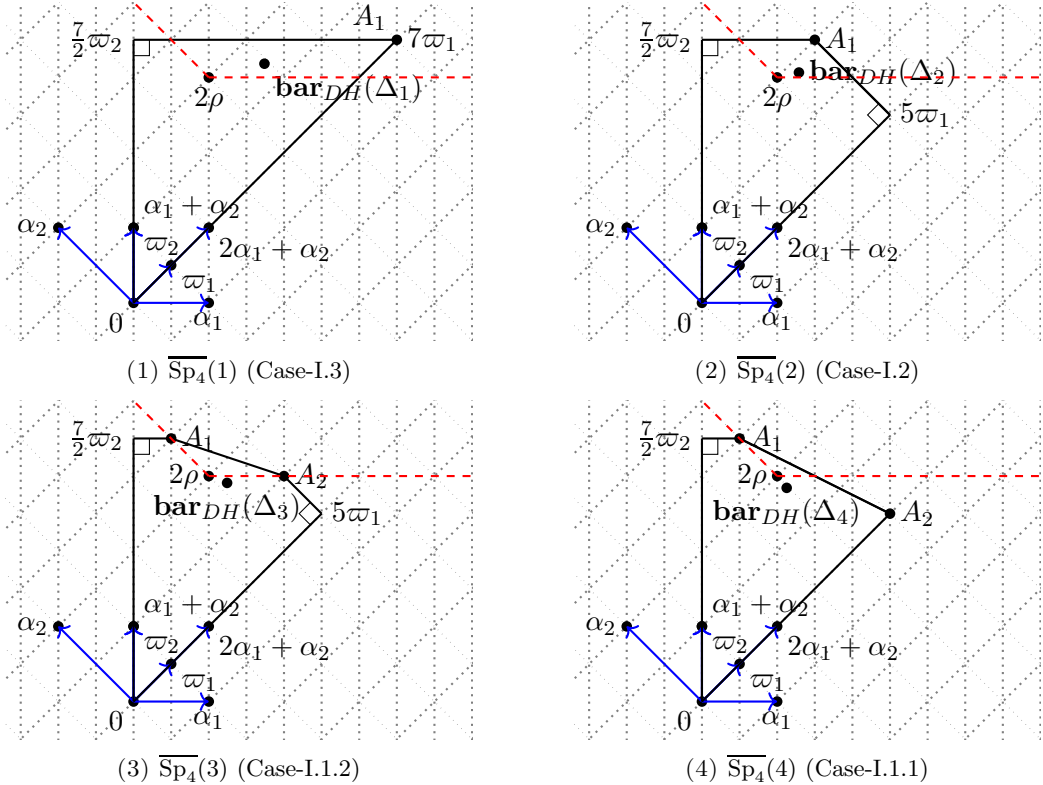
\begin{figure}[h]

\begin{minipage}[b]{.45 \textwidth}
 \centering

\begin{tikzpicture}
\clip (-1.7,-0.5) rectangle (4.5,4); 

\coordinate (pi1) at (0.5,0.5);
\coordinate (pi2) at (0,1);
\coordinate (v0) at ($3.5*(pi2)$);
\coordinate (v2) at ($7*(pi1)$);
\coordinate (a1) at (1,0);
\coordinate (a2) at (-1,1);
\coordinate (a3) at ($2*(a1)+(a2)$);
\coordinate (barycenter) at (512/273-32/9,64/9);

\coordinate (Origin) at (0,0);
\coordinate (asum) at ($(a1)+(a2)$);
\coordinate (2rho) at (1,3);

\coordinate (barycenter) at (1.74005681818182, 3.18181818181818);

\foreach \x  in {-8,-7.5,...,9}{
  \draw[help lines,thick,dotted]
    (\x,-8) -- (\x,9);
}


\begin{scope}[y=(45:1)]
\foreach \y  in {-8,-7,...,9}{
  \draw[help lines,thick,dotted]
   [rotate=45]  (-8,\y) -- (9,\y) ;
}

\foreach \y  in {-8,-7,...,9}{
  \draw[help lines,dotted]
   [rotate=-45]  (-8,\y) -- (9,\y) ;
}
\end{scope}

\fill (Origin) circle (2pt) node[below left] {0};
\fill (pi1) circle (2pt) node[below right] {$\varpi_1$};
\fill (pi2) circle (2pt) node[below right] {$\varpi_2$};
\fill (a1) circle (2pt) node[below] {$\alpha_1$};
\fill (a2) circle (2pt) node[left] {$\alpha_2$};
\fill (a3) circle (2pt) node[below right] {$2\alpha_1+\alpha_2$};

\fill (asum) circle (2pt) node[above right] {$\alpha_1+\alpha_2$};
\fill (2rho) circle (2pt) node[below] {$2\rho$};

\fill (v0) node[left] {$\frac{7}{2}\varpi_2$};
\fill (v2) circle (2pt) node[above left] {$A_1$};
\fill (v2) node[right] {$7\varpi_1$};

\fill (barycenter) circle (2pt) node[below right] {$\textbf{bar}_{DH}(\Delta_{1})$};

\draw[->,blue,thick](Origin)--(pi1);
\draw[->,blue,thick](Origin)--(pi2); 
\draw[->,blue,thick](Origin)--(a1);
\draw[->,blue,thick](Origin)--(a2);
\draw[->,blue,thick](Origin)--(a3); 
\draw[->,blue,thick](Origin)--(asum); 

\draw[thick](Origin)--(v0);
\draw[thick](v0)--(v2);
\draw[thick](Origin)--(v2);

\aeMarkRightAngle[size=6pt](Origin,v0,v2)

\draw [shorten >=-4cm, red, thick, dashed] (2rho) to ($(2rho)+(a1)$);
\draw [shorten >=-4cm, red, thick, dashed] (2rho) to ($(2rho)+(a2)$);
\end{tikzpicture} 
\subcaption{$\overline{\Sp_4}(1)$ (Case-I.3)}
\label{Sp-I.3}
\medskip
\end{minipage}
\begin{minipage}[b]{.45 \textwidth}
\centering

\begin{tikzpicture}
\clip (-1.7,-0.5) rectangle (4.5,4); 

\coordinate (pi1) at (0.5,0.5);
\coordinate (pi2) at (0,1);
\coordinate (v0) at ($3.5*(pi2)$);
\coordinate (v1) at ($3*(pi1)+2*(pi2)$);
\coordinate (v2) at ($5*(pi1)$);
\coordinate (a1) at (1,0);
\coordinate (a2) at (-1,1);
\coordinate (a3) at ($2*(a1)+(a2)$);
\coordinate (barycenter) at (512/273-32/9,64/9);

\coordinate (Origin) at (0,0);
\coordinate (asum) at ($(a1)+(a2)$);
\coordinate (2rho) at (1,3);

\coordinate (barycenter) at (1.28922260822824, 3.06463015794532);

\foreach \x  in {-8,-7.5,...,9}{
  \draw[help lines,thick,dotted]
    (\x,-8) -- (\x,9);
}


\begin{scope}[y=(45:1)]
\foreach \y  in {-8,-7,...,9}{
  \draw[help lines,thick,dotted]
   [rotate=45]  (-8,\y) -- (9,\y) ;
}

\foreach \y  in {-8,-7,...,9}{
  \draw[help lines,dotted]
   [rotate=-45]  (-8,\y) -- (9,\y) ;
}
\end{scope}

\fill (Origin) circle (2pt) node[below left] {0};
\fill (pi1) circle (2pt) node[below right] {$\varpi_1$};
\fill (pi2) circle (2pt) node[below right] {$\varpi_2$};
\fill (a1) circle (2pt) node[below] {$\alpha_1$};
\fill (a2) circle (2pt) node[left] {$\alpha_2$};
\fill (a3) circle (2pt) node[below right] {$2\alpha_1+\alpha_2$};

\fill (asum) circle (2pt) node[above right] {$\alpha_1+\alpha_2$};
\fill (2rho) circle (2pt) node[below] {$2\rho$};

\fill (v0) node[left] {$\frac{7}{2}\varpi_2$};
\fill (v1) circle (2pt) node[right] {$A_1$};
\fill (v2) node[right] {$5\varpi_1$};

\fill (barycenter) circle (2pt) node[right] {$\textbf{bar}_{DH}(\Delta_{2})$};

\draw[->,blue,thick](Origin)--(pi1);
\draw[->,blue,thick](Origin)--(pi2); 
\draw[->,blue,thick](Origin)--(a1);
\draw[->,blue,thick](Origin)--(a2);
\draw[->,blue,thick](Origin)--(a3); 
\draw[->,blue,thick](Origin)--(asum); 

\draw[thick](Origin)--(v0);
\draw[thick](v0)--(v1);
\draw[thick](v1)--(v2);
\draw[thick](Origin)--(v2);
\draw[thick](v1)--(v2);

\aeMarkRightAngle[size=6pt](Origin,v0,v1)
\aeMarkRightAngle[size=6pt](v1,v2,Origin)

\draw [shorten >=-4cm, red, thick, dashed] (2rho) to ($(2rho)+(a1)$);
\draw [shorten >=-4cm, red, thick, dashed] (2rho) to ($(2rho)+(a2)$);
\end{tikzpicture} 
\subcaption{$\overline{\Sp_4}(2)$ (Case-I.2)}
\label{Sp-I.2}
\medskip
\end{minipage}

\begin{minipage}[b]{.45 \textwidth}
 \centering

\begin{tikzpicture}
\clip (-1.7,-0.5) rectangle (4.5,4); 

\coordinate (pi1) at (0.5,0.5);
\coordinate (pi2) at (0,1);
\coordinate (v0) at ($3.5*(pi2)$);
\coordinate (v1) at ($1*(pi1)+3*(pi2)$);
\coordinate (v2) at ($4*(pi1)+1*(pi2)$);
\coordinate (v3) at ($5*(pi1)$);
\coordinate (a1) at (1,0);
\coordinate (a2) at (-1,1);
\coordinate (a3) at ($2*(a1)+(a2)$);
\coordinate (barycenter) at (512/273-32/9,64/9);

\coordinate (Origin) at (0,0);
\coordinate (asum) at ($(a1)+(a2)$);
\coordinate (2rho) at (1,3);

\coordinate (barycenter) at (1.24365687508394, 2.90893286866132);

\foreach \x  in {-8,-7.5,...,9}{
  \draw[help lines,thick,dotted]
    (\x,-8) -- (\x,9);
}


\begin{scope}[y=(45:1)]
\foreach \y  in {-8,-7,...,9}{
  \draw[help lines,thick,dotted]
   [rotate=45]  (-8,\y) -- (9,\y) ;
}

\foreach \y  in {-8,-7,...,9}{
  \draw[help lines,dotted]
   [rotate=-45]  (-8,\y) -- (9,\y) ;
}
\end{scope}

\fill (Origin) circle (2pt) node[below left] {0};
\fill (pi1) circle (2pt) node[below right] {$\varpi_1$};
\fill (pi2) circle (2pt) node[below right] {$\varpi_2$};
\fill (a1) circle (2pt) node[below] {$\alpha_1$};
\fill (a2) circle (2pt) node[left] {$\alpha_2$};
\fill (a3) circle (2pt) node[below right] {$2\alpha_1+\alpha_2$};

\fill (asum) circle (2pt) node[above right] {$\alpha_1+\alpha_2$};
\fill (2rho) circle (2pt) node[left] {$2\rho$};

\fill (v0) node[left] {$\frac{7}{2}\varpi_2$};
\fill (v1) circle (2pt) node[right] {$A_1$};
\fill (v2) circle (2pt) node[right] {$A_2$};
\fill (v3) node[right] {$5\varpi_1$};

\fill (barycenter) circle (2pt) node[below] {$\textbf{bar}_{DH}(\Delta_{3})$};

\draw[->,blue,thick](Origin)--(pi1);
\draw[->,blue,thick](Origin)--(pi2); 
\draw[->,blue,thick](Origin)--(a1);
\draw[->,blue,thick](Origin)--(a2);
\draw[->,blue,thick](Origin)--(a3); 
\draw[->,blue,thick](Origin)--(asum); 

\draw[thick](Origin)--(v0);
\draw[thick](v0)--(v1);
\draw[thick](v1)--(v2);
\draw[thick](v2)--(v3);
\draw[thick](Origin)--(v3);

\aeMarkRightAngle[size=6pt](Origin,v0,v1)
\aeMarkRightAngle[size=6pt](v2,v3,Origin)

\draw [shorten >=-4cm, red, thick, dashed] (2rho) to ($(2rho)+(a1)$);
\draw [shorten >=-4cm, red, thick, dashed] (2rho) to ($(2rho)+(a2)$);
\end{tikzpicture} 
\subcaption{$\overline{\Sp_4}(3)$ (Case-I.1.2)}
\label{Sp-I.1.2}

\end{minipage}
\begin{minipage}[b]{.45 \textwidth}
 \centering

\begin{tikzpicture}
\clip (-1.7,-0.5) rectangle (4.5,4); 

\coordinate (pi1) at (0.5,0.5);
\coordinate (pi2) at (0,1);
\coordinate (v0) at ($3.5*(pi2)$);
\coordinate (v1) at ($1*(pi1)+3*(pi2)$);
\coordinate (v2) at ($5*(pi1)$);
\coordinate (a1) at (1,0);
\coordinate (a2) at (-1,1);
\coordinate (a3) at ($2*(a1)+(a2)$);
\coordinate (barycenter) at (512/273-32/9,64/9);

\coordinate (Origin) at (0,0);
\coordinate (asum) at ($(a1)+(a2)$);
\coordinate (2rho) at (1,3);

\coordinate (barycenter) at (1.12623815037073, 2.84300059086440);

\foreach \x  in {-8,-7.5,...,9}{
  \draw[help lines,thick,dotted]
    (\x,-8) -- (\x,9);
}


\begin{scope}[y=(45:1)]
\foreach \y  in {-8,-7,...,9}{
  \draw[help lines,thick,dotted]
   [rotate=45]  (-8,\y) -- (9,\y) ;
}

\foreach \y  in {-8,-7,...,9}{
  \draw[help lines,dotted]
   [rotate=-45]  (-8,\y) -- (9,\y) ;
}
\end{scope}

\fill (Origin) circle (2pt) node[below left] {0};
\fill (pi1) circle (2pt) node[below right] {$\varpi_1$};
\fill (pi2) circle (2pt) node[below right] {$\varpi_2$};
\fill (a1) circle (2pt) node[below] {$\alpha_1$};
\fill (a2) circle (2pt) node[left] {$\alpha_2$};
\fill (a3) circle (2pt) node[below right] {$2\alpha_1+\alpha_2$};

\fill (asum) circle (2pt) node[above right] {$\alpha_1+\alpha_2$};
\fill (2rho) circle (2pt) node[left] {$2\rho$};

\fill (v0) node[left] {$\frac{7}{2}\varpi_2$};
\fill (v1) circle (2pt) node[right] {$A_1$};
\fill (v2) circle (2pt) node[right] {$A_2$};

\fill (barycenter) circle (2pt) node[below] {$\textbf{bar}_{DH}(\Delta_4)$};

\draw[->,blue,thick](Origin)--(pi1);
\draw[->,blue,thick](Origin)--(pi2); 
\draw[->,blue,thick](Origin)--(a1);
\draw[->,blue,thick](Origin)--(a2);
\draw[->,blue,thick](Origin)--(a3); 
\draw[->,blue,thick](Origin)--(asum); 

\draw[thick](Origin)--(v0);
\draw[thick](v0)--(v1);
\draw[thick](v1)--(v2);
\draw[thick](Origin)--(v2);
\draw[thick](v1)--(v2);

\aeMarkRightAngle[size=6pt](Origin,v0,v1)

\draw [shorten >=-4cm, red, thick, dashed] (2rho) to ($(2rho)+(a1)$);
\draw [shorten >=-4cm, red, thick, dashed] (2rho) to ($(2rho)+(a2)$);
\end{tikzpicture} 
\subcaption{$\overline{\Sp_4}(4)$ (Case-I.1.1)}
\label{Sp-I.1.1}

\end{minipage}

\caption{Moment polytopes of Gorenstein Fano compactifications of $\Sp_4(\mathbb{C})$.}
\label{Sp-polytope}

\end{figure}

\clearpage

\subsubsection{Gorenstein Fano group compactification of $\PSp_4(\mathbb C) \cong \SO_5(\mathbb{C})$}

As $\PSp_4(\mathbb C)$ is of adjoint type, 
the spherical weight lattice $\mathcal M$ is spanned by $\alpha_1 = (1, 0)$ and $\alpha_2=(-1, 1)$, and  
its dual lattice $\mathcal N$ is spanned by $\varpi_1^{\vee}=(1,1)$ and $\varpi_2^{\vee}=(0, 1)$.
The Weyl walls are given by $W_1=\{y=x\}$ and $W_2=\{x=0\}$.
The sum of positive roots is $2\rho=(1,3)$.

\begin{theorem}
There are four Gorenstein Fano equivariant compactifications of $\SO_5(\mathbb{C})$: two smooth compactifications and two singular compactifications. 
Their moment polytopes are given in the following Table~\ref{table;SO5} and Figure~\ref{SO-polytope}. 
They all admit singular K\"{a}hler--Einstein metrics. 
{\rm 
\begin{table}[h]
\begin{tabular}{|c|c|l|l|l|c|c|}\hline
$X$ & $\Delta(K_X^{-1})$ & 	Case & Edges (except Weyl walls)  & Vertices	&	Smoothness & 	KE \\
\hline
$\overline{\SO_5}(1)$ & $\Delta_1$	&	I.2 &	$y=4$  & $(0,4), (4,4)$	& smooth	&	Yes 
\\ \hline
$\overline{\SO_5}(2)$ & $\Delta_2$	&	I.1 &	$y=4,\ x+y=5$  & $(0,4), (1,4), \left(\frac{5}{2},\frac{5}{2}\right)$	& smooth	&	Yes	
\\ \hline
$\overline{\SO_5}(3)$ & $\Delta_3$	&	II.1	&	$x+y=5$  & $(0,5), \left(\frac{5}{2},\frac{5}{2}\right)$	& singular	&	Yes 
\\ \hline
$\overline{\SO_5}(4)$ & $\Delta_4$	&	II.2	&	$x+2y=8,\ x+y=5$  & $(0,4),(2,3), \left(\frac{5}{2},\frac{5}{2}\right)$	& singular	&	Yes  
\\ \hline
\end{tabular}
\caption{Gorenstein Fano equivariant compactifications of $\SO_5(\mathbb{C})$.}
\label{table;SO5}
\end{table}}
\end{theorem}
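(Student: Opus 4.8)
The plan is to follow the three-step scheme already used above for $\SL_3(\mathbb C)$, $\PSL_3(\mathbb C)$ and $\Sp_4(\mathbb C)$: enumerate the admissible moment polytopes via Corollary~\ref{gorenstein polytope}, decide smoothness with Proposition~\ref{AK}, and test the barycentric criterion of Theorem~\ref{KE criterion for Q-Fano group compactifications} against the Duistermaat--Heckman density, which for the realization of $\mathsf C_2$ used here is $x^2(y-x)^2 y^2(x+y)^2\,dx\,dy$ (the same density as in the $\Sp_4(\mathbb C)$ case, since $\PSp_4$ and $\Sp_4$ share their root system).

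For the classification, write a non-Weyl facet $F_i$ of the moment polytope with primitive outer normal $m_i\varpi_1^\vee+n_i\varpi_2^\vee=(m_i,\,m_i+n_i)\in\mathcal N$, so that
\[
F_i=\bigl\{(x,y)\in\mathcal M\otimes\mathbb R:\ m_i x+(m_i+n_i)y=4m_i+3n_i+1\bigr\},
\]
where convexity forces $m_i\ge0$ and $n_i\ge0$ (equivalently, the associated $G$-stable divisor lies in the valuation cone). Starting from the facet $F_1$ meeting the Weyl wall $W_2=\{x=0\}$, split into \textbf{Case~I}, $F_1\perp W_2$ --- which forces $(m_1,n_1)=(0,1)$, $F_1=\{y=4\}$ and $F_1\cap W_2=(0,4)$ --- and \textbf{Case~II}, $F_1$ not orthogonal to $W_2$, so that $F_1\cap W_2$ is itself a lattice vertex. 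In each branch the next vertex $A_1=F_1\cap F_2$ must be a lattice point, and convexity and primitivity together with the Gagliardi--Hofscheier reflexivity condition of \cite{GH15} (which isolates the polytopes that actually arise as moment polytopes of Gorenstein Fano embeddings) leave only finitely many continuations; one iterates until the polygon closes along $W_1=\{y=x\}$. Case~I then yields $\Delta_1$ (with $F_1$ running directly to $W_1$) and $\Delta_2$ (one further facet $\{x+y=5\}$), while Case~II --- nonempty here, in contrast to $\Sp_4(\mathbb C)$, because the integrality condition at $F_1\cap W_2$ is satisfied for $(m_1,n_1)=(1,0)$ and $(1,1)$ --- yields $\Delta_3$ and $\Delta_4$. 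Discarding isomorphic duplicates leaves exactly the four polytopes of Table~\ref{table;SO5} and Figure~\ref{SO-polytope}.

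For smoothness, form $\Delta^{toric}=\bigcup_{w\in W}w\Delta$ for the order-eight Weyl group $W$ of $\mathsf B_2$. For $\Delta_2$ the polytope $\Delta^{toric}$ is Delzant and has no vertex on a Weyl wall, so $\overline{\SO_5}(2)$ is smooth by Proposition~\ref{AK}(2); it is the wonderful compactification of $\SO_5(\mathbb C)$, obtained from $\overline{\SO_5}(1)$ by blowing up the closed orbit. The variety $\overline{\SO_5}(1)$ is smooth as well --- its $\Delta^{toric}$ is Delzant, and the smoothness at the two $T$-fixed points lying on Weyl walls is checked directly (or read off from the identification of $\overline{\SO_5}(1)$ as in Ruzzi's classification \cite{Ruzzi2010}). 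For $\Delta_3$ and $\Delta_4$, the vertex on $W_2$ --- namely $(0,5)$, resp.\ $(0,4)$ --- together with the color attached to $W_2$ gives a colored cone spanned by $(1,1),(2,0)$, resp.\ by $(1,2),(2,0)$, hence of index $2$, resp.\ $4$, in $\mathcal N$; equivalently $\Delta^{toric}$ fails the Delzant condition there, so $\overline{\SO_5}(3)$ and $\overline{\SO_5}(4)$ are singular by Proposition~\ref{AK}(1).

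Finally, for the K\"{a}hler--Einstein property, note $2\rho=(1,3)$ and that $\mathcal C^+$ is generated by $(1,0)$ and $(-1,1)$, so the relative interior of $2\rho+\mathcal C^+$ is $\{(x,y):y>3,\ x+y>4\}$. For each of the four polygons one computes $\textbf{bar}_{DH}(\Delta_i)$ by integrating $x$ and $y$ against $x^2(y-x)^2 y^2(x+y)^2$ over $\Delta_i$ and dividing by the total Duistermaat--Heckman mass, and checks that the result lies in this open region; Theorem~\ref{KE criterion for Q-Fano group compactifications} then produces the (singular) K\"{a}hler--Einstein metric in all four cases, which is the claim. I expect the main obstacle to be the bookkeeping in the classification step --- confirming that every Gorenstein polygon compatible with Corollary~\ref{gorenstein polytope} and the reflexivity of \cite{GH15} has been listed, and that no two of the four are isomorphic as bi-equivariant compactifications --- rather than the barycenter computations, which are routine polynomial integrations over polygons.
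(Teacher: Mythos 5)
Your proposal follows the same three-step scheme as the paper's proof --- the identical facet parametrization $F_i=\{m_ix+(m_i+n_i)y=4m_i+3n_i+1\}$ with the Case I/II split at the wall $W_2$, smoothness decided via Proposition~\ref{AK} (resp.\ Ruzzi's identification for $\overline{\SO_5}(1)$), and the barycenter test of Theorem~\ref{KE criterion for Q-Fano group compactifications} against the $\mathsf{C_2}$ Duistermaat--Heckman density --- and all intermediate claims check out, the paper merely shortcutting the K\"ahler--Einstein verification for $\overline{\SO_5}(1)$ by identifying it with the homogeneous spinor variety $\mathbb S_5$ where you would instead check its barycenter (which also works). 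The only slip is cosmetic: the $G$-stable rays in your colored cones should be the antidominant images $\rho(E)=(-1,-1)$ and $(-1,-2)$ rather than $(1,1)$ and $(1,2)$, which changes neither the index computation nor the Delzant-failure argument you actually invoke.
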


\begin{proof}
Let $m_i\varpi_1^{\vee}+n_i\varpi_2^{\vee}=(m_i,m_i+n_i)\in\mathcal N$ be the primitive outer normal vector of the facet
$$
F_i=\{m_ix+(m_i+n_i)y=4m_i+3n_i+1\},
$$
where $m_i\geq m_i+n_i\geq0$ by the convexity of the polytope.

Let $F_1$ be the facet which intersects the Weyl wall $W_2$, then we have two cases:
\begin{itemize}
\item {\bf Case-I:} $F_1\perp W_2$ $\Rightarrow$ $(m_1,n_1)=(0,1)$ so that $F_1=\{y=4\}$, $F_1\cap W_2=(0,4)$.\\
Note that $F_1\cap F_2=:A_1=\Big(\frac{-n_2+1}{m_2},4\Big)\in \mathcal M$.\\ $\Rightarrow$ $A_1=5\alpha_1+4\alpha_2$ or $6\alpha_1+4\alpha_2$ or $7\alpha_1+4\alpha_2$ or $8\alpha_1+4\alpha_2$.\\
If $A_1=6\alpha_1+4\alpha_2$ or $7\alpha_1+4\alpha_2$, there is no lattice polytope.
\begin{itemize}
\item {\bf Case-I.1:} $A_1=5\alpha_1+4\alpha_2=(1,4)$ 
$\Rightarrow$ $m_2+n_2=1$\\ $\Rightarrow$ $F_2=\{m_2x+y=m_2+4\}$\\
Convexity and primitive condition $\Rightarrow$ $m_2=1$ so that $ F_2=\{x+y=5\}$\\ {\small\bf ($\Delta_2$,\ Figure ~\ref{SO-I.1})}. 
\item {\bf Case-I.2:} $A_1=8\alpha_1+4\alpha_2=(4,4)$ {\small\bf ($\Delta_1$,\ Figure ~\ref{SO-I.2})}. 
\end{itemize}
\item {\bf Case-II:} $F_1\cap W_2=:A_1=\Big(0,\frac{4m_1+3n_1+1}{m_1+n_1}\Big)\in \mathcal M$.\\
By convexity $m_1+n_1\geq m_1>0$ $\Rightarrow$ $A_1=(0,5)$ or $(0,4)$.
\begin{itemize}
\item {\bf Case-II.1:} $A_1=(0,5)$ $\Rightarrow$ $(m_1,n_1)=(1,0)$ $\Rightarrow$ $F_1=\{x+y=5\}\perp W_1$\\ {\small\bf ($\Delta_3$,\ Figure ~\ref{SO-II.1})}. 
\item {\bf Case-II.2:} $A_1=(0,4)$ $\Rightarrow$ $n_1=1$\\ $\Rightarrow$ $F_1=\{m_1x+(m_1+1)y=4(m_1+1)\}$\\
Convexity and primitive conditions $\Rightarrow$ $(m_1,n_1)=(1,1)$ so that $F_1=\{x+2y=8\}$\\
$A_2:=F_1\cap F_2=4\varpi_1+1\varpi_2=(2,3)$ and $F_2=\{x+y=5\}\perp W_1$ {\small\bf ($\Delta_4$,\ Figure ~\ref{SO-II.2})}. 
\end{itemize}
\end{itemize}

(1) Case-I.2. 
The variety $\overline{\SO_5}(1)$ is the 10-dimensional spinor variety $\mathbb S_5 \subset \mathbb P^{15}$ which is one component of the variety parametrizing 5-dimensional isotropic subspaces in the 10-dimensional orthogonal vector space (see \cite[Theorem~5]{Ruzzi2010}). 
As $\overline{\SO_5}(1)$ is a homogeneous variety, it naturally admits a K\"{a}hler--Einstein metric. 

(2) Case-I.1.  
The variety $\overline{\SO_5}(2)$ is the blow-up of the smooth Fano compactification $\overline{\SO_5}(1)$ along the closed orbit corresponding to a vertex $A_1$ in Figure~\ref{SO-I.2}. 
Indeed, $\overline{\SO_5}(2)$ is the wonderful compactification of $\SO_5(\mathbb C)$. 
Since the barycenter 
$$
\textbf{bar}_{DH}(\Delta_{2}) = \left(\frac{6332682925}{5547479872}, \frac{18534175955}{5547479872}\right) \approx (1.415, 3.341)
$$
of the moment polytope $\Delta_{2}$ with respect to the Duistermaat--Heckman measure is in the relative interior of the translated cone $2 \rho + \mathcal C^+$, 
$\overline{\SO_5}(2)$ admit a K\"{a}hler--Einstein metric by Theorem~\ref{KE criterion for Q-Fano group compactifications}.

(3) Case-II.1. 
As the toric polytope $\Delta_{3}^{toric}$ formed by the Weyl group action from the moment polytope $\Delta_3$ is not a Delzant polytope, 
$\overline{\SO_5}(3)$ is singular. 
Since the barycenter 
$$
\textbf{bar}_{DH}(\Delta_{3}) = \left(\frac{725}{704}, \frac{225}{64}\right) \approx (1.030, 3.516)
$$
is in the relative interior of the cone $2 \rho + \mathcal C^+$, 
$\overline{\SO_5}(3)$ admits a singular K\"{a}hler--Einstein metric.

(4) Case-II.2. 
As the toric polytope $\Delta_{4}^{toric}$ formed by the Weyl group action from the moment polytope $\Delta_4$ is not a Delzant polytope, 
$\overline{\SO_5}(4)$ is singular. 
Since the barycenter 
$$
\textbf{bar}_{DH}(\Delta_{4}) = \left(\frac{1110073615}{943350848}, \frac{8612750675}{2830052544}\right) \approx (1.177, 3.043)
$$
is in the relative interior of the cone $2 \rho + \mathcal C^+$, 
$\overline{\SO_5}(4)$ admits a singular K\"{a}hler--Einstein metric.
\end{proof}

\vskip 1em


\begin{figure}[h]

\begin{minipage}[b]{.45 \textwidth}
 \centering

\begin{tikzpicture}
\clip (-1.7,-0.5) rectangle (4.5,4.25); 

\coordinate (pi1) at (0.5,0.5);
\coordinate (pi2) at (0,1);
\coordinate (v0) at ($4*(pi2)$);
\coordinate (v2) at ($8*(pi1)$);
\coordinate (a1) at (1,0);
\coordinate (a2) at (-1,1);
\coordinate (a3) at ($2*(a1)+(a2)$);
\coordinate (barycenter) at (512/273-32/9,64/9);

\coordinate (Origin) at (0,0);
\coordinate (asum) at ($(a1)+(a2)$);
\coordinate (2rho) at (1,3);

\coordinate (barycenter) at (1.98863636363636, 3.63636363636364);

\foreach \x  in {-8,-7,...,9}{
  \draw[help lines,dotted]
    (\x,-8) -- (\x,9);
}

\foreach \x  in {-8.5,-7.5,...,9.5}{
  \draw[help lines,dotted]
    (\x,-8) -- (\x,9);
}

\foreach \x  in {-8,-7,...,9}{
  \draw[help lines,thick,dotted]
    (-8,\x) -- (9,\x);
}

\begin{scope}[y=(45:1)]
\foreach \y  in {-8,-7,...,9}{
  \draw[help lines,dotted]
   [rotate=45]  (-8,\y) -- (9,\y) ;
}
\foreach \y  in {-8,-6,...,8}{
  \draw[help lines,dotted]
   [rotate=45]  (-8,\y) -- (9,\y) ;
}
\end{scope}

\begin{scope}[y=(45:1)]
\foreach \y  in {-8,-7,...,9}{
  \draw[help lines,dotted]
   [rotate=-45]  (-8,\y) -- (9,\y) ;
}
\foreach \y  in {-8,-7,...,8}{
  \draw[help lines,thick,dotted]
   [rotate=-45]  (-8,\y) -- (9,\y) ;
}
\end{scope}

\fill (Origin) circle (2pt) node[below left] {0};
\fill (pi1) circle (2pt) node[below right] {$\varpi_1$};
\fill (pi2) circle (2pt) node[below right] {$\varpi_2$};
\fill (a1) circle (2pt) node[below] {$\alpha_1$};
\fill (a2) circle (2pt) node[left] {$\alpha_2$};
\fill (a3) circle (2pt) node[below right] {$2\alpha_1+\alpha_2$};

\fill (asum) circle (2pt) node[above right] {$\alpha_1+\alpha_2$};
\fill (2rho) circle (2pt) node[below] {$2\rho$};

\fill (v0) node[left] {$4\varpi_2$};
\fill (v2) circle (2pt) node[below] {$A_1$};

\fill (barycenter) circle (2pt) node[below] {$\textbf{bar}_{DH}(\Delta_{1})$};

\draw[->,blue,thick](Origin)--(pi1);
\draw[->,blue,thick](Origin)--(pi2); 
\draw[->,blue,thick](Origin)--(a1);
\draw[->,blue,thick](Origin)--(a2);
\draw[->,blue,thick](Origin)--(a3); 
\draw[->,blue,thick](Origin)--(asum); 

\draw[thick](Origin)--(v0);
\draw[thick](v0)--(v2);
\draw[thick](Origin)--(v2);

\aeMarkRightAngle[size=6pt](Origin,v0,v2)

\draw [shorten >=-4cm, red, thick, dashed] (2rho) to ($(2rho)+(a1)$);
\draw [shorten >=-4cm, red, thick, dashed] (2rho) to ($(2rho)+(a2)$);
\end{tikzpicture} 
\subcaption{$\overline{\SO_5}(1)$ (Case-I.2)}
\label{SO-I.2}
\medskip
\end{minipage}
 \begin{minipage}[b]{.45 \textwidth}
 \centering

\begin{tikzpicture}
\clip (-1.7,-0.5) rectangle (4.5,4.25); 

\coordinate (pi1) at (0.5,0.5);
\coordinate (pi2) at (0,1);
\coordinate (v0) at ($4*(pi2)$);
\coordinate (v1) at ($2*(pi1)+3*(pi2)$);
\coordinate (v2) at ($5*(pi1)$);
\coordinate (a1) at (1,0);
\coordinate (a2) at (-1,1);
\coordinate (a3) at ($2*(a1)+(a2)$);
\coordinate (barycenter) at (512/273-32/9,64/9);

\coordinate (Origin) at (0,0);
\coordinate (asum) at ($(a1)+(a2)$);
\coordinate (2rho) at (1,3);

\coordinate (barycenter) at (1.14154229868651, 3.34100823845224);

\foreach \x  in {-8,-7,...,9}{
  \draw[help lines,dotted]
    (\x,-8) -- (\x,9);
}

\foreach \x  in {-8.5,-7.5,...,9.5}{
  \draw[help lines,dotted]
    (\x,-8) -- (\x,9);
}

\foreach \x  in {-8,-7,...,9}{
  \draw[help lines,thick,dotted]
    (-8,\x) -- (9,\x);
}

\begin{scope}[y=(45:1)]
\foreach \y  in {-8,-7,...,9}{
  \draw[help lines,dotted]
   [rotate=45]  (-8,\y) -- (9,\y) ;
}
\foreach \y  in {-8,-6,...,8}{
  \draw[help lines,dotted]
   [rotate=45]  (-8,\y) -- (9,\y) ;
}
\end{scope}

\begin{scope}[y=(45:1)]
\foreach \y  in {-8,-7,...,9}{
  \draw[help lines,dotted]
   [rotate=-45]  (-8,\y) -- (9,\y) ;
}
\foreach \y  in {-8,-7,...,8}{
  \draw[help lines,thick,dotted]
   [rotate=-45]  (-8,\y) -- (9,\y) ;
}
\end{scope}

\fill (Origin) circle (2pt) node[below left] {0};
\fill (pi1) circle (2pt) node[below right] {$\varpi_1$};
\fill (pi2) circle (2pt) node[below right] {$\varpi_2$};
\fill (a1) circle (2pt) node[below] {$\alpha_1$};
\fill (a2) circle (2pt) node[left] {$\alpha_2$};
\fill (a3) circle (2pt) node[below right] {$2\alpha_1+\alpha_2$};

\fill (asum) circle (2pt) node[above right] {$\alpha_1+\alpha_2$};
\fill (2rho) circle (2pt) node[below] {$2\rho$};

\fill (v0) node[left] {$4\varpi_2$};
\fill (v1) circle (2pt) node[right] {$A_1$};
\fill (v2) node[right] {$5\varpi_1$};

\fill (barycenter) circle (2pt) node[right] {$\textbf{bar}_{DH}(\Delta_{2})$};

\draw[->,blue,thick](Origin)--(pi1);
\draw[->,blue,thick](Origin)--(pi2); 
\draw[->,blue,thick](Origin)--(a1);
\draw[->,blue,thick](Origin)--(a2);
\draw[->,blue,thick](Origin)--(a3); 
\draw[->,blue,thick](Origin)--(asum); 

\draw[thick](Origin)--(v0);
\draw[thick](v0)--(v1);
\draw[thick](v1)--(v2);
\draw[thick](Origin)--(v2);
\draw[thick](v1)--(v2);

\aeMarkRightAngle[size=6pt](Origin,v0,v1)
\aeMarkRightAngle[size=6pt](v1,v2,Origin)

\draw [shorten >=-4cm, red, thick, dashed] (2rho) to ($(2rho)+(a1)$);
\draw [shorten >=-4cm, red, thick, dashed] (2rho) to ($(2rho)+(a2)$);
\end{tikzpicture} 
\subcaption{$\overline{\SO_5}(2)$ (Case-I.1)}
\label{SO-I.1}
\medskip
\end{minipage}

\begin{minipage}[b]{.45 \textwidth}
 \centering

\begin{tikzpicture}
\clip (-1.7,-0.5) rectangle (4.5,5.2); 

\coordinate (pi1) at (0.5,0.5);
\coordinate (pi2) at (0,1);
\coordinate (v0) at ($5*(pi2)$);
\coordinate (v1) at ($5*(pi1)$);
\coordinate (a1) at (1,0);
\coordinate (a2) at (-1,1);
\coordinate (a3) at ($2*(a1)+(a2)$);
\coordinate (barycenter) at (512/273-32/9,64/9);

\coordinate (Origin) at (0,0);
\coordinate (asum) at ($(a1)+(a2)$);
\coordinate (2rho) at (1,3);

\coordinate (barycenter) at (1.02982954545455, 3.515625);

\foreach \x  in {-8,-7,...,9}{
  \draw[help lines,dotted]
    (\x,-8) -- (\x,9);
}

\foreach \x  in {-8.5,-7.5,...,9.5}{
  \draw[help lines,dotted]
    (\x,-8) -- (\x,9);
}

\foreach \x  in {-8,-7,...,9}{
  \draw[help lines,thick,dotted]
    (-8,\x) -- (9,\x);
}

\begin{scope}[y=(45:1)]
\foreach \y  in {-8,-7,...,9}{
  \draw[help lines,dotted]
   [rotate=45]  (-8,\y) -- (9,\y) ;
}
\foreach \y  in {-8,-6,...,8}{
  \draw[help lines,dotted]
   [rotate=45]  (-8,\y) -- (9,\y) ;
}
\end{scope}

\begin{scope}[y=(45:1)]
\foreach \y  in {-8,-7,...,9}{
  \draw[help lines,dotted]
   [rotate=-45]  (-8,\y) -- (9,\y) ;
}
\foreach \y  in {-8,-7,...,8}{
  \draw[help lines,thick,dotted]
   [rotate=-45]  (-8,\y) -- (9,\y) ;
}
\end{scope}

\fill (Origin) circle (2pt) node[below left] {0};
\fill (pi1) circle (2pt) node[below right] {$\varpi_1$};
\fill (pi2) circle (2pt) node[below right] {$\varpi_2$};
\fill (a1) circle (2pt) node[below] {$\alpha_1$};
\fill (a2) circle (2pt) node[left] {$\alpha_2$};
\fill (a3) circle (2pt) node[below right] {$2\alpha_1+\alpha_2$};

\fill (asum) circle (2pt) node[above right] {$\alpha_1+\alpha_2$};
\fill (2rho) circle (2pt) node[below] {$2\rho$};

\fill (v0) circle (2pt) node[right] {$A_1$};
\fill (v1) node[right] {$5\varpi_1$};

\fill (barycenter) circle (2pt) node[right] {$\textbf{bar}_{DH}(\Delta_{3})$};

\draw[->,blue,thick](Origin)--(pi1);
\draw[->,blue,thick](Origin)--(pi2); 
\draw[->,blue,thick](Origin)--(a1);
\draw[->,blue,thick](Origin)--(a2);
\draw[->,blue,thick](Origin)--(a3); 
\draw[->,blue,thick](Origin)--(asum); 

\draw[thick](Origin)--(v0);
\draw[thick](v0)--(v1);
\draw[thick](Origin)--(v1);

\aeMarkRightAngle[size=6pt](v0,v1,Origin)

\draw [shorten >=-4cm, red, thick, dashed] (2rho) to ($(2rho)+(a1)$);
\draw [shorten >=-4cm, red, thick, dashed] (2rho) to ($(2rho)+(a2)$);
\end{tikzpicture} 
\subcaption{$\overline{\SO_5}(3)$ (Case-II.1)}
\label{SO-II.1}

\end{minipage}
\begin{minipage}[b]{.45 \textwidth}
 \centering

\begin{tikzpicture}
\clip (-1.7,-0.5) rectangle (4.5,5.2); 

\coordinate (pi1) at (0.5,0.5);
\coordinate (pi2) at (0,1);
\coordinate (v0) at ($4*(pi2)$);
\coordinate (v1) at ($4*(pi1)+(pi2)$);
\coordinate (v2) at ($5*(pi1)$);
\coordinate (a1) at (1,0);
\coordinate (a2) at (-1,1);
\coordinate (a3) at ($2*(a1)+(a2)$);
\coordinate (barycenter) at (512/273-32/9,64/9);

\coordinate (Origin) at (0,0);
\coordinate (asum) at ($(a1)+(a2)$);
\coordinate (2rho) at (1,3);

\coordinate (barycenter) at (1.17673463415385, 3.04331829218504);

\foreach \x  in {-8,-7,...,9}{
  \draw[help lines,dotted]
    (\x,-8) -- (\x,9);
}

\foreach \x  in {-8.5,-7.5,...,9.5}{
  \draw[help lines,dotted]
    (\x,-8) -- (\x,9);
}

\foreach \x  in {-8,-7,...,9}{
  \draw[help lines,thick,dotted]
    (-8,\x) -- (9,\x);
}

\begin{scope}[y=(45:1)]
\foreach \y  in {-8,-7,...,9}{
  \draw[help lines,dotted]
   [rotate=45]  (-8,\y) -- (9,\y) ;
}
\foreach \y  in {-8,-6,...,8}{
  \draw[help lines,dotted]
   [rotate=45]  (-8,\y) -- (9,\y) ;
}
\end{scope}

\begin{scope}[y=(45:1)]
\foreach \y  in {-8,-7,...,9}{
  \draw[help lines,dotted]
   [rotate=-45]  (-8,\y) -- (9,\y) ;
}
\foreach \y  in {-8,-7,...,8}{
  \draw[help lines,thick,dotted]
   [rotate=-45]  (-8,\y) -- (9,\y) ;
}
\end{scope}

\fill (Origin) circle (2pt) node[below left] {0};
\fill (pi1) circle (2pt) node[below right] {$\varpi_1$};
\fill (pi2) circle (2pt) node[below right] {$\varpi_2$};
\fill (a1) circle (2pt) node[below] {$\alpha_1$};
\fill (a2) circle (2pt) node[left] {$\alpha_2$};
\fill (a3) circle (2pt) node[below right] {$2\alpha_1+\alpha_2$};

\fill (asum) circle (2pt) node[above right] {$\alpha_1+\alpha_2$};
\fill (2rho) circle (2pt) node[below] {$2\rho$};

\fill (v0) circle (2pt) node[above] {$A_1$};
\fill (v1) circle (2pt) node[below] {$A_2$};
\fill (v2) node[right] {$5\varpi_1$};

\fill (barycenter) circle (2pt) node[above right] {$\textbf{bar}_{DH}(\Delta_{4})$};

\draw[->,blue,thick](Origin)--(pi1);
\draw[->,blue,thick](Origin)--(pi2); 
\draw[->,blue,thick](Origin)--(a1);
\draw[->,blue,thick](Origin)--(a2);
\draw[->,blue,thick](Origin)--(a3); 
\draw[->,blue,thick](Origin)--(asum); 

\draw[thick](Origin)--(v0);
\draw[thick](v0)--(v1);
\draw[thick](v1)--(v2);
\draw[thick](Origin)--(v2);

\aeMarkRightAngle[size=6pt](v1,v2,Origin)

\draw [shorten >=-4cm, red, thick, dashed] (2rho) to ($(2rho)+(a1)$);
\draw [shorten >=-4cm, red, thick, dashed] (2rho) to ($(2rho)+(a2)$);
\end{tikzpicture} 
\subcaption{$\overline{\SO_5}(4)$ (Case-II.2)}
\label{SO-II.2}

\end{minipage}
\caption{Moment polytopes of Gorenstein Fano compactifications of $\SO_5(\mathbb{C})$.}
\label{SO-polytope}
\end{figure}
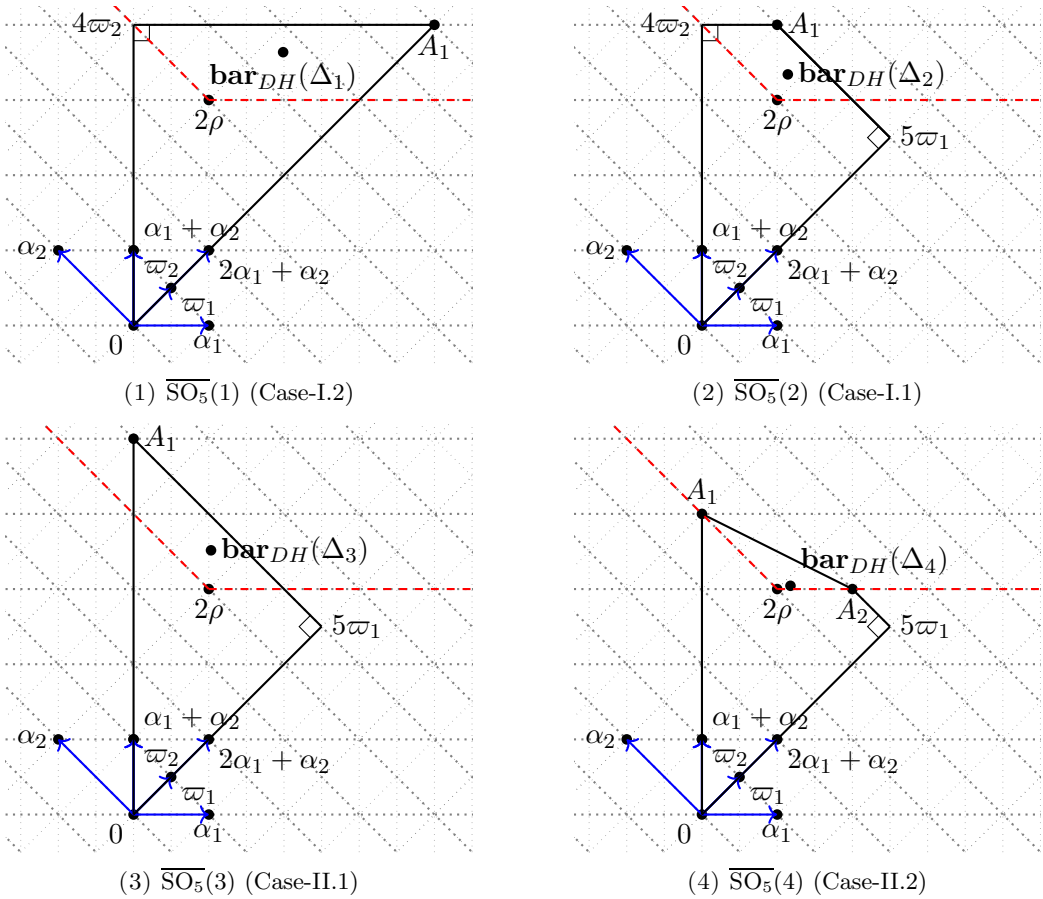

\newpage
\subsection{Gorenstein Fano group compactification of $G_2$} 
As the root lattice and weight lattice of $G_2$ coincide, the simply-connected complex Lie group of type $G_2$ is of adjoint type. 
Thus, the spherical weight lattice $\mathcal M $ is spanned by the fundamental weights $\varpi_1=\Big(\frac{1}{2},\frac{\sqrt{3}}{2}\Big)$ and $\varpi_2=(0,\sqrt{3})$, and  
its dual lattice $\mathcal N $ is spanned by $\alpha_1^{\vee}=(2, 0)$ and $\alpha_2^{\vee}=(-1, \frac{1}{\sqrt{3}})$. 
The Weyl walls are given by $W_1=\{y=\sqrt{3}x\}$ and $W_2=\{x=0\}$.
The sum of positive roots is $2\rho=(1,3\sqrt{3})$.
Note that the complex Lie group $G_2$ has 6 positive roots:
 \begin{align*}
 \Phi^+ & = \{ \alpha_1, \alpha_2, \alpha_1 + \alpha_2, 2 \alpha_1 + \alpha_2, 3\alpha_1 + \alpha_2, 3\alpha_1 + 2\alpha_2 \} \\
 & = \Big\{ (1, 0), \Big(-\frac{3}{2}, \frac{\sqrt{3}}{2} \Big), \Big(-\frac{1}{2}, \frac{\sqrt{3}}{2} \Big), \Big(\frac{1}{2}, \frac{\sqrt{3}}{2} \Big), \Big(\frac{3}{2}, \frac{\sqrt{3}}{2} \Big),  (0, \sqrt{3}) \Big\}.
 \end{align*}
Thus, the Duistermaat--Heckman measure is given as   
\[
\prod_{\alpha \in \Phi^+} \kappa(\alpha, p)^2 \, dp 
= x^2 \Big(-\frac{3}{2}x + \frac{\sqrt{3}}{2}y\Big)^2 \Big(-\frac{1}{2}x + \frac{\sqrt{3}}{2}y\Big)^2 \Big(\frac{1}{2}x + \frac{\sqrt{3}}{2}y\Big)^2 \Big(\frac{3}{2}x + \frac{\sqrt{3}}{2}y\Big)^2 (\sqrt{3}y)^2 \, dxdy.
\]

\begin{theorem}
There are only two Gorenstein Fano equivariant compactifications of $G_2$. 
Their moment polytopes are given in the following Table~\ref{table;G2} and Figure~\ref{G-polytope}. 
They all are smooth and admit K\"{a}hler--Einstein metrics. 
{\rm \small
\begin{table}[h]
\begin{tabular}{|c|c|l|l|l|c|c|}\hline
		$X$ & $\Delta(K_X^{-1})$ & 	Case & Edges (except Weyl walls)  & Vertices  &	Smoothness &	KE \\
\hline
$\overline{G_2}(1)$	& $\Delta_1$ &	I.1	&	$y=\frac{7}{2}\sqrt{3},\ x+\sqrt{3}y=11$  & $\Big(0,\frac{7}{2}\sqrt{3}\Big), \Big(\frac{1}{2},\frac{7}{2}\sqrt{3}\Big), \Big(\frac{11}{4},\frac{11}{4}\sqrt{3}\Big)$	& smooth	& Yes
\\ \hline
$\overline{G_2}(2)$	&	$\Delta_2$ & I.2	&	$y=\frac{7}{2}\sqrt{3}$  &	$\Big(\frac{7}{2},\frac{7}{2}\sqrt{3}\Big)$ 	& smooth	& Yes 
\\ \hline
\end{tabular}
\caption{Gorenstein Fano equivariant compactifications of $G_2$.}
\label{table;G2}
\vskip -1em
\end{table}}
\end{theorem}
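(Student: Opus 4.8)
The plan is to carry out, for $G = G_2$, the same three steps already used above for $\SL_3$, $\PSL_3$, $\Sp_4$ and $\SO_5$: first enumerate all admissible moment polytopes via Corollary~\ref{gorenstein polytope}, then decide smoothness with Proposition~\ref{AK}, and finally test the barycenter criterion of Theorem~\ref{KE criterion for Q-Fano group compactifications}.

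For the enumeration, Corollary~\ref{gorenstein polytope} says that the moment polytope $\Delta = \Delta(X, K_X^{-1}) \subset \mathcal M \otimes \R$ of a Gorenstein Fano $(G_2 \times G_2)$-compactification $X$ has the origin among its vertices, the two Weyl walls $W_1 = \{y = \sqrt3\, x\}$ and $W_2 = \{x = 0\}$ among its facets, and every remaining facet $F_i$ on the affine hyperplane cut out by a primitive outer normal $\nu_i = m_i\alpha_1^{\vee} + n_i\alpha_2^{\vee} = \bigl(2m_i - n_i,\ \tfrac{n_i}{\sqrt3}\bigr)$ with $m_i, n_i \ge 0$, that is,
\[
F_i = \Bigl\{ (x,y) : (2m_i - n_i)\, x + \tfrac{n_i}{\sqrt3}\, y = 2m_i + 2n_i + 1 \Bigr\}.
\]
Starting from the facet $F_1$ meeting $W_2$ and splitting into the cases ``$F_1 \perp W_2$'' (which forces $(m_1, n_1) = (1, 2)$ and $F_1 = \{y = \tfrac72\sqrt3\}$) and ``$F_1 \not\perp W_2$'', I would argue exactly as in the $\Sp_4$ and $\SO_5$ cases: convexity of $\Delta$ bounds the admissible slopes $n_i/m_i$, and the integrality constraints of the Gorenstein Fano condition (the vertices where two non-Weyl facets meet must lie in the weight lattice $\mathcal M$) together with primitivity of the $\nu_i$ reduce the problem to a finite case-check, most of whose branches fail to extend to an admissible polytope. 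Since $G_2$ has no nontrivial diagram automorphism, no further identifications are needed, and what remains is exactly the quadrilateral $\Delta_1$ with vertices $0,\ \tfrac72\varpi_2,\ \varpi_1 + 3\varpi_2,\ \tfrac{11}{2}\varpi_1$ and the triangle $\Delta_2$ with vertices $0,\ \tfrac72\varpi_2,\ 7\varpi_1$, as recorded in Table~\ref{table;G2}.

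For smoothness: the only vertex of $\Delta_1$ interior to the dominant chamber is $\varpi_1 + 3\varpi_2$, and I would check that its Weyl orbit constitutes all the vertices of $\Delta_1^{toric}$ (the Weyl images of $\tfrac72\varpi_2$ and $\tfrac{11}{2}\varpi_1$ become interior points of edges, and $0$ becomes an interior point of $\Delta_1^{toric}$), that none of them lies on a Weyl wall, and that the primitive generators $-2\varpi_1 + \varpi_2$ and $3\varpi_1 - 2\varpi_2$ of the two edges of $\Delta_1$ at $\varpi_1 + 3\varpi_2$ form a $\Z$-basis of $\mathcal M$ (the relevant $2\times 2$ determinant is $1$); hence $\overline{G_2}(1)$ is smooth by Proposition~\ref{AK}(2). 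Equivalently, $\overline{G_2}(1)$, being the unique member of our list with two $G$-stable prime divisors, is the wonderful compactification of $G_2$ and so is smooth by De~Concini--Procesi~\cite{dCP83}. For $\Delta_2$ all three vertices lie on Weyl walls, so Proposition~\ref{AK}(2) does not apply; instead $\overline{G_2}(2)$, having a single $G$-stable prime divisor and therefore Picard number one, is the smooth projective symmetric variety of Picard number one of type $G_2$ classified by Ruzzi~\cite{Ruzzi2010}, and is smooth on that account.

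Finally, for the K\"ahler--Einstein property, by Theorem~\ref{KE criterion for Q-Fano group compactifications} it suffices to show $\textbf{bar}_{DH}(\Delta_i) \in 2\rho + \Xi$, where $2\rho = (1, 3\sqrt3)$ and $\Xi$ is the relative interior of $\mathcal C^+ = \Cone(\alpha_1, \alpha_2)$, the cone spanned by the simple roots $\alpha_1 = (1, 0)$ and $\alpha_2 = \bigl(-\tfrac32, \tfrac{\sqrt3}{2}\bigr)$ (all six positive roots lie in it). For each of the triangle $\Delta_2$ and the quadrilateral $\Delta_1$ I would subdivide the polygon into triangles, integrate the degree-$12$ Duistermaat--Heckman density displayed above together with its products with $x$ and with $y$ by the standard polynomial-moment formulas, form the barycenter, and check that it lies strictly inside the translated cone $2\rho + \Xi$. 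The main obstacle is precisely this computation: the $G_2$ density is a product of six squared linear forms, so one integrates a degree-$13$ polynomial over a planar polygon and the barycenters come out as ratios of sizeable integers; moreover, unlike the $A_2$-case $\overline{\SL_3}(1)$, there is no diagram-automorphism symmetry of the $G_2$ root system to force the barycenter onto the ray $\R_{>0}\cdot 2\rho$, so the containment $\textbf{bar}_{DH}(\Delta_i) \in 2\rho + \Xi$ must be verified by an honest (if routine) calculation for each of the two polytopes.
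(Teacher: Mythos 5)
Your plan follows the paper's proof essentially verbatim: the same facet parametrization by primitive normals $m_i\alpha_1^{\vee}+n_i\alpha_2^{\vee}$ with the Case I/II split on $F_1\perp W_2$ yields exactly $\Delta_1$ and $\Delta_2$; smoothness is settled as in the paper ($\overline{G_2}(1)$ is the wonderful compactification, $\overline{G_2}(2)$ the Picard-number-one symmetric variety, i.e.\ the double Cayley Grassmannian), with your direct Delzant verification at $\varpi_1+3\varpi_2$ (determinant $1$ for $-2\varpi_1+\varpi_2$, $3\varpi_1-2\varpi_2$) a correct supplement to Proposition~\ref{AK}; and the K\"ahler--Einstein claim is reduced, as in the paper, to checking $\textbf{bar}_{DH}(\Delta_i)\in 2\rho+\Xi$. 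The only steps you leave implicit --- discarding the branches $A_1=3\varpi_1+2\varpi_2$, $5\varpi_1+\varpi_2$ and Case II via the lattice/primitivity constraints, and the explicit Duistermaat--Heckman integrals giving barycenters $\approx(1.193,\,3.022\sqrt{3})$ and $\bigl(\tfrac{139601}{79360},\,\tfrac{49}{15}\sqrt{3}\bigr)$ --- are precisely the routine computations the paper carries out.
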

\begin{proof}
Let $m_i\alpha_1^{\vee}+n_i\alpha_2^{\vee}=\Big(2m_i-n_i,\frac{n_i}{\sqrt{3}}\Big)\in\mathcal N$ be the primitive outer normal vector of the facet 
$$
F_i=\left\{(2m_i-n_i)x+\frac{n_i}{\sqrt{3}}y=2m_i+2n_i+1 \right\},
$$
where $\frac{n_i}{3}\geq2m_i-n_i\geq0$ by the convexity of the polytope.

Let $F_1$ be the facet which intersects the Weyl wall $W_2$, we have two cases:
\begin{itemize}
\item {\bf Case-I:} 
$F_1\perp W_2$ $\Rightarrow$ $F_1=\Big\{y=\frac{7}{2}\sqrt{3}\Big\}$\\ 
$\Rightarrow$ $A_1=\varpi_1+3\varpi_2$ or $3\varpi_1+2\varpi_2$ or $5\varpi_1+\varpi_2$ or $7\varpi_1$\\
If $A_1=3\varpi_1+2\varpi_2$ or $5\varpi_1+\varpi_2$, there is no lattice polytope.
\begin{itemize}
\item {\bf Case-I.1}: $A_1=\varpi_1+3\varpi_2=\Big(\frac{1}{2},\frac{7}{2}\sqrt{3}\Big)$ $\Rightarrow$ $(m_2,n_2)=(2,3)$ and $F_2=\{x+\sqrt{3}y=11\}$ {\small\bf($\Delta_1$,\ Figure ~\ref{G-I.1})}
\item {\bf Case-I.2}: $A_1=7\varpi_1=\Big(\frac{7}{2},\frac{7}{2}\sqrt{3}\Big)$ {\small\bf($\Delta_2$,\ Figure ~\ref{G-I.2})}
\end{itemize}
\item {\bf Case-II:} $F_1$ is not orthogonal to $W_2$.\\
$F_1\cap W_2=\Big(0,(\frac{2m_1+2n_1+1}{n_1})\sqrt{3}\Big)\in \mathcal M$ $\Rightarrow$ 
$\frac{4}{3}+\frac{1}{n_1}\geq\frac{2m_1+1}{n_1}>1+\frac{1}{n_1}$ $\Rightarrow$ No lattice polytope
\end{itemize}

(1) Case-I.1. 
This variety $\overline{G_2}(1)$ is the wonderful compactification of $G_2$. 
Computation shows $$\text{Vol}_{DH}(\Delta_{1}) = 
\displaystyle \int_{0}^{\frac{11 \sqrt{3}}{4}} \int_{0}^{\frac{y}{\sqrt{3}}} \prod_{\alpha \in \Phi^+} \kappa(\alpha, p )^2 \, dp  + \int_{\frac{11 \sqrt{3}}{4}}^{\frac{7 \sqrt{3}}{2}} \int_{0}^{-\sqrt{3}y+11} \prod_{\alpha \in \Phi^+} \kappa(\alpha, p )^2 \, dp
=\frac{107945390367459}{8830976}\sqrt{3}.$$  
This implies that 
$$
\textbf{bar}_{DH}(\Delta_{1}) = \Big(\frac{32567112922303267}{27292859194142720}, \frac{247470028273390111}{81878577582428160} \sqrt{3}\Big) \approx (1.193, 3.022 \times \sqrt{3}).
$$
Since it is in the relative interior of the translated cone $2 \rho + \mathcal C^+$, $\overline{G_2}(1)$ admits a K\"{a}hler--Einstein metric. 

(2) Case-I.2. 
This variety $\overline{G_2}(2)$ is the smooth Fano compactification of $G_2$ with Picard number one called the \emph{double Cayley Grassmannian} (see \cite{Manivel20}).
In this case, the barycenter 
$$\textbf{bar}_{DH}(\Delta_{2}) = \Big(\frac{139601}{79360}, \frac{49}{15} \sqrt{3}\Big) \approx (1.759, 3.267 \times \sqrt{3})$$
is in the relative interior of the translated cone $2 \rho + \mathcal C^+$ so that $\overline{G_2}(2)$ admits a K\"{a}hler--Einstein metric (see \cite[Section 3.6]{LPY21}). 
\end{proof}

\vskip 1em

\begin{figure}[h!]

\begin{minipage}[b]{.45 \textwidth}
 \centering

\begin{tikzpicture}
\clip (-2.5,-0.5) rectangle (4.5,6.5); 

\foreach \x  in {-8,-7.5,...,9}{
  \draw[help lines,thick,dotted]
    (\x,-8) -- (\x,9);
}
\begin{scope}[y=(60:1)]

\coordinate (pi1) at (0,1);
\coordinate (pi2) at (-1,2);

\coordinate (v0) at ($7/2*(pi2)$);
\coordinate (v1) at ($(pi1)+3*(pi2)$);
\coordinate (v2) at ($11/2*(pi1)$);

\coordinate (a1) at (1,0);
\coordinate (a2) at (-2,1);
\coordinate (a3) at ($3*(a1)+(a2)$);

\coordinate (Origin) at (0,0);
\coordinate (asum) at ($(a1)+(a2)$);
\coordinate (2rho) at (-2,6);

\coordinate (barycenter) at (1.193-3.022,6.044);

\foreach \x  in {-8,-7,...,9}{
  \draw[help lines,dotted]
    (\x,-8) -- (\x,9)
    (-8,\x) -- (9,\x) 
    [rotate=60] (\x,-8) -- (\x,9) ;
}
\foreach \x  in {-8,-7,...,9}{
  \draw[help lines,thick,dotted]
    (\x,-8) -- (\x,9);
}

\fill (Origin) circle (2pt) node[below left] {0};
\fill (pi1) circle (2pt) node[right] {$\varpi_1$};
\fill (pi2) circle (2pt) node[right] {$\varpi_2$};
\fill (a1) circle (2pt) node[below] {$\alpha_1$};
\fill (a2) circle (2pt) node[above left] {$\alpha_2$};
\fill (a3) circle (2pt) node[below right] {$3\alpha_1+\alpha_2$};

\fill (asum) circle (2pt) node[above] {$\alpha_1+\alpha_2$};
\fill (2rho) circle (2pt) node[below] {$2\rho$};

\fill (v1) circle (2pt) node[right] {$A_1$};

\fill (barycenter) circle (2pt) node[right] {$\textbf{bar}_{DH}(\Delta_{1})$};

\draw[->,blue,thick](Origin)--(pi1);
\draw[->,blue,thick](Origin)--(pi2); 
\draw[->,blue,thick](Origin)--(a1);
\draw[->,blue,thick](Origin)--(a2);
\draw[->,blue,thick](Origin)--(a3); 
\draw[->,blue,thick](Origin)--(asum); 

\draw[thick](Origin)--(v0);
\draw[thick](Origin)--(v2);
\draw[thick](v0)--(v1);
\draw[thick](v1)--(v2);

\aeMarkRightAngle[size=6pt](Origin,v0,v1)
\aeMarkRightAngle[size=6pt](v1,v2,Origin)

\draw [shorten >=-4cm, red, thick, dashed] (2rho) to ($(2rho)+(a1)$);
\draw [shorten >=-4cm, red, thick, dashed] (2rho) to ($(2rho)+(a2)$);
\end{scope}
\end{tikzpicture} 
\subcaption{$\overline{G_2}(1)$ (Case-I.1)}
\label{G-I.1}

\end{minipage}
\begin{minipage}[b]{.45 \textwidth}
\centering

\begin{tikzpicture}
\clip (-2.5,-0.5) rectangle (4.7,6.5); 
\foreach \x  in {-8,-7.5,...,9}{
  \draw[help lines,thick,dotted]
    (\x,-8) -- (\x,9);
}
\begin{scope}[y=(60:1)]

\coordinate (pi1) at (0,1);
\coordinate (pi2) at (-1,2);
\coordinate (v1) at ($7*(pi1)$);
\coordinate (v2) at ($7/2*(pi2)$);
\coordinate (a1) at (1,0);
\coordinate (a2) at (-2,1);
\coordinate (a3) at ($3*(a1)+(a2)$);

\coordinate (Origin) at (0,0);
\coordinate (asum) at ($(a1)+(a2)$);
\coordinate (2rho) at (-2,6);

\coordinate (barycenter) at (1.759-49/15,98/15);

\foreach \x  in {-8,-7,...,9}{
  \draw[help lines,dotted]
    (\x,-8) -- (\x,9)
    (-8,\x) -- (9,\x) 
    [rotate=60] (\x,-8) -- (\x,9) ;
}
\foreach \x  in {-8,-7,...,9}{
  \draw[help lines,thick,dotted]
    (\x,-8) -- (\x,9);
}

\fill (Origin) circle (2pt) node[below left] {0};
\fill (pi1) circle (2pt) node[right] {$\varpi_1$};
\fill (pi2) circle (2pt) node[right] {$\varpi_2$};
\fill (a1) circle (2pt) node[below] {$\alpha_1$};
\fill (a2) circle (2pt) node[above left] {$\alpha_2$};
\fill (a3) circle (2pt) node[below right] {$3\alpha_1+\alpha_2$};

\fill (asum) circle (2pt) node[above] {$\alpha_1+\alpha_2$};
\fill (2rho) circle (2pt) node[below] {$2\rho$};

\fill (v1) circle (2pt) node[right] {$A_1$};
\fill (v2) node[left] {$\frac{7}{2}\varpi_2$};

\fill (barycenter) circle (2pt) node[below right] {$\, \textbf{bar}_{DH}(\Delta_{2})$};

\draw[->,blue,thick](Origin)--(pi1);
\draw[->,blue,thick](Origin)--(pi2); 
\draw[->,blue,thick](Origin)--(a1);
\draw[->,blue,thick](Origin)--(a2);
\draw[->,blue,thick](Origin)--(a3); 
\draw[->,blue,thick](Origin)--(asum); 

\draw[thick](Origin)--(v1);
\draw[thick](Origin)--(v2);
\draw[thick](v1)--(v2);

\aeMarkRightAngle[size=6pt](Origin,v2,v1)

\draw [shorten >=-4cm, red, thick, dashed] (2rho) to ($(2rho)+(a1)$);
\draw [shorten >=-4cm, red, thick, dashed] (2rho) to ($(2rho)+(a2)$);
\end{scope}
\end{tikzpicture} 
\subcaption{$\overline{G_2}(2)$ (Case-I.2)}
\label{G-I.2}

\end{minipage}
\caption{Moment polytopes of Gorenstein Fano compactifications of $G_2$.}
\label{G-polytope}
\end{figure}
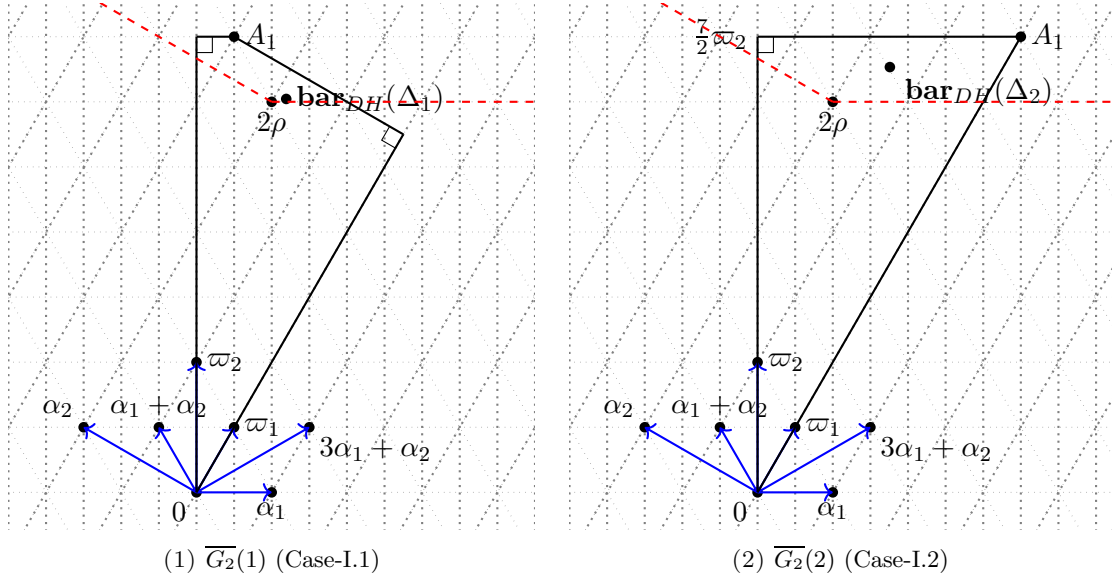

\clearpage
\subsection{$\mathsf{A_1}\times\mathsf{A_1}$-type}
\subsubsection{Gorenstein Fano group compactification of $\SL_2(\mathbb{C}) \times \SL_2(\mathbb{C})$}

As $\SL_2(\mathbb C) \times \SL_2(\mathbb C)$ is simply-connected, 
the spherical weight lattice $\mathcal M$ is spanned by $\varpi_1 = (1, 0)$ and $\varpi_2 = (0, 1)$, and 
its dual lattice $\mathcal N$ is spanned by $\alpha_1^{\vee} = (1, 0)$ and $\alpha_2^{\vee} = (0, 1)$.
The Weyl walls are given by $W_1=\{y=0\}$ and $W_2=\{x=0\}$.
The sum of positive roots is $2\rho=(2,2)$.
Choosing a realization of the root system $\mathsf{A_1} \times \mathsf{A_1}$ in the Euclidean plane $\mathbb R^2$ with $\varpi_1 = (1, 0)$ and $\varpi_2 = (0, 1)$, 
for $p=(x, y)$ 
we obtain the Duistermaat--Heckman measure 
\begin{equation*}
\prod_{\alpha \in \Phi^+} \kappa(\alpha, p)^2 \, dp 
= (2x)^2 (2y)^2 \, dxdy
\end{equation*}
because $\mathsf{A_1} \times \mathsf{A_1}$ has 2 positive roots:
$\Phi^+ = \{ \alpha_1, \alpha_2\} = \{ (2, 0), (0, 2) \}$.


\begin{theorem}
There are 15 Gorenstein Fano equivariant compactifications of $\SL_2(\mathbb{C}) \times \SL_2(\mathbb{C})$ up to isomorphism: two smooth compactifications and 13 singular compactifications. 
Their moment polytopes are given in the following Table~\ref{table:SL2*SL2} and Figure~\ref{SLSL-polytope1}. 
Among them only two smooth Fano compactifications and a unique singular one admit K\"{a}hler--Einstein metrics. 
{\rm
\begin{table}[h]
{\renewcommand{\arraystretch}{1.2}
\footnotesize{
\begin{tabular}{|c|l|l|l|c|c|c|}\hline
$X$ & 	Case & Edges (except Weyl walls)  & Vertices	&	Smooth? & 	KE & $\textbf{bar}(\Delta)$\\
\hline
$\overline{\SL_2\times \SL_2}(1)$	&	I.1.1 &	$y=3,\ x+2y=7$  & $(0,3),(1,3),(7,0)$	& singular	&	No & $\Big(\frac{11427}{3794}, \frac{5661}{3794}\Big)$
\\ \hline
$\overline{\SL_2\times \SL_2}(2)$	&	I.1.1.a &	$y=3,\ x+2y=7,\ x+y=5$  & $(0,3),(1,3),(3,2),(5,0)$ 	& singular	&	No & $\Big(\frac{174753}{72086}, \frac{119919}{72086}\Big)$
\\ \hline
$\overline{\SL_2\times \SL_2}(3)$	&	I.1.1.b &	$y=3,\ x+2y=7,\ x=3$  & $(0,3),(1,3),(3,2),(3,0)$ 	& singular	&	No & $\Big(\frac{106833}{52486}, \frac{4233}{2282}\Big)$
\\ \hline
$\overline{\SL_2\times \SL_2}(4)$	&	I.1.2 &	$y=3,\ 2x+3y=11,\ x+y=5$  & $(0,3),(1,3),(4,1),(5,0)$		& singular	&	No & $\Big(\frac{130089}{55657}, \frac{87441}{55657}\Big)$
\\ \hline
$\overline{\SL_2\times \SL_2}(5)$&	I.1.3 &	$y=3,\ 3x+4y=15$  & $(0,3),(1,3),(5,0)$ 	& singular	&	No & $\Big(\frac{243}{112}, \frac{177}{112}\Big)$
\\ \hline
$\overline{\SL_2\times \SL_2}(6)$&	I.2.1 &	$y=3,\ x+y=5$  & $(0,3),(2,3),(5,0)$ 	& singular	&	No & $\Big(\frac{1569}{665}, \frac{1233}{665}\Big)$
\\ \hline
$\overline{\SL_2\times \SL_2}(7)$	&	I.2.1.a &	$y=3,\ x+y=5,\ x=3$  & $(0,3),(2,3),(3,2),(3,0)$	& smooth	&	Yes & $\Big(\frac{28779}{14035}, \frac{28779}{14035}\Big)$
\\ \hline
$\overline{\SL_2\times \SL_2}(8)$	&	I.2.2 &	$y=3,\ 3x+y=9$  & $(0,3),(2,3),(3,0)$ 	& singular	&	No & $\Big(\frac{2817}{1631}, \frac{3411}{1631}\Big)$
\\ \hline
$\overline{\SL_2\times \SL_2}(9)$	&	I.3	&	$y=3,\ x=3$  & $(0,3),(3,3)(3,0),$ 	& smooth	&	Yes & $\Big(\frac{9}{4}, \frac{9}{4}\Big)$
\\ \hline
$\overline{\SL_2\times \SL_2}(10)$	&	II.1 &		$x+y=5$  & $(0,5),(5,0)$	& singular	&	Yes & $\Big(\frac{15}{7}, \frac{15}{7}\Big)$
\\ \hline
$\overline{\SL_2\times \SL_2}(11)$&	II.2.1 &	$x+3y=9$  & $(0,3),(9,0)$ 	& singular	&	No  & $\Big(\frac{27}{7}, \frac{9}{7}\Big)$
\\ \hline
$\overline{\SL_2\times \SL_2}(12)$&	II.2.1.a	&	$x+3y=9,\ x+2y=7$  & $(0,3),(3,2),(7,0)$ 	& singular	&	No & $\Big(\frac{293793}{93989}, \frac{131547}{93989}\Big)$
\\ \hline
$\overline{\SL_2\times \SL_2}(13)$&	II.2.1.b &		$x+3y=9,\ x+y=5$  & $(0,3),(3,2),(5,0)$	& singular	&	No  & $\Big(\frac{160017}{63637}, \frac{98619}{63637}\Big)$
\\ \hline
$\overline{\SL_2\times \SL_2}(14)$&	II.2.2 &	$2x+5y=15,\ x+2y=7$  & $(0,3),(5,1),(7,0)$ 	& singular	&	No & $\Big(\frac{245313}{77896}, \frac{101469}{77896}\Big)$
\\ \hline
$\overline{\SL_2\times \SL_2}(15)$	&	II.2.3 &	$3x+7y=21$  & $(0,3),(7,0)$	& singular	&	No & $\Big(3, \frac{9}{7}\Big)$
\\ \hline
\end{tabular}
}}
\caption{Gorenstein Fano equivariant compactifications of $\SL_2(\mathbb{C}) \times \SL_2(\mathbb{C})$.}
\label{table:SL2*SL2}
\end{table}}
\end{theorem}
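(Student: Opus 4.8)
The plan is to carry out, for $G=\SL_2(\mathbb{C})\times\SL_2(\mathbb{C})$, the same three-step procedure used above for $\SL_3(\mathbb{C})$, $\Sp_4(\mathbb{C})$, $\SO_5(\mathbb{C})$ and $G_2$. For the classification I would invoke Corollary~\ref{gorenstein polytope}: a Gorenstein Fano compactification corresponds to a lattice polytope in $\mathcal{M}\otimes\mathbb{R}$ having the origin as a vertex, the two Weyl walls $W_1=\{y=0\}$ and $W_2=\{x=0\}$ among its facets, and every remaining facet lying on a hyperplane
$$F_i=\{(x,y)\in\mathcal{M}\otimes\mathbb{R}:m_ix+n_iy=2m_i+2n_i+1\}$$
for a primitive outer normal $m_i\alpha_1^{\vee}+n_i\alpha_2^{\vee}=(m_i,n_i)\in\mathcal{N}$ with $m_i,n_i\geq 0$. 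Starting from the facet $F_1$ meeting $W_2$, I would split into the case $F_1\perp W_2$ (so $F_1=\{y=3\}$, meeting $W_2$ at the lattice point $(0,3)$) and the case that $F_1$ meets $W_2$ in a vertex, which integrality and convexity restrict to $(0,3)$ or $(0,5)$. In every branch one then propagates along the boundary: at each newly produced vertex only finitely many lattice points are compatible both with convexity and with the primitivity of the next outer normal, and the branches that fail to close up into a reflexive polygon are discarded. Finally, one identifies polytopes related by the involution $(x,y)\mapsto(y,x)$ arising from interchanging the two $\SL_2$-factors, which is an automorphism of $G$. This yields exactly the fifteen polytopes recorded in Table~\ref{table:SL2*SL2} and drawn in Figure~\ref{SLSL-polytope1}.

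For the smoothness statement I would apply Proposition~\ref{AK} to each of the fifteen: form the toric polytope $\Delta^{toric}=\bigcup_{w\in W}w\Delta$ for the Weyl group $W=(\mathbb{Z}/2\mathbb{Z})^2$ acting by the sign changes $(x,y)\mapsto(\pm x,\pm y)$, and test the Delzant condition at every vertex. One finds that $\Delta^{toric}$ is Delzant and has no vertex on a Weyl wall only for $\overline{\SL_2\times\SL_2}(7)$ and $\overline{\SL_2\times\SL_2}(9)$, which are therefore smooth by part~(2) of Proposition~\ref{AK}, whereas for the other thirteen the Delzant condition fails, typically at a vertex on a coordinate axis, so those are singular by part~(1). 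For the existence of K\"{a}hler--Einstein metrics I would compute the barycenter $\textbf{bar}_{DH}(\Delta_i)$ with respect to the Duistermaat--Heckman measure $(2x)^2(2y)^2\,dx\,dy$, obtaining the entries in the last column of Table~\ref{table:SL2*SL2}. Since $2\rho=(2,2)$ and $\mathcal{C}^+$ is generated by $\alpha_1=(2,0)$ and $\alpha_2=(0,2)$, the relative interior of the translated cone $2\rho+\mathcal{C}^+$ is the open quadrant $\{x>2,\ y>2\}$; hence by Theorem~\ref{KE criterion for Q-Fano group compactifications} the variety $\overline{\SL_2\times\SL_2}(i)$ admits a (singular) K\"{a}hler--Einstein metric precisely when both coordinates of $\textbf{bar}_{DH}(\Delta_i)$ exceed $2$. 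Reading off the table, this holds exactly for $i\in\{7,9\}$ (smooth) and $i=10$ (singular), and fails for the remaining twelve.

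The main obstacle is the first step. The enumeration is long and entirely by hand, and the delicate points are: bounding and then exhausting the lattice successors of each vertex; imposing primitivity of every facet normal so that the resulting polygon is genuinely reflexive rather than merely rational; and carrying out the identifications under the factor swap correctly, so that the final tally is exactly $15$ and not an over- or undercount. The barycenter computations in the third step are elementary but cumbersome --- integrals of $(2x)^2(2y)^2$ over polygons with up to five vertices --- which is why the answers are the large fractions in the table; I would evaluate them by triangulating each polygon from the origin and summing the contributions.
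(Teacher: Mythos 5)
Your plan is correct and follows essentially the same route as the paper: the same case analysis on the facet meeting $W_2$ (orthogonal versus vertex at $(0,3)$ or $(0,5)$) with convexity, primitivity, and the factor-swap identification giving the fifteen polytopes, Proposition~\ref{AK} for smoothness, and the barycenter test $\textbf{bar}_{DH}(\Delta)\in\{x>2,\,y>2\}$ from Theorem~\ref{KE criterion for Q-Fano group compactifications}, which indeed singles out exactly $\overline{\SL_2\times\SL_2}(7)$, $(9)$ and $(10)$.
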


\begin{proof}
Let $m_i\alpha_1^{\vee}+n_i\alpha_2^{\vee}=(m_i,n_i)\in\mathcal N$ be the primitive outer normal vector of the facet 
$$
F_i=\{m_ix+n_iy=2m_i+2n_i+1\},
$$
where $m_i>0, n_i\geq0$ by the convexity of the polytope.

\begin{itemize}
\item {\bf Case-I:} $F_1$ is orthogonal to $W_2$.\\
$F_1\perp W_2$ $\Rightarrow$ $F_1=\{y=3\}$\\
$F_1\cap F_2=:A_1=\Big(\frac{2m_2-n_2+1}{m_2},3\Big)=\Big(2+\frac{-n_2+1}{m_2},3\Big)\in \mathcal M$ 
$\Rightarrow$ $A_1=(1,3)$ or $(2,3)$ or $(3,3)$
\begin{itemize}
\item {\bf Case-I.1:} $A_1=(1,3)$ $\Rightarrow$ $m_2+1=n_2$
\begin{itemize}
\item {\bf Case-I.1.1:} $(m_2,n_2)=(1,2)$ and $F_2=\{x+2y=7\}$, $A_2=(7,0)$\\ {\small\bf($\Delta_1$,\ Figure ~\ref{SLSL-I.1.1})}.
\item {\bf Case-I.1.1.a:} $A_2=(3,2)$ and $F_3=\{x+y=5\}$ {\small\bf($\Delta_2$,\ Figure ~\ref{SLSL-I.1.1.a})}.
\item {\bf Case-I.1.1.b:} $A_2=(3,2)$ and $F_3=\{x=3\}$ {\small\bf($\Delta_3$,\ Figure ~\ref{SLSL-I.1.1.b})}.
\item {\bf Case-I.1.2}: $(m_2,n_2)=(2,3)$ and $F_2=\{2x+3y=11\}$, $A_2=(4,1)$ and $F_3=\{x+y=5\}$ {\small\bf($\Delta_4$,\ Figure ~\ref{SLSL-I.1.2})}.
\item {\bf Case-I.1.3}: $(m_2,n_2)=(3,4)$ and $F_2=\{3x+4y=15\}$, $A_2=(5,0)$\\ {\small\bf($\Delta_5$,\ Figure ~\ref{SLSL-I.1.3})}.
\end{itemize}
\item {\bf Case-I.2}: $A_1=(2,3)$ $\Rightarrow$ $n_2=1$\\
If $(m_2,n_2)=(2,1)$ so that $F_2=\{2x+y=7\}$, then $A_2=(3,1)$ and $F_3=\{x=3\}$.\\ 
In this case, the polytope is isomorphic to Case-I.1.1.b.
\begin{itemize}
\item {\bf Case-I.2.1}: $(m_2,n_2)=(1,1)$ and $F_2=\{x+y=5\}$, $A_2=(5,0)$\\ {\small\bf($\Delta_6$,\ Figure ~\ref{SLSL-I.2.1})}.
\item {\bf Case-I.2.1.a}: $A_2=(3,2)$ and $F_3=\{x=3\}$ {\small\bf($\Delta_7$,\ Figure ~\ref{SLSL-I.2.1.a})}.
\item {\bf Case-I.2.2}: $(m_2,n_2)=(3,1)$ and $F_2=\{3x+y=9\}$, $A_2=(3,0)$\\ {\small\bf($\Delta_8$,\ Figure ~\ref{SLSL-I.2.2})}.
\end{itemize}
\item {\bf Case-I.3}: $A_1=(3,3)$ $\Rightarrow$ $(m_2,n_2)=(1,0)$ so that $F_2=\{x=3\}$ {\small\bf($\Delta_9$,\ Figure ~\ref{SLSL-I.3})}.
\end{itemize}
\item {\bf Case-II:} $F_1$ is not orthogonal to $W_2$.\\
$F_1\cap W_2=:A_1=(0,\frac{2m_1+2n_1+1}{n_1})$ where $n_1\geq m_1>0$ (Convexity and symmetry)\\
$A_1\in \mathcal M$ $\Rightarrow$ $\frac{2m_1+1}{n_1}\in\mathbb N_+$ $\Rightarrow$
$\frac{2m_1+1}{n_1}=1 (A_1=(0,3))$ or  $\frac{2m_1+1}{n_1}=3 (A_1=(0,5))$ (primitive)
\begin{itemize}
\item {\bf Case-II.1}: $\frac{2m_1+1}{n_1}=3$ $\Rightarrow$ $(m_1,n_1)=(1,1)$ and $F_1=\{x+y=5\}$, $A_2=(5,0)$ \\ {\small\bf($\Delta_{10}$,\ Figure ~\ref{SLSL-II.1})}.
\item {\bf Case-II.2}: $\frac{2m_1+1}{n_1}=1$ $\Rightarrow$ $F_1=\{m_1x+(2m_1+1)y=6m_1+3\}$ $\Rightarrow$ $m_1=1,2,3$
\begin{itemize}
\item {\bf Case-II.2.1}: $(m_1,n_1)=(1,3)$ and $F_1=\{x+3y=9\}$, $A_2=(9,0)$ {\small\bf($\Delta_{11}$,\ Figure ~\ref{SLSL-II.2.1})}.
\item {\bf Case-II.2.1.a}: $A_2=(3,2)$ and $F_2=\{x+2y=7\}$ {\small\bf($\Delta_{12}$,\ Figure ~\ref{SLSL-II.2.1.(a)})}.
\item {\bf Case-II.2.1.b}: $A_2=(3,2)$ and $F_2=\{x+y=5\}$ {\small\bf($\Delta_{13}$,\ Figure ~\ref{SLSL-II.2.1.(b)})}.
\item {\bf Case-II.2.2}: $(m_1,n_1)=(2,5)$ and $F_1=\{2x+5y=15\}$, $A_2=(5,1)$ and $F_2=\{x+2y=7\}$ {\small\bf($\Delta_{14}$,\ Figure ~\ref{SLSL-II.2.2})}.
\item {\bf Case-II.2.3}: $(m_1,n_1)=(3,7)$ and $F_1=\{3x+7y=21\}$, $A_2=(7,0)$\\ {\small\bf($\Delta_{15}$,\ Figure ~\ref{SLSL-II.2.3})}.
\end{itemize}
\end{itemize}
\end{itemize}

In the Case-I.3,
the variety $\overline{\SL_2\times \SL_2}(9)$ is isomorphic to $\mathbb Q^3 \times \mathbb Q^3$ from Example~\ref{compactifications of SL2 and PSL2}. 
As $\overline{\SL_2\times \SL_2}(9)$ is a homogeneous variety, it naturally admits a K\"{a}hler--Einstein metric. 

Applying Theorem \ref{KE criterion for Q-Fano group compactifications}, we can determine which of them are equivariant K-stable and admit (singular) K\"{a}hler-Einstein metric.
For instance, in the Case-I.2.1.a,
the variety $\overline{\SL_2\times \SL_2}(7)$ is the blow-up of the smooth Fano compactification $\overline{\SL_2\times \SL_2}(9)$ along the closed orbit which is a diagonal embedding of $\mathbb Q^3$. 
Since the barycenter 
$$
\textbf{bar}_{DH}(\Delta_{7}) = \left(\frac{28779}{14035}, \frac{28779}{14035}\right) \approx (2.051, 2.051)
$$
of the moment polytope $\Delta_{7}$ with respect to the Duistermaat--Heckman measure is in the relative interior of the translated cone $2 \rho + \mathcal C^+$, 
$\overline{\SL_2\times \SL_2}(7)$ admits a K\"{a}hler--Einstein metric by Theorem~\ref{KE criterion for Q-Fano group compactifications}.
\end{proof}

\clearpage

\begin{figure}[b]
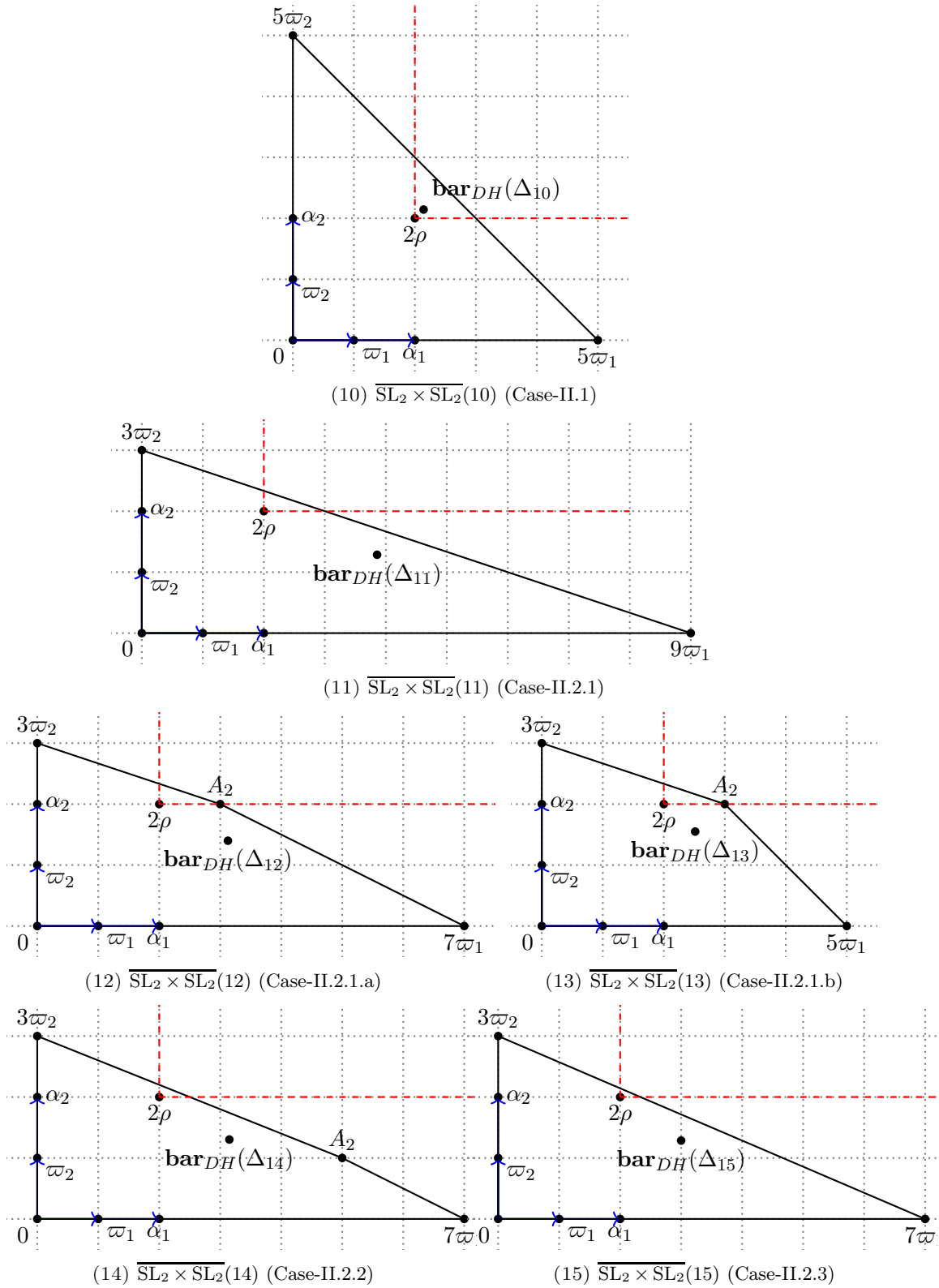


\begin{minipage}[b]{.45 \textwidth}
 \centering

 
\subcaption{$\overline{\SL_2\times \SL_2}(15)$ (Case-II.2.3)}
\label{SLSL-II.2.3}
\medskip
\end{minipage}
\caption{Moment polytopes of Gorenstein Fano compactifications of $\SL_2(\mathbb{C})\times\SL_2(\mathbb{C})$-(ii).}
\label{SLSL-polytope2}
\end{figure}

\clearpage

\subsubsection{Gorenstein Fano group compactification of $\PSL_2(\mathbb{C}) \times \PSL_2(\mathbb{C})$}

As $\PSL_2(\mathbb C) \times \PSL_2(\mathbb C)$ is of adjoint type, 
the spherical weight lattice $\mathcal M$ is spanned by $\alpha_1 = (2, 0)$ and $\alpha_2=(0, 2)$, and  
its dual lattice $\mathcal N$ is spanned by $\varpi_1^{\vee}=\Big(\frac{1}{2}, 0 \Big)$ and $\varpi_2^{\vee}=\Big(0, \frac{1}{2}\Big)$.
The Weyl walls are given by $W_1=\{y=0\}$ and $W_2=\{x=0\}$.
The sum of positive roots is $2\rho=(2,2)$.

\begin{theorem}
There are seven Gorenstein Fano equivariant compactifications of $\PSL_2(\mathbb{C}) \times \PSL_2(\mathbb{C})$ up to isomorphism: two smooth compactifications and five singular compactifications. 
Their moment polytopes are given in the following Table~\ref{table:PSL2*PSL2} and Figure~\ref{PSLPSL-polytope}. 
Among them only two smooth compactifications and three singular compactifications admit K\"{a}hler--Einstein metrics. 
{\rm 
\begin{table}[h]
{\renewcommand{\arraystretch}{1.2}
\footnotesize{
\begin{tabular}{|c|l|l|l|c|c|c|}\hline
$X$ & 	Case & Edges (except Weyl walls)  & Vertices	&	Smooth? & 	KE &  $\textbf{bar}(\Delta)$\\
\hline
$\overline{\PSL_2\times \PSL_2}(1)$&	I.1.1 &	$y=4,\ x+y=6$  & $(0,4),(2,4),(6,0)$	& singular	&	Yes & $\Big(\frac{783}{287}, \frac{678}{287}\Big)$
\\ \hline
$\overline{\PSL_2\times \PSL_2}(2)$&	I.1.1.a &	$y=4,\ x+y=6,\ x=4$  & $(0,4),(2,4),(4,2),(4,0)$	& smooth	&	Yes  & $\Big(\frac{10254}{4081}, \frac{10254}{4081}\Big)$
\\ \hline
$\overline{\PSL_2\times \PSL_2}(3)$	&	I.1.2 &	$y=4,\ 2x+y=8$  & $(0,4),(2,4),(4,0)$ 	& singular	&	Yes  & $\Big(\frac{99}{49}, \frac{128}{49}\Big)$
\\ \hline
$\overline{\PSL_2\times \PSL_2}(4)$	&	I.2 &	$y=4,\ x=4$  & $(0,4),(4,4),(4,0)$	& smooth	&	Yes  	& $(3,3)$
\\ \hline
$\overline{\PSL_2\times \PSL_2}(5)$	&	II.1 &	$x+y=6$  & $(0,6),(6,0)$ 	& singular	&	Yes & $\Big(\frac{18}{7}, \frac{18}{7}\Big)$
\\ \hline
$\overline{\PSL_2\times \PSL_2}(6)$	&	II.2.1 &	$x+2y=8$  & $(0,4),(8,0)$ 	& singular	&	No & $\Big(\frac{24}{7}, \frac{12}{7}\Big)$
\\ \hline
$\overline{\PSL_2\times \PSL_2}(7)$	&	II.2.1.a &$x+2y=8,\ x+y=6$  & $(0,4),(4,2),(6,0)$	& singular	&	No & $\Big(\frac{8418}{2863}, \frac{5346}{2863}\Big)$
\\ \hline
\end{tabular}
}}
\caption{Gorenstein Fano equivariant compactifications of $\PSL_2(\mathbb{C}) \times \PSL_2(\mathbb{C})$.}
\label{table:PSL2*PSL2}
\end{table}}
\end{theorem}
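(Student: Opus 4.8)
The plan is to follow the same three-step scheme used for the other rank-two groups in this section, so I only indicate how each step specializes here. \emph{Step 1 (enumeration of moment polytopes).} By Corollary~\ref{gorenstein polytope}, each Gorenstein Fano compactification $X$ of $\PSL_2(\mathbb C) \times \PSL_2(\mathbb C)$ is determined by its moment polytope $\Delta = \Delta(X, K_X^{-1})$, a lattice polytope in $\mathcal M \otimes \mathbb R$ having the origin as one vertex, two facets on the Weyl walls $W_1 = \{y=0\}$ and $W_2 = \{x=0\}$, and every remaining facet of the form $F_i = \{ m_i x + n_i y = 2m_i + 2n_i + 2 \}$ with primitive outer normal $m_i \varpi_1^\vee + n_i \varpi_2^\vee \in \mathcal N$, $m_i, n_i \ge 0$. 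I would enumerate these by starting from the facet $F_1$ adjacent to $W_2$, splitting into Case~I ($F_1 \perp W_2$, which forces $F_1 = \{y=4\}$) and Case~II ($F_1$ not perpendicular to $W_2$, which forces $F_1 \cap W_2 \in \{(0,4),(0,6)\}$), and then walking around $\partial\Delta$ vertex by vertex; at each step the requirements that the next vertex be a lattice point, the next outer normal be primitive, and $\Delta$ remain convex leave only finitely many branches. Listing all branches and discarding those related by the involution exchanging the two $\PSL_2(\mathbb C)$-factors (reflection across the line $y=x$) yields exactly the seven polytopes $\Delta_1,\dots,\Delta_7$ of Table~\ref{table:PSL2*PSL2} and Figure~\ref{PSLPSL-polytope}, a count consistent with Ruzzi's classification of locally factorial Fano symmetric varieties of rank two.

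\emph{Step 2 (smoothness).} For each $\Delta_i$ I would form the toric polytope $\Delta_i^{toric} = \bigcup_{w \in W} w\Delta_i$, where $W = (\mathbb Z/2)^2$ acts by the two axis reflections, and apply Proposition~\ref{AK}: $X$ is smooth precisely when $\Delta_i^{toric}$ is Delzant and no vertex of it lies on a Weyl wall. This picks out $\overline{\PSL_2\times \PSL_2}(2)$ and $\overline{\PSL_2\times \PSL_2}(4)$ as the two smooth members. Moreover $\overline{\PSL_2\times \PSL_2}(4)$ is $\mathbb P^3 \times \mathbb P^3$, the self-product of the wonderful compactification $\mathbb P^3$ of $\PSL_2(\mathbb C) \cong \SO_3(\mathbb C)$ from Example~\ref{compactifications of SL2 and PSL2}; being homogeneous, it automatically admits a K\"ahler--Einstein metric. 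Using Brion's description of birational equivariant morphisms between smooth rank-two spherical varieties, $\overline{\PSL_2\times \PSL_2}(2)$ is obtained from $\overline{\PSL_2\times \PSL_2}(4)$ (equivalently from $\overline{\PSL_2\times \PSL_2}(1)$) by blowing up a closed orbit.

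\emph{Step 3 (the K\"ahler--Einstein criterion).} Since $\Phi^+ = \{\alpha_1, \alpha_2\} = \{(2,0),(0,2)\}$, the Duistermaat--Heckman measure is $(2x)^2(2y)^2\, dx\, dy$ and $2\rho + \Xi = \{(x,y): x>2,\ y>2\}$, so by Theorem~\ref{KE criterion for Q-Fano group compactifications} the variety $\overline{\PSL_2\times \PSL_2}(i)$ admits a (singular) K\"ahler--Einstein metric if and only if both coordinates of $\textbf{bar}_{DH}(\Delta_i)$ strictly exceed $2$. Each barycenter is a ratio of elementary integrals of monomials $x^k y^\ell$ over the polygon $\Delta_i$, reducing on each triangulating piece to the Beta-integral $\int_0^a t^p(a-t)^q\,dt$; this gives the values in the last column of Table~\ref{table:PSL2*PSL2}. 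Comparing each with the line $\{x=2,\ y=2\}$ shows that $\Delta_1,\dots,\Delta_5$ pass the test (two smooth, three singular) while $\Delta_6$ and $\Delta_7$ fail, their second coordinates $12/7$ and $5346/2863$ lying below $2$.

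The only genuinely delicate point is the enumeration in Step~1: one must ensure the case analysis around $\partial\Delta$ is exhaustive and that the quotient by the factor-swapping involution is taken correctly, so that exactly seven isomorphism classes remain. The integrals in Step~3 are routine but must be evaluated exactly rather than numerically, since for $\overline{\PSL_2\times \PSL_2}(3)$ the barycenter coordinate $99/49$ clears the threshold $2$ only by $1/49$.
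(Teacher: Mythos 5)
Your proposal is correct and follows essentially the same route as the paper: the same enumeration of moment polytopes via Corollary~\ref{gorenstein polytope} with the case split on whether the facet meeting $W_2$ is orthogonal to it, the same smoothness check via Proposition~\ref{AK} together with the identification $\overline{\PSL_2\times \PSL_2}(4) \cong \mathbb P^3 \times \mathbb P^3$ and the blow-up description of $\overline{\PSL_2\times \PSL_2}(2)$, and the same barycenter criterion (Theorem~\ref{KE criterion for Q-Fano group compactifications}) with the Duistermaat--Heckman measure $(2x)^2(2y)^2\,dx\,dy$ and threshold region $\{x>2,\ y>2\}$, including the delicate case $99/49>2$ for $\overline{\PSL_2\times \PSL_2}(3)$. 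No substantive differences or gaps.
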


\begin{proof}
Let $m_i\varpi_1^{\vee}+n_i\varpi_2^{\vee}=\Big(\frac{m_i}{2},\frac{n_i}{2}\Big)\in\mathcal N$
be the primitive outer normal vector of the facet 
$$
F_i=\{m_ix+n_iy=2(m_i+n_i+1)\},
$$
where  $m_i\geq0, n_i\geq0$ by the convexity of the polytope.

For the facet $F_1$ which intersects the Weyl wall $W_2$, we have two cases:
\begin{itemize}
\item {\bf Case-I:} $F_1$ is orthogonal to $W_2$. $\Rightarrow$ $F_1=\{y=4\}$, 
$F_1\cap F_2=:A_1=\Big(2+2\frac{-n_2+1}{m_2},4\Big)\in \mathcal M$
\begin{itemize}
\item {\bf Case-I.1}: $A_1=(2,4)$ $\Rightarrow$ $n_2=1$ 
\begin{itemize}
\item {\bf Case-I.1.1}: $(m_2,n_2)=(1,1)$ and $F_2=\{x+y=6\}$, $A_2=(6,0)$\\ {\small\bf($\Delta_{1}$,\ Figure ~\ref{PSLPSL-I.1.1})}.
\item {\bf Case-I.1.1.a}: $A_2=(4,2)$ and $F_3=\{x=4\}$ {\small\bf($\Delta_{2}$,\ Figure ~\ref{PSLPSL-I.1.1.(a)})}.
\item {\bf Case-I.1.2}: $(m_2,n_2)=(2,1)$ and $F_2=\{2x+y=8\}$, $A_2=(4,0)$\\ {\small\bf($\Delta_{3}$,\ Figure ~\ref{PSLPSL-I.1.2})}.
\end{itemize}
\item {\bf Case-I.2}: $A_1=(4,4)$ $\Rightarrow$ $(m_2,n_2)=(1,0)$ so that $F_2=\{x=4\}$ {\small\bf($\Delta_{4}$,\ Figure ~\ref{PSLPSL-I.2})}.
\end{itemize}
\item {\bf Case-II:} $F_1$ is not orthogonal to $W_2$.\\
$A_1=(0,2\frac{m_1+n_1+1}{n_1})\in \mathcal M$
$\Rightarrow$ $\frac{m_1+n_1+1}{n_1}=2 (A_1=(0,4))$ or  $\frac{m_1+n_1+1}{n_1}=3 (A_1=(0,6))$
\begin{itemize}
\item {\bf Case-II.1}: $\frac{m_1+n_1+1}{n_1}=3$ $\Rightarrow$ $(m_1,n_1)=(1,1)$ and $F_1=\{x+y=6\}$, $A_2=(6,0)$\\ {\small\bf($\Delta_{5}$,\ Figure ~\ref{PSLPSL-II.1})}.
\item {\bf Case-II.2}: $\frac{m_1+n_1+1}{n_1}=2$ $\Rightarrow$ $m_1+1=n_1$ $\Rightarrow$ $F_1=\{m_1x+(m_1+1)y=4(m_1+1)\}$
\begin{itemize}
\item {\bf Case-II.2.1}: $m_1=1$ $\Rightarrow$ $F_1=\{x+2y=8\}$, $A_2=(8,0)$ {\small\bf($\Delta_{6}$,\ Figure ~\ref{PSLPSL-II.2.1})}.
\item {\bf Case-II.2.1.a}: $A_2=(4,2)$ and $F_2=\{x+y=6\}$
{\small\bf($\Delta_{7}$,\ Figure ~\ref{PSLPSL-II.2.1.(a)})}.

\end{itemize}
\end{itemize}
\end{itemize}

In the Case-I.2,
the variety $\overline{\PSL_2\times \PSL_2}(4)$ is isomorphic to $\mathbb P^3 \times \mathbb P^3$ from Example~\ref{compactifications of SL2 and PSL2}. 
As $\overline{\PSL_2\times \PSL_2}(4)$ is a homogeneous variety, it naturally admits a K\"{a}hler--Einstein metric. 

Applying Theorem \ref{KE criterion for Q-Fano group compactifications}, we can determine which of them are equivariant K-stable and admit (singular) K\"{a}hler-Einstein metric.
For instance, in the Case-I.1.1.a,  
the variety $\overline{\PSL_2\times \PSL_2}(2)$ is the blow-up of the smooth Fano compactification $\overline{\PSL_2\times \PSL_2}(4)$ along the closed orbit which is a diagonal embedding of $\mathbb P^3$. 
Since the barycenter 
$
\textbf{bar}_{DH}(\Delta_{2}) = \displaystyle \left(\frac{10254}{4081}, \frac{10254}{4081}\right) \approx (2.513, 2.513)
$
is in the relative interior of the cone $2 \rho + \mathcal C^+$, 
$\overline{\PSL_2\times \PSL_2}(2)$ admits a K\"{a}hler--Einstein metric.
\end{proof}


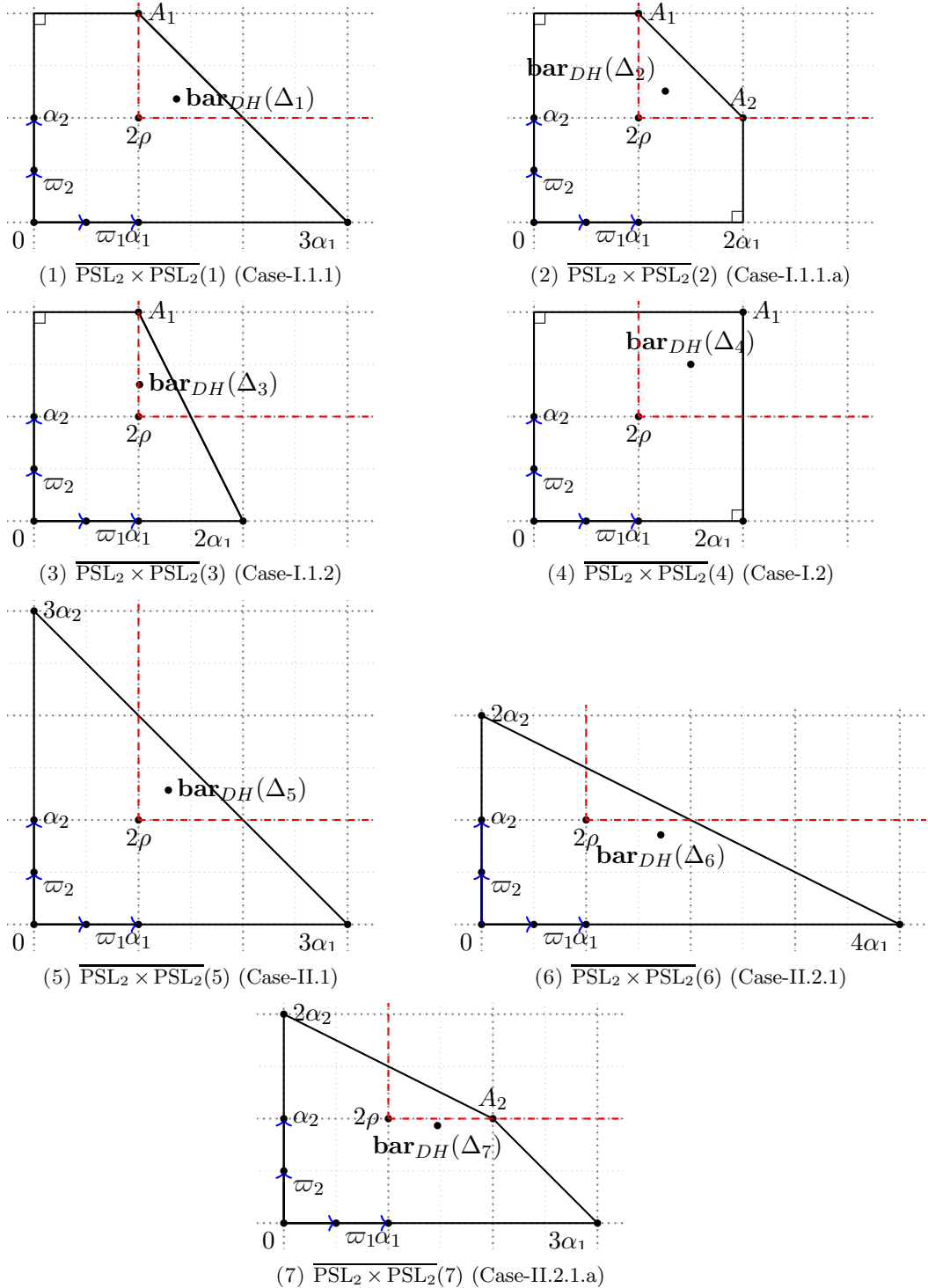
\begin{figure}[b]

\begin{minipage}[b]{.45 \textwidth}
 \centering

\begin{tikzpicture}[scale=0.79]
\clip (-0.5,-0.5) rectangle (6.5,4.2); 

\coordinate (pi1) at (1,0);
\coordinate (pi2) at (0,1);
\coordinate (v0) at ($4*(pi2)$);
\coordinate (v1) at ($2*(pi1)+4*(pi2)$);
\coordinate (v2) at ($6*(pi1)$);
\coordinate (a1) at (2,0);
\coordinate (a2) at (0,2);
\coordinate (barycenter) at (512/273-32/9,64/9);

\coordinate (Origin) at (0,0);
\coordinate (2rho) at (2,2);

\coordinate (barycenter) at (2.72822299651568, 2.36236933797909);

\foreach \x  in {-8,-7,...,10}{
  \draw[help lines,dotted]
    (\x,-8) -- (\x,10);
}

\foreach \x  in {-8,-7,...,10}{
  \draw[help lines,dotted]
    (-8,\x) -- (10,\x);
}

\foreach \x  in {-8,-6,...,10}{
  \draw[help lines,thick,dotted]
    (\x,-8) -- (\x,10);
}

\foreach \x  in {-8,-6,...,10}{
  \draw[help lines,thick,dotted]
     (-8,\x) -- (10,\x);
}

\fill (Origin) circle (2pt) node[below left] {0};
\fill (pi1) circle (2pt) node[below right] {$\varpi_1$};
\fill (pi2) circle (2pt) node[below right] {$\varpi_2$};
\fill (a1) circle (2pt) node[below] {$\alpha_1$};
\fill (a2) circle (2pt) node[right] {$\alpha_2$};

\fill (2rho) circle (2pt) node[below] {$2\rho$};

\fill (v1) circle (2pt) node[right] {$A_1$};
\fill (v2) circle (2pt) node[below left] {$3\alpha_1$};

\fill (barycenter) circle (2pt) node[right] {$\textbf{bar}_{DH}(\Delta_{1})$};

\draw[->,blue,thick](Origin)--(pi1);
\draw[->,blue,thick](Origin)--(pi2); 
\draw[->,blue,thick](Origin)--(a1);
\draw[->,blue,thick](Origin)--(a2);

\draw[thick](Origin)--(v0);
\draw[thick](v0)--(v1);
\draw[thick](v1)--(v2);
\draw[thick](Origin)--(v2);
\draw[thick](v1)--(v2);

\aeMarkRightAngle[size=6pt](Origin,v0,v1)

\draw [shorten >=-4cm, red, thick, dashed] (2rho) to ($(2rho)+(a1)$);
\draw [shorten >=-4cm, red, thick, dashed] (2rho) to ($(2rho)+(a2)$);
\end{tikzpicture} 
\subcaption{$\overline{\PSL_2\times \PSL_2}(1)$ (Case-I.1.1)}
\label{PSLPSL-I.1.1}
\medskip
\end{minipage}
\begin{minipage}[b]{.45 \textwidth}
 \centering

\begin{tikzpicture}[scale=0.79]
\clip (-0.5,-0.5) rectangle (6.5,4.2); 

\coordinate (pi1) at (1,0);
\coordinate (pi2) at (0,1);
\coordinate (v0) at ($4*(pi2)$);
\coordinate (v1) at ($2*(pi1)+4*(pi2)$);
\coordinate (v2) at ($4*(pi1)+2*(pi2)$);
\coordinate (v3) at ($4*(pi1)$);
\coordinate (a1) at (2,0);
\coordinate (a2) at (0,2);
\coordinate (barycenter) at (512/273-32/9,64/9);

\coordinate (Origin) at (0,0);
\coordinate (2rho) at (2,2);

\coordinate (barycenter) at (2.51261945601568, 2.51261945601568);

\foreach \x  in {-8,-7,...,10}{
  \draw[help lines,dotted]
    (\x,-8) -- (\x,10);
}

\foreach \x  in {-8,-7,...,10}{
  \draw[help lines,dotted]
    (-8,\x) -- (10,\x);
}

\foreach \x  in {-8,-6,...,10}{
  \draw[help lines,thick,dotted]
    (\x,-8) -- (\x,10);
}

\foreach \x  in {-8,-6,...,10}{
  \draw[help lines,thick,dotted]
     (-8,\x) -- (10,\x);
}

\fill (Origin) circle (2pt) node[below left] {0};
\fill (pi1) circle (2pt) node[below right] {$\varpi_1$};
\fill (pi2) circle (2pt) node[below right] {$\varpi_2$};
\fill (a1) circle (2pt) node[below] {$\alpha_1$};
\fill (a2) circle (2pt) node[right] {$\alpha_2$};

\fill (2rho) circle (2pt) node[below] {$2\rho$};

\fill (v1) circle (2pt) node[right] {$A_1$};
\fill (v2) circle (2pt) node[above] {$A_2$};
\fill (v3) node[below] {$2\alpha_1$};

\fill (barycenter) circle (2pt) node[above left] {$\textbf{bar}_{DH}(\Delta_{2})$};

\draw[->,blue,thick](Origin)--(pi1);
\draw[->,blue,thick](Origin)--(pi2); 
\draw[->,blue,thick](Origin)--(a1);
\draw[->,blue,thick](Origin)--(a2);

\draw[thick](Origin)--(v0);
\draw[thick](v0)--(v1);
\draw[thick](v1)--(v2);
\draw[thick](v2)--(v3);
\draw[thick](Origin)--(v3);

\aeMarkRightAngle[size=6pt](Origin,v0,v1)
\aeMarkRightAngle[size=6pt](v2,v3,Origin)

\draw [shorten >=-4cm, red, thick, dashed] (2rho) to ($(2rho)+(a1)$);
\draw [shorten >=-4cm, red, thick, dashed] (2rho) to ($(2rho)+(a2)$);
\end{tikzpicture} 
\subcaption{$\overline{\PSL_2\times \PSL_2}(2)$ (Case-I.1.1.a)}
\label{PSLPSL-I.1.1.(a)}

\medskip
\end{minipage}

\begin{minipage}[b]{.45 \textwidth}
 \centering

\begin{tikzpicture}[scale=0.79]
\clip (-0.5,-0.5) rectangle (6.5,4.2); 

\coordinate (pi1) at (1,0);
\coordinate (pi2) at (0,1);
\coordinate (v0) at ($4*(pi2)$);
\coordinate (v1) at ($2*(pi1)+4*(pi2)$);
\coordinate (v2) at ($4*(pi1)$);
\coordinate (a1) at (2,0);
\coordinate (a2) at (0,2);
\coordinate (barycenter) at (512/273-32/9,64/9);

\coordinate (Origin) at (0,0);
\coordinate (2rho) at (2,2);

\coordinate (barycenter) at (99/49, 128/49);

\foreach \x  in {-8,-7,...,10}{
  \draw[help lines,dotted]
    (\x,-8) -- (\x,10);
}

\foreach \x  in {-8,-7,...,10}{
  \draw[help lines,dotted]
    (-8,\x) -- (10,\x);
}

\foreach \x  in {-8,-6,...,10}{
  \draw[help lines,thick,dotted]
    (\x,-8) -- (\x,10);
}

\foreach \x  in {-8,-6,...,10}{
  \draw[help lines,thick,dotted]
     (-8,\x) -- (10,\x);
}

\fill (Origin) circle (2pt) node[below left] {0};
\fill (pi1) circle (2pt) node[below right] {$\varpi_1$};
\fill (pi2) circle (2pt) node[below right] {$\varpi_2$};
\fill (a1) circle (2pt) node[below] {$\alpha_1$};
\fill (a2) circle (2pt) node[right] {$\alpha_2$};

\fill (2rho) circle (2pt) node[below] {$2\rho$};

\fill (v1) circle (2pt) node[right] {$A_1$};
\fill (v2) circle (2pt) node[below left] {$2\alpha_1$};

\fill (barycenter) circle (2pt) node[right] {$\textbf{bar}_{DH}(\Delta_{3})$};

\draw[->,blue,thick](Origin)--(pi1);
\draw[->,blue,thick](Origin)--(pi2); 
\draw[->,blue,thick](Origin)--(a1);
\draw[->,blue,thick](Origin)--(a2);

\draw[thick](Origin)--(v0);
\draw[thick](v0)--(v1);
\draw[thick](v1)--(v2);
\draw[thick](Origin)--(v2);
\draw[thick](v1)--(v2);

\aeMarkRightAngle[size=6pt](Origin,v0,v1)

\draw [shorten >=-4cm, red, thick, dashed] (2rho) to ($(2rho)+(a1)$);
\draw [shorten >=-4cm, red, thick, dashed] (2rho) to ($(2rho)+(a2)$);
\end{tikzpicture} 
\subcaption{$\overline{\PSL_2\times \PSL_2}(3)$ (Case-I.1.2)}
\label{PSLPSL-I.1.2}
\medskip
\end{minipage}
\begin{minipage}[b]{.45 \textwidth}
 \centering

\begin{tikzpicture}[scale=0.79]
\clip (-0.5,-0.5) rectangle (6.5,4.2); 

\coordinate (pi1) at (1,0);
\coordinate (pi2) at (0,1);
\coordinate (v0) at ($4*(pi2)$);
\coordinate (v1) at ($4*(pi1)+4*(pi2)$);
\coordinate (v2) at ($4*(pi1)$);
\coordinate (a1) at (2,0);
\coordinate (a2) at (0,2);
\coordinate (barycenter) at (512/273-32/9,64/9);

\coordinate (Origin) at (0,0);
\coordinate (2rho) at (2,2);

\coordinate (barycenter) at (3, 3);

\foreach \x  in {-8,-7,...,10}{
  \draw[help lines,dotted]
    (\x,-8) -- (\x,10);
}

\foreach \x  in {-8,-7,...,10}{
  \draw[help lines,dotted]
    (-8,\x) -- (10,\x);
}

\foreach \x  in {-8,-6,...,10}{
  \draw[help lines,thick,dotted]
    (\x,-8) -- (\x,10);
}

\foreach \x  in {-8,-6,...,10}{
  \draw[help lines,thick,dotted]
     (-8,\x) -- (10,\x);
}

\fill (Origin) circle (2pt) node[below left] {0};
\fill (pi1) circle (2pt) node[below right] {$\varpi_1$};
\fill (pi2) circle (2pt) node[below right] {$\varpi_2$};
\fill (a1) circle (2pt) node[below] {$\alpha_1$};
\fill (a2) circle (2pt) node[right] {$\alpha_2$};

\fill (2rho) circle (2pt) node[below] {$2\rho$};

\fill (v1) circle (2pt) node[right] {$A_1$};
\fill (v2) circle (2pt) node[below left] {$2\alpha_1$};

\fill (barycenter) circle (2pt) node[above] {$\textbf{bar}_{DH}(\Delta_{4})$};

\draw[->,blue,thick](Origin)--(pi1);
\draw[->,blue,thick](Origin)--(pi2); 
\draw[->,blue,thick](Origin)--(a1);
\draw[->,blue,thick](Origin)--(a2);

\draw[thick](Origin)--(v0);
\draw[thick](v0)--(v1);
\draw[thick](v1)--(v2);
\draw[thick](Origin)--(v2);
\draw[thick](v1)--(v2);

\aeMarkRightAngle[size=6pt](Origin,v0,v1)
\aeMarkRightAngle[size=6pt](v1,v2,Origin)

\draw [shorten >=-4cm, red, thick, dashed] (2rho) to ($(2rho)+(a1)$);
\draw [shorten >=-4cm, red, thick, dashed] (2rho) to ($(2rho)+(a2)$);
\end{tikzpicture} 
\subcaption{$\overline{\PSL_2\times \PSL_2}(4)$ (Case-I.2)}
\label{PSLPSL-I.2}
\medskip
\end{minipage}

\begin{minipage}[b]{.45 \textwidth}
 \centering

\begin{tikzpicture}[scale=0.79]
\clip (-0.5,-0.5) rectangle (6.5,6.2); 

\coordinate (pi1) at (1,0);
\coordinate (pi2) at (0,1);
\coordinate (v0) at ($6*(pi2)$);
\coordinate (v1) at ($6*(pi1)$);
\coordinate (a1) at (2,0);
\coordinate (a2) at (0,2);
\coordinate (barycenter) at (512/273-32/9,64/9);

\coordinate (Origin) at (0,0);
\coordinate (2rho) at (2,2);

\coordinate (barycenter) at (18/7, 18/7);

\foreach \x  in {-8,-7,...,10}{
  \draw[help lines,dotted]
    (\x,-8) -- (\x,10);
}

\foreach \x  in {-8,-7,...,10}{
  \draw[help lines,dotted]
    (-8,\x) -- (10,\x);
}

\foreach \x  in {-8,-6,...,10}{
  \draw[help lines,thick,dotted]
    (\x,-8) -- (\x,10);
}

\foreach \x  in {-8,-6,...,10}{
  \draw[help lines,thick,dotted]
     (-8,\x) -- (10,\x);
}

\fill (Origin) circle (2pt) node[below left] {0};
\fill (pi1) circle (2pt) node[below right] {$\varpi_1$};
\fill (pi2) circle (2pt) node[below right] {$\varpi_2$};
\fill (a1) circle (2pt) node[below] {$\alpha_1$};
\fill (a2) circle (2pt) node[right] {$\alpha_2$};

\fill (2rho) circle (2pt) node[below] {$2\rho$};

\fill (v0) circle (2pt) node[right] {$3\alpha_2$};
\fill (v1) circle (2pt) node[below left] {$3\alpha_1$};

\fill (barycenter) circle (2pt) node[right] {$\textbf{bar}_{DH}(\Delta_{5})$};

\draw[->,blue,thick](Origin)--(pi1);
\draw[->,blue,thick](Origin)--(pi2); 
\draw[->,blue,thick](Origin)--(a1);
\draw[->,blue,thick](Origin)--(a2);

\draw[thick](Origin)--(v0);
\draw[thick](v0)--(v1);
\draw[thick](Origin)--(v1);


\draw [shorten >=-4cm, red, thick, dashed] (2rho) to ($(2rho)+(a1)$);
\draw [shorten >=-4cm, red, thick, dashed] (2rho) to ($(2rho)+(a2)$);
\end{tikzpicture} 
\subcaption{$\overline{\PSL_2\times \PSL_2}(5)$ (Case-II.1)}
\label{PSLPSL-II.1}
\medskip
\end{minipage}
\begin{minipage}[b]{.45 \textwidth}
 \centering

\begin{tikzpicture}[scale=0.79]
\clip (-0.5,-0.5) rectangle (8.5,4.2); 

\coordinate (pi1) at (1,0);
\coordinate (pi2) at (0,1);
\coordinate (v0) at ($4*(pi2)$);
\coordinate (v1) at ($8*(pi1)$);
\coordinate (a1) at (2,0);
\coordinate (a2) at (0,2);
\coordinate (barycenter) at (512/273-32/9,64/9);

\coordinate (Origin) at (0,0);
\coordinate (2rho) at (2,2);

\coordinate (barycenter) at (24/7, 12/7);

\foreach \x  in {-8,-7,...,10}{
  \draw[help lines,dotted]
    (\x,-8) -- (\x,10);
}

\foreach \x  in {-8,-7,...,10}{
  \draw[help lines,dotted]
    (-8,\x) -- (10,\x);
}

\foreach \x  in {-8,-6,...,10}{
  \draw[help lines,thick,dotted]
    (\x,-8) -- (\x,10);
}

\foreach \x  in {-8,-6,...,10}{
  \draw[help lines,thick,dotted]
     (-8,\x) -- (10,\x);
}

\fill (Origin) circle (2pt) node[below left] {0};
\fill (pi1) circle (2pt) node[below right] {$\varpi_1$};
\fill (pi2) circle (2pt) node[below right] {$\varpi_2$};
\fill (a1) circle (2pt) node[below] {$\alpha_1$};
\fill (a2) circle (2pt) node[right] {$\alpha_2$};

\fill (2rho) circle (2pt) node[below] {$2\rho$};

\fill (v0) circle (2pt) node[right] {$2\alpha_2$};
\fill (v1) circle (2pt) node[below left] {$4\alpha_1$};

\fill (barycenter) circle (2pt) node[below] {$\textbf{bar}_{DH}(\Delta_{6})$};

\draw[->,blue,thick](Origin)--(pi1);
\draw[->,blue,thick](Origin)--(pi2); 
\draw[->,blue,thick](Origin)--(a1);
\draw[->,blue,thick](Origin)--(a2);

\draw[thick](Origin)--(v0);
\draw[thick](v0)--(v1);
\draw[thick](Origin)--(v1);


\draw [shorten >=-4cm, red, thick, dashed] (2rho) to ($(2rho)+(a1)$);
\draw [shorten >=-4cm, red, thick, dashed] (2rho) to ($(2rho)+(a2)$);
\end{tikzpicture} 
\subcaption{$\overline{\PSL_2\times \PSL_2}(6)$ (Case-II.2.1)}
\label{PSLPSL-II.2.1}
\medskip
\end{minipage}

\begin{minipage}[b]{.45 \textwidth}
 \centering

\begin{tikzpicture}[scale=0.79]
\clip (-0.5,-0.5) rectangle (6.5,4.2); 

\coordinate (pi1) at (1,0);
\coordinate (pi2) at (0,1);
\coordinate (v0) at ($4*(pi2)$);
\coordinate (v1) at ($4*(pi1)+2*(pi2)$);
\coordinate (v2) at ($6*(pi1)$);
\coordinate (a1) at (2,0);
\coordinate (a2) at (0,2);
\coordinate (barycenter) at (512/273-32/9,64/9);

\coordinate (Origin) at (0,0);
\coordinate (2rho) at (2,2);

\coordinate (barycenter) at (2.94027244149494, 1.86727209221097);

\foreach \x  in {-8,-7,...,10}{
  \draw[help lines,dotted]
    (\x,-8) -- (\x,10);
}

\foreach \x  in {-8,-7,...,10}{
  \draw[help lines,dotted]
    (-8,\x) -- (10,\x);
}

\foreach \x  in {-8,-6,...,10}{
  \draw[help lines,thick,dotted]
    (\x,-8) -- (\x,10);
}

\foreach \x  in {-8,-6,...,10}{
  \draw[help lines,thick,dotted]
     (-8,\x) -- (10,\x);
}

\fill (Origin) circle (2pt) node[below left] {0};
\fill (pi1) circle (2pt) node[below right] {$\varpi_1$};
\fill (pi2) circle (2pt) node[below right] {$\varpi_2$};
\fill (a1) circle (2pt) node[below] {$\alpha_1$};
\fill (a2) circle (2pt) node[right] {$\alpha_2$};

\fill (2rho) circle (2pt) node[left] {$2\rho$};

\fill (v0) circle (2pt) node[right] {$2\alpha_2$};
\fill (v1) circle (2pt) node[above] {$A_2$};
\fill (v2) circle (2pt) node[below left] {$3\alpha_1$};

\fill (barycenter) circle (2pt) node[below] {$\textbf{bar}_{DH}(\Delta_{7})$};

\draw[->,blue,thick](Origin)--(pi1);
\draw[->,blue,thick](Origin)--(pi2); 
\draw[->,blue,thick](Origin)--(a1);
\draw[->,blue,thick](Origin)--(a2);

\draw[thick](Origin)--(v0);
\draw[thick](v0)--(v1);
\draw[thick](v1)--(v2);
\draw[thick](Origin)--(v2);


\draw [shorten >=-4cm, red, thick, dashed] (2rho) to ($(2rho)+(a1)$);
\draw [shorten >=-4cm, red, thick, dashed] (2rho) to ($(2rho)+(a2)$);
\end{tikzpicture} 
\subcaption{$\overline{\PSL_2\times \PSL_2}(7)$ (Case-II.2.1.a)}
\label{PSLPSL-II.2.1.(a)}

\end{minipage}
\caption{Moment polytopes of Gorenstein Fano compactifications of $\PSL_2(\mathbb{C})\times\PSL_2(\mathbb{C})$.}
\label{PSLPSL-polytope}
\end{figure}

\clearpage

\subsubsection{Gorenstein Fano group compactification of $\SL_2(\mathbb{C}) \times \PSL_2(\mathbb{C})$}

The spherical weight lattice $\mathcal M$ is spanned by $\varpi_1 = (1, 0)$ and $\alpha_2=(0, 2)$, and 
its dual lattice $\mathcal N$ is spanned by $\alpha_1^{\vee} = (1, 0)$ and $\varpi_2^{\vee}=\Big(0, \frac{1}{2}\Big)$. 
The Weyl walls are given by $W_1=\{y=0\}$ and $W_2=\{x=0\}$.
The sum of positive roots is $2\rho=(2,2)$.


\begin{theorem}
There are 14 Gorenstein Fano equivariant compactifications of $\SL_2(\mathbb{C}) \times \PSL_2(\mathbb{C})$ up to isomorphism: two smooth compactifications and 12 singular compactifications. 
Their moment polytopes are given in the following Table~\ref{table:SL2*PSL2} and Figure~\ref{SLPSL-polytope}. 
Among them only one smooth compactification and three singular compactifications admit singular K\"{a}hler--Einstein metrics. 
{\rm 
\begin{table}[h]
{\renewcommand{\arraystretch}{1.2}
\footnotesize{
\begin{tabular}{|c|l|l|l|c|c|c|}\hline
$X$ & 	Case & Edges (except Weyl walls)  & Vertices	&	Smooth? & 	KE &  $\textbf{bar}(\Delta)$\\
\hline
$\overline{\SL_2\times \PSL_2}(1)$&	I.1.1 &	$y=4,\ x+y=5$  & $(0,4),(1,4),(5,0)$	& singular	&	Yes & $\Big(\frac{243}{112}, \frac{59}{28}\Big)$
\\ \hline
$\overline{\SL_2\times \PSL_2}(2)$	&	I.1.1.a &	$y=4,\ x+y=5,\ x+\frac{1}{2}y=4$  & $(0,4),(1,4),(3,2),(4,0)$ 	& singular	&	Yes & $\Big(\frac{12069}{6034}, \frac{13383}{6034}\Big)$
\\ \hline
$\overline{\SL_2\times \PSL_2}(3)$	&	I.1.1.b &	$y=4,\ x+y=5,\ x=3$  & $(0,4),(1,4),(3,2),(3,0)$ 	& singular	&	No & $\Big(\frac{2067}{1099}, \frac{2550}{1099}\Big)$
\\ \hline
$\overline{\SL_2\times \PSL_2}(4)$&	I.1.2 &	$y=4,\ 4x+3y=16$  & $(0,4),(1,4),(4,0)$	& singular	&	No & $\Big(\frac{1797}{1022}, \frac{1128}{511}\Big)$
\\ \hline
$\overline{\SL_2\times \PSL_2}(5)$	&	I.2.1 &	$y=4,\ x+\frac{1}{2}y=4$  & $(0,4),(2,4),(4,0)$ 	& singular	&	Yes & $\Big(\frac{99}{49}, \frac{128}{49}\Big)$
\\ \hline
$\overline{\SL_2\times \PSL_2}(6)$	&	I.2.1.a &	$y=4,\ x+\frac{1}{2}y=4,\ x=3$  & $(0,4),(2,4),(3,2),(3,0)$ 	& smooth	&	No & $\Big(\frac{17274}{8869}, \frac{23943}{8869}\Big)$
\\ \hline
$\overline{\SL_2\times \PSL_2}(7)$&	I.2.2 &	$y=4,\ 4x+y=12$  & $(0,4),(2,4),(3,0)$		& singular	&	No & $\Big(\frac{2817}{1631}, \frac{4548}{1631}\Big)$
\\ \hline
$\overline{\SL_2\times \PSL_2}(8)$	&	I.3 &	$y=4,\ x=3$  & $(0,4),(3,4),(3,0)$ 	& smooth	&	Yes & $\Big(\frac{9}{4},3\Big)$
\\ \hline
$\overline{\SL_2\times \PSL_2}(9)$	&	II.1	&	$4x+y=12$  & $(0,12),(3,0)$ 	& singular	&	No & $\Big(\frac{9}{7}, \frac{36}{7}\Big)$
\\ \hline
$\overline{\SL_2\times \PSL_2}(10)$	&	II.2.1 &	$2x+y=8$  & $(0,8),(4,0)$		& singular	&	No & $\Big(\frac{12}{7}, \frac{24}{7}\Big)$
\\ \hline
$\overline{\SL_2\times \PSL_2}(11)$	&	II.2.1.a &	$2x+y=8,\ x=3$  & $(0,8),(3,2),(3,0)$ 	& singular	&	No & $\Big(\frac{846}{511}, \frac{1797}{511}\Big)$
\\ \hline
$\overline{\SL_2\times \PSL_2}(12)$&	II.2.1.b	&	$2x+y=8,\ 4x+y=12$  & $(0,8),(3,2),(3,0)$ 	& singular	&	No & $\Big(\frac{4209}{2863}, \frac{10692}{2863}\Big)$
\\ \hline
$\overline{\SL_2\times \PSL_2}(13)$	&	II.2.1.c &	$2x+y=8,\ 3x+y=9$  & $(0,8),(1,6),(3,0)$		& singular	&	No & $\Big(\frac{10887}{8288}, \frac{30927}{8288}\Big)$
\\ \hline
$\overline{\SL_2\times \PSL_2}(14)$&	II.2.2 &	$8x+3y=24$  & $(0,8),(3,0)$ 	& singular	&	No & $\Big(\frac{9}{7}, \frac{24}{7}\Big)$
\\ \hline
\end{tabular}
}}
\caption{Gorenstein Fano equivariant compactifications of $\SL_2(\mathbb{C}) \times \PSL_2(\mathbb{C})$.}
\label{table:SL2*PSL2}
\end{table}}
\end{theorem}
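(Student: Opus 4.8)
The plan is to repeat, almost verbatim, the three-step analysis already carried out for the other rank-two groups. \emph{Step 1: enumerate the polytopes.} In the coordinates fixed above, any facet of the moment polytope other than the two Weyl walls $W_1 = \{y=0\}$ and $W_2 = \{x=0\}$ has a primitive outer normal $\nu_i = m_i\alpha_1^\vee + n_i\varpi_2^\vee = (m_i, n_i/2) \in \mathcal N$ with $m_i, n_i \geq 0$ (not both zero), and by Corollary~\ref{gorenstein polytope} the facet is then $F_i = \{\, m_i x + \tfrac{n_i}{2} y = 2m_i + n_i + 1 \,\}$, subject to the convexity constraint forcing $\nu_i$ into the appropriate cone relative to the valuation cone of $\SL_2(\mathbb C)\times\PSL_2(\mathbb C)$. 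Starting from the facet $F_1$ meeting $W_2$, I would split into Case~I ($F_1 \perp W_2$, hence $F_1 = \{y=4\}$) and Case~II ($F_1$ not orthogonal to $W_2$, hence $F_1 \cap W_2$ a lattice vertex $(0, \cdot)$), and then, reading off the admissible positions of the next vertex $A_1 = F_1 \cap F_2$ from the lattice and primitivity conditions, branch recursively until the polygon closes along $W_1$. Removing the few coincidences (one sub-case of Case~I degenerates onto another, exactly as in the $\SL_2\times\SL_2$ analysis; here there is no factor-swap symmetry since the two factors are non-isomorphic), this produces precisely the fourteen polytopes $\Delta_1,\dots,\Delta_{14}$ listed in Table~\ref{table:SL2*PSL2} and Figure~\ref{SLPSL-polytope}, with their edges and vertices.

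\emph{Step 2: decide smoothness.} For each $\Delta_j$ I would assemble $\Delta_j^{toric} = \bigcup_{w\in W} w\Delta_j$ under $W = \mathbb Z/2 \times \mathbb Z/2$ and apply Proposition~\ref{AK}: exactly $\Delta_6$ and $\Delta_8$ are Delzant with no vertex on a Weyl wall, so $\overline{\SL_2\times \PSL_2}(6)$ and $\overline{\SL_2\times \PSL_2}(8)$ are smooth and the other twelve are singular. The rectangle $\Delta_8 = [0,3]\times[0,4]$ corresponds to $\overline{\SL_2\times \PSL_2}(8) \cong \mathbb Q^3 \times \mathbb P^3$ by Example~\ref{compactifications of SL2 and PSL2}, which being homogeneous carries a K\"ahler--Einstein metric; many of the other $\Delta_j$ arise as iterated blow-ups of it (or of intermediate smooth members) along closed orbits attached to vertices, which is useful for naming the varieties but irrelevant to the K-stability test.

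\emph{Step 3: test the barycenter.} For each $\Delta_j$ I would compute $\textbf{bar}_{DH}(\Delta_j)$ with respect to the Duistermaat--Heckman measure $\prod_{\alpha\in\Phi^+}\kappa(\alpha,p)^2\,dp$, which in these coordinates is a positive constant times $x^2 y^2\, dx\,dy$; the constant is immaterial, so this is the genuine barycenter of the quartic weight $x^2 y^2$ over $\Delta_j$, an exact point in $\mathbb Q^2$. By Theorem~\ref{KE criterion for Q-Fano group compactifications}, since $2\rho = (2,2)$ and $\mathcal C^+$ is the closed first quadrant, $\overline{\SL_2\times \PSL_2}(j)$ admits a (singular) K\"ahler--Einstein metric if and only if both coordinates of $\textbf{bar}_{DH}(\Delta_j)$ are strictly larger than $2$; carrying this out gives KE precisely for $j \in \{1,2,5,8\}$, i.e. the smooth member $\overline{\SL_2\times \PSL_2}(8)$ and the three singular members $j = 1,2,5$. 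Finally, the unique K-unstable smooth member $\overline{\SL_2\times \PSL_2}(6)$ feeds into Theorem~\ref{greatest Ricci lower bounds: Gorenstein Fano equivariant compactifications}: using Theorem~\ref{Greatest Ricci lower bounds} and Corollary~\ref{formula for greatest Ricci lower bounds} I would locate the point $Q$ where the ray from $\textbf{bar}_{DH}(\Delta_6)$ toward $2\rho$ exits $\Delta_6 - \mathcal C^+$ and read off $R(\overline{\SL_2\times \PSL_2}(6)) = |\overrightarrow{AQ}|/|\overrightarrow{CQ}| = \tfrac{8869}{9333}$.

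The only real obstacle is computational endurance. There is nothing conceptually new beyond the earlier cases, but the case bookkeeping in Step~1 --- showing the list of fourteen is both complete and irredundant --- and the exact barycenter integrals in Step~3, whose denominators grow quickly (e.g. $6034$, $8288$, $8869$), must be done carefully; a symbolic computation is essentially forced here, and the risk is purely arithmetical error rather than any missing idea.
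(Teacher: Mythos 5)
Your plan reproduces the paper's proof: the same facet parametrization $F_i=\{m_ix+\tfrac{n_i}{2}y=2m_i+n_i+1\}$ with the case split on whether the facet meeting $W_2$ is orthogonal to it, the same smoothness test via Proposition~\ref{AK} on $\Delta^{toric}$, the identification $\overline{\SL_2\times\PSL_2}(8)\cong\mathbb Q^3\times\mathbb P^3$ from Example~\ref{compactifications of SL2 and PSL2}, and the barycenter criterion (both coordinates $>2$) with the DH weight proportional to $x^2y^2$, yielding KE exactly for $j\in\{1,2,5,8\}$ and $R(\overline{\SL_2\times\PSL_2}(6))=\tfrac{8869}{9333}$. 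This matches the paper's argument and conclusions; the only work left is the routine case bookkeeping and exact integrals you already flag.
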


\begin{proof}
Let $m_i\varpi_1^{\vee}+n_i\alpha_2^{\vee}=(m_i,2n_i)\in\mathcal N$
be the primitive outer normal vector of the facet 
$$
F_i= \left \{m_ix+\frac{n_i}{2}y=2m_i+n_i+1 \right \},
$$
where $m_i\geq0, n_i\geq0$ by the convexity of the polytope.\\
For the facet $F_1$ which intersects the Weyl wall $W_2$, we have two cases:
\begin{itemize}
\item {\bf Case-I:} $F_1$ is orthogonal to $W_2$.\\
$F_1\perp W_2$ $\Rightarrow$ $F_1=\{y=4\}$\\
$F_1\cap F_2=:A_1=\Big(\frac{2m_2-n_2+1}{m_2},4\Big)=\Big(2+\frac{-n_2+1}{m_2},4\Big)\in \mathcal M$
\begin{itemize}
\item {\bf Case-I.1}: $A_1=(1,4)$ $\Rightarrow$ $m_2+1=n_2$
\begin{itemize}
\item {\bf Case-I.1.1}: $(m_2,n_2)=(1,2)$ and $F_2=\{x+y=5\}$, $A_2=(5,0)$\\ {\small\bf($\Delta_{1}$,\ Figure ~\ref{SLPSL-I.1.1})}.
\item {\bf Case-I.1.1.a}: $A_2=(3,2)$ and $F_3=\{x+\frac{1}{2}y=4\}$ {\small\bf($\Delta_{2}$,\ Figure ~\ref{SLPSL-I.1.1.(a)})}.
\item {\bf Case-I.1.1.b}: $A_2=(3,2)$ and $F_3=\{x=3\}$ {\small\bf($\Delta_{3}$,\ Figure ~\ref{SLPSL-I.1.1.(b)})}.
\item {\bf Case-I.1.2}: $(m_2,n_2)=(2,3)$ and $F_2=\{4x+3y=16\}$, $A_2=(4,0)$\\ {\small\bf($\Delta_{4}$,\ Figure ~\ref{SLPSL-I.1.2})}.
\end{itemize}
\item {\bf Case-I.2}: $A_1=(2,4)$ $\Rightarrow$ $n_2=1$
\begin{itemize}
\item {\bf Case-I.2.1}: $(m_2,n_2)=(1,1)$ and $F_2=\Big\{x+\frac{1}{2}y=4\Big\}$, $A_2=(4,0)$\\ {\small\bf($\Delta_{5}$,\ Figure ~\ref{SLPSL-I.2.1})}.
\item {\bf Case-I.2.1.a}: $A_2=(3,2)$ and $F_3=\{x=3\}$ {\small\bf($\Delta_{6}$,\ Figure ~\ref{SLPSL-I.2.1.(a)})}.
\item {\bf Case-I.2.2}: $(m_2,n_2)=(2,1)$ and $F_2=\{4x+y=12\}$, $A_2=(3,0)$\\ {\small\bf($\Delta_{7}$,\ Figure ~\ref{SLPSL-I.2.2})}.
\end{itemize}
\item {\bf Case-I.3}: $A_1=(3,4)$ $\Rightarrow$ $m_2+n_2=1$ $\Rightarrow$ $(m_2,n_2)=(1,0)$ and $F_2=\{x=3\}$\\ {\small\bf($\Delta_{8}$,\ Figure ~\ref{SLPSL-I.3})}.
\end{itemize}
\item {\bf Case-II:} $F_1$ is not orthogonal to $W_2$.\\
$F_1\cap W_2=:A_1=\Big(0,2\frac{2m_1+n_1+1}{n_1}\Big)=\Big(0,2(1+\frac{2m_1+1}{n_1})\Big)\in \mathcal M$ $\Rightarrow$ $\frac{2m_1+1}{n_1}\in\mathbb N_+$\\
Note that $\emptyset\neq F_i\cap W_1=\Big(0,2+\frac{n_i+1}{m_i}\Big)\in \mathcal M$ $\Rightarrow$ $\frac{n_i+1}{m_i}\geq1$$\Rightarrow$ $n_i+1\geq m_i$
\begin{itemize}
\item {\bf Case-II.1}: $\frac{2m_1+1}{n_1}=5$ $\Rightarrow$ $(m_1,n_1)=(2,1)$ and $F_1=\{4x+y=12\}$, $A_2=(3,0)$\\ {\small\bf($\Delta_{9}$,\ Figure ~\ref{SLPSL-II.1})}.
\item {\bf Case-II.2}: $\frac{2m_1+1}{n_1}=3$ $\Rightarrow$ $(m_1,n_1)=(1,1)$ or $(m_1,n_1)=(4,3)$
\begin{itemize}
\item {\bf Case-II.2.1}: $(m_1,n_1)=(1,1)$ and $F_1=\{2x+y=8\}$, $A_2=(4,0)$\\ {\small\bf($\Delta_{10}$,\ Figure ~\ref{SLPSL-II.2.1})}.
\item {\bf Case-II.2.1.a}: $A_2=(3,2)$ and $F_2=\{x=3\}$ {\small\bf($\Delta_{11}$,\ Figure ~\ref{SLPSL-II.2.1.(a)})}.
\item {\bf Case-II.2.1.b}: $A_2=(2,4)$ and $F_2=\{4x+y=12\}$ {\small\bf($\Delta_{12}$,\ Figure ~\ref{SLPSL-II.2.1.(b)})}.
\item {\bf Case-II.2.1.c}: $A_2=(1,6)$ and $F_2=\{3x+y=9\}$ {\small\bf($\Delta_{3}$,\ Figure ~\ref{SLPSL-II.2.1.(c)})}.
\item {\bf Case-II.2.2}: $(m_1,n_1)=(4,3)$ and $F_1=\{8x+3y=24\}$, $A_2=(3,0)$\\ {\small\bf($\Delta_{14}$,\ Figure ~\ref{SLPSL-II.2.2})}.
\end{itemize}
\end{itemize}
\end{itemize}

In the Case-I.2.1.a, 
the toric polytope $\Delta_6^{toric}$ formed by the Weyl group action from the moment polytope $\Delta_6$ is a Delzant polytope and no vertices of $\Delta_6^{toric}$ lie in Weyl walls, 
hence $\overline{\SL_2\times \PSL_2}(6)$ is smooth by Proposition~\ref{AK}.
Since the barycenter of the moment polytope $\Delta_{6}$ with respect to the Duistermaat--Heckman measure is given by
$$
\textbf{bar}_{DH}(\Delta_{6}) = \left(\frac{17274}{8869}, \frac{23943}{8869}\right) \approx (1.948, 2.700),
$$
it is not in the relative interior of the translated cone $2 \rho + \mathcal C^+$.
Thus $\overline{\SL_2\times \PSL_2}(6)$ does not admit any singular K\"{a}hler--Einstein metric by Theorem~\ref{KE criterion for Q-Fano group compactifications}.

Let $Q$ be the point at which the half-line starting from the barycenter $C=\textbf{bar}_{DH}(\Delta_{6})$ in the direction of $A=(2, 2)$ intersects the boundary of $(\Delta_{6} - \mathcal C^+)$. 
Considering the line $x=3$ giving a part of $\partial(\Delta_{6} - \mathcal C^+)$ and the half-line $\overrightarrow{CA}$, 
by Corollary~\ref{formula for greatest Ricci lower bounds}, the greatest Ricci lower bound is
$$
R(\overline{\SL_2\times \PSL_2}(6)) = \displaystyle \frac{\overline{AQ}}{\overline{CQ}} = \frac{1}{3-\frac{17274}{8869}} = \frac{8869}{9333} \approx 0.9503 
.
$$

In the Case-I.3,
the variety $\overline{\SL_2\times \PSL_2}(8)$ is isomorphic to $\mathbb Q^3 \times \mathbb P^3$ from Example~\ref{compactifications of SL2 and PSL2}. 
As $\overline{\SL_2\times \PSL_2}(8)$ is a homogeneous variety, it naturally admits a K\"{a}hler--Einstein metric. 
\end{proof}

\clearpage

\begin{figure}[b]

\begin{minipage}[b]{.45 \textwidth}
 \centering

 
\subcaption{$\overline{\SL_2\times \PSL_2}(14)$ (Case-II.2.2)}
\label{SLPSL-II.2.2}

\end{minipage}
\caption{Moment polytopes of Gorenstein Fano compactifications of $\SL_2(\mathbb{C}) \times \PSL_2(\mathbb{C})$-(ii).}
\label{SLPSL-polytope}
\end{figure}

\clearpage 

\subsubsection{Gorenstein Fano equivariant compactifications of $\SO_4(\mathbb C)$ }
The spherical weight lattice $\mathcal M$ of $\SO_4(\mathbb C)$ is spanned by $\varpi_1 + \varpi_2 = (1, 1)$ and $- \varpi_1 + \varpi_2 = (-1, 1)$, and its dual lattice $\mathcal N$ is spanned by $\varpi_1^{\vee}+\varpi_2^{\vee}=\Big(\frac{1}{2}, \frac{1}{2}\Big)$ and $- \varpi_1^{\vee} + \varpi_2^{\vee}=\Big(- \frac{1}{2}, \frac{1}{2}\Big)$.
The Weyl walls are given by $W_1=\{y=0\}$ and $W_2=\{x=0\}$.
The sum of positive roots is $2\rho=(2,2)$.

In \cite[Section 7.1]{LTZ20}, Li--Tian--Zhu obtained the following results.
Note that they used different bases for $\mathcal M$ and $\mathcal N$.
In fact, taking coordinate changes via the linear transformation given by a matrix 
$
\begin{pmatrix}
\frac{1}{2} & \frac{1}{2}\\
-\frac{1}{2} & \frac{1}{2}
\end{pmatrix}
$,
the bases $\varpi_1 + \varpi_2 = (1, 1)$ and $- \varpi_1 + \varpi_2 = (-1, 1)$ of $\mathcal M$ can be transformed as $(1, 0)$ and $(0, 1)$, respectively, as in \cite[Section 7.1]{LTZ20}.

\begin{theorem} \cite[Section 7.1]{LTZ20}
There are six Gorenstein Fano equivariant compactifications of $\SO_4(\mathbb C)$ up to isomorphism: three smooth compactifications and three singular compactifications. 
Their moment polytopes are given in the following Table~\ref{table:SO4} and Figure~\ref{SO4-polytope}. 
Among them, the only one compactification in each cases admits a K\"{a}hler--Einstein metric.
{\rm
\begin{table}[h]
{\renewcommand{\arraystretch}{1.2}
\small{
\begin{tabular}{|c|l|l|l|c|c|c|}\hline
$X$ & 	Case & Edges (except Weyl walls)  & Vertices	&	Smooth? & 	KE & $\textbf{bar}(\Delta)$\\
\hline
$\overline{\SO_4}(1)$	&	I.1.1 &	$y=3,\ x+3y=10$  & $(0,3),(1,3),(10,0)$	& singular	&	No & 	$\Big(\frac{8229}{1918}, \frac{1368}{959}\Big)$
\\ \hline
$\overline{\SO_4}(2)$	&	I.1.2 &	$y=3,\ x+3y=10,\ x+y=6$  & $(0,3),(1,3),(4,2),(6,0)$ 	& smooth	&	No & 	$\Big(\frac{467331}{150514}, \frac{125928}{75257}\Big)$
\\ \hline
$\overline{\SO_4}(3)$	&	I.1.3 &	$y=3,\ 3x+5y=18$  & $(0,3),(1,3),(6,0)$ 	& singular	&	No & 	$\Big(\frac{1341}{518}, \frac{396}{259}\Big)$
\\ \hline
$\overline{\SO_4}(4)$	&	I.2.1 &	$y=3,\ x+y=6$  & $(0,3),(3,3),(6,0)$		& smooth	&	No & 	$\Big(\frac{297}{98}, \frac{96}{49}\Big)$
\\ \hline 
$\overline{\SO_4}(5)$&	I.2.2 &	$y=3,\ x=3$  & $(0,3),(3,3),(3,0)$ 	& singular	&	Yes &  	$\Big(\frac{9}{4}, \frac{9}{4}\Big)$
\\ \hline
$\overline{\SO_4}(6)$&	II &	$x+y=6$  & $(0,6),(6,0)$ 	& smooth	&	Yes &   	$\Big(\frac{18}{7}, \frac{18}{7}\Big)$
\\ \hline
\end{tabular}
}}
\caption{Gorenstein Fano equivariant compactifications of $\SO_4(\mathbb C)$.}
\label{table:SO4}
\end{table}}
\end{theorem}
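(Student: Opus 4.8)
The plan is to reproduce for $G=\SO_4(\mathbb C)$ the three-step procedure already used for the other rank-two groups in this section. First I would set up the combinatorics: $\mathcal M$ is spanned by $(1,1)$ and $(-1,1)$, $\mathcal N$ by $\bigl(\tfrac12,\tfrac12\bigr)$ and $\bigl(-\tfrac12,\tfrac12\bigr)$, the Weyl walls are $W_1=\{y=0\}$ and $W_2=\{x=0\}$, and $2\rho=(2,2)$. A general facet $F_i$ of the moment polytope has primitive outer normal $m_i\bigl(\tfrac12,\tfrac12\bigr)+n_i\bigl(-\tfrac12,\tfrac12\bigr)\in\mathcal N$, which after normalization gives the affine equation of $F_i$ together with convexity constraints on $(m_i,n_i)$. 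By Corollary~\ref{gorenstein polytope} the polytope has the origin as a vertex and $W_1,W_2$ among its facets, so it is determined by the remaining facets. Singling out the facet $F_1$ meeting $W_2$, I would split into Case~I ($F_1\perp W_2$) and Case~II ($F_1$ not orthogonal to $W_2$) and, in each case, run through the finitely many admissible lattice points that can serve as the first vertex $A_1=F_1\cap F_2$ (and, when a third facet is forced by the lattice and convexity conditions, the second vertex $A_2$), discarding the repetitions coming from the Weyl-group action and from obvious isomorphisms. This produces exactly the six polytopes $\Delta_1,\dots,\Delta_6$ of Table~\ref{table:SO4} and shows there are no others.

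Next I would decide smoothness: for each $i$ form the toric polytope $\Delta_i^{toric}=\bigcup_{w\in W}w\Delta_i$ and apply Proposition~\ref{AK}. When $\Delta_i^{toric}$ violates the Delzant condition, $\overline{\SO_4}(i)$ is singular; when it is Delzant with no vertex lying on a Weyl wall, $\overline{\SO_4}(i)$ is smooth. This separates $\overline{\SO_4}(2),\overline{\SO_4}(4),\overline{\SO_4}(6)$ (smooth) from $\overline{\SO_4}(1),\overline{\SO_4}(3),\overline{\SO_4}(5)$ (singular). Finally, since $\SO_4(\mathbb C)$ has type $\mathsf A_1\times\mathsf A_1$, the Duistermaat--Heckman density $\prod_{\alpha\in\Phi^+}\kappa(\alpha,p)^2\,dp$ is a degree-four polynomial in the $\mathcal M\otimes\mathbb R$-coordinates; integrating it over each $\Delta_i$ yields the barycenters $\textbf{bar}_{DH}(\Delta_i)$ listed in the table, and I would then test whether each lies in $2\rho+\Xi$, where $\Xi$ is the relative interior of the cone $\mathcal C^+$ spanned by the two positive roots. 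Only $\overline{\SO_4}(5)$ among the singular cases and $\overline{\SO_4}(6)$ among the smooth ones pass this test, so by Theorem~\ref{KE criterion for Q-Fano group compactifications} these are precisely the compactifications carrying a (singular) K\"{a}hler--Einstein metric; for the K-unstable smooth cases $\overline{\SO_4}(2)$ and $\overline{\SO_4}(4)$ one further reads off $R(X)$ from Theorem~\ref{Greatest Ricci lower bounds} and Corollary~\ref{formula for greatest Ricci lower bounds} as the ratio determined by the intersection of the ray through $\textbf{bar}_{DH}(\Delta_i)$ in the direction of $2\rho$ with $\partial(\Delta_i-\mathcal C^+)$.

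The main obstacle is essentially bookkeeping: making the case analysis of the first step manifestly exhaustive, and verifying that none of the six resulting polytopes are identified under a lattice automorphism preserving the restricted Weyl chamber, together with the rather heavy rational integrals needed for the barycenters. Since the statement already appears in \cite[Section~7.1]{LTZ20}, an equally valid route is to match their coordinate conventions to ours via the change of basis given by $\left(\begin{smallmatrix}\frac12&\frac12\\-\frac12&\frac12\end{smallmatrix}\right)$, as recalled above, and simply import their computation.
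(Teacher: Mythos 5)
Your proposal is correct, and its fallback option is exactly what the paper does: for this particular group the paper does not redo the classification but cites \cite[Section~7.1]{LTZ20}, records the change of basis $\left(\begin{smallmatrix}\frac12&\frac12\\-\frac12&\frac12\end{smallmatrix}\right)$ relating the two coordinate conventions, and then only carries out the new computations, namely the greatest Ricci lower bounds for the two K-unstable smooth compactifications $\overline{\SO_4}(2)$ and $\overline{\SO_4}(4)$ via Corollary~\ref{formula for greatest Ricci lower bounds} (giving $\tfrac{75257}{99843}$ and $\tfrac{49}{51}$). Your primary route---rerunning the facet/vertex case analysis of Corollary~\ref{gorenstein polytope}, the Delzant test of Proposition~\ref{AK}, and the barycenter test of Theorem~\ref{KE criterion for Q-Fano group compactifications}---is the same machinery the paper uses for every other rank-two group, so it would yield a self-contained proof rather than an imported one; the price is the bookkeeping you already flag. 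One point to watch if you do carry it out: the spherical weight lattice of $\SO_4(\mathbb C)$ is spanned by $(1,1)$ and $(-1,1)$, i.e.\ it consists of points whose two coordinates have equal parity, so the admissibility condition ``$A_1=F_1\cap F_2\in\mathcal M$'' is stricter than in the $\SL_2\times\SL_2$ and $\PSL_2\times\PSL_2$ cases and the list of candidate vertices must be pruned accordingly (vertices lying on Weyl walls, such as $(0,3)$ or $(10,0)$, are exempt from this lattice condition, as in the other classifications).
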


\begin{proof}[Proof of Theorem~\ref{greatest Ricci lower bounds: Gorenstein Fano equivariant compactifications} for K-unstable smooth compactifications of $\SO_4(\mathbb C)$]

(1) In the Case I.1.2 (Case 1.1.2 in \cite[Section 7.1]{LTZ20}), $\overline{\SO_4}(2)$ is a smooth compactification of $\SO_4(\mathbb C)$ corresponding to Figure~\ref{SO4-I.1.2} and the barycenter of its moment polytope is
$$(\bar{x}, \bar{y}) = \left(\frac{467331}{150514}, \frac{125928}{75257} \right) \approx (3.1049, 0.7538).$$ 
Let $Q$ be the point at which the half-line starting from the barycenter $C$ in the direction of $A=(2, 2)$ intersects the boundary of the moment polytope. 
Considering the line $y=3$ and the half-line $\overrightarrow{CA}$, 
we can compute 
$Q = \left(-\frac{67959}{49172}, 3 \right)$. 
Thus, we obtain the greatest Ricci lower bound $R = \displaystyle \frac{\overline{AQ}}{\overline{CQ}} = \frac{75257}{99843} \approx 0.7538$ 
by Corollary~\ref{formula for greatest Ricci lower bounds}.

(2) In the Case I.2.1 (Case 1.2.1 in \cite[Section 7.1]{LTZ20}), $\overline{\SO_4}(4)$ is a smooth compactification of $\SO_4(\mathbb C)$ corresponding to Figure~\ref{SO4-I.2.1} and the barycenter of its moment polytope is
$$(\bar{x}, \bar{y}) = \left(\frac{297}{98}, \frac{96}{49} \right) \approx (3.0306, 1.9592).$$
Let $Q$ be the point at which the half-line starting from the barycenter $C$ in the direction of $A=(2, 2)$ intersects the boundary of the moment polytope. 
Considering the line $y=3$ and the half-line $\overrightarrow{CA}$, 
we compute 
$Q = \left(-\frac{93}{4}, 3 \right)$. 
Thus, we obtain the greatest Ricci lower bound $R = \displaystyle \frac{\overline{AQ}}{\overline{CQ}} = \frac{49}{51} \approx 0.9608$
by Corollary~\ref{formula for greatest Ricci lower bounds}.
\end{proof}
\begin{figure}[h]

\begin{minipage}[b]{.45 \textwidth}
 \centering

\begin{tikzpicture}
\clip (-0.5,-0.5) rectangle (10.5,3.5); 

\coordinate (pi1) at (1,0);
\coordinate (pi2) at (0,1);
\coordinate (v0) at (0,3);
\coordinate (v1) at (1,3);
\coordinate (v2) at (10,0);
\coordinate (a1) at (2,0);
\coordinate (a2) at (0,2);

\coordinate (Origin) at (0,0);
\coordinate (2rho) at (2,2);

\coordinate (barycenter) at (4.29040667361835, 1.42648592283629);

\foreach \x  in {-8,-7,...,11}{
  \draw[help lines,dotted]
    (\x,-8) -- (\x,11);
}

\foreach \x  in {-8,-7,...,11}{
  \draw[help lines,dotted]
    (-8,\x) -- (11,\x);
}

\begin{scope}[y=(45:1)]
\foreach \y  in {-12,-10,...,16}{
  \draw[help lines,thick,dotted]
   [rotate=45]  (-12,\y) -- (16,\y) ;
}
\foreach \y  in {-12,-10,...,12}{
  \draw[help lines,thick,dotted]
   [rotate=-45]  (-12,\y) -- (16,\y) ;
}
\end{scope}

\fill (Origin) circle (2pt) node[below left] {0};
\fill (pi1) circle (2pt) node[below right] {$\varpi_1$};
\fill (pi2) circle (2pt) node[below right] {$\varpi_2$};
\fill (a1) circle (2pt) node[below] {$\alpha_1$};
\fill (a2)  node[right] {$\alpha_2$};

\fill (2rho) circle (2pt) node[left] {$2\rho$};

\fill (v1) circle (2pt) node[right] {$A_1$};
\fill (v2) circle (2pt) node[below] {$10\varpi_1$};

\fill (barycenter) circle (2pt) node[below] {$\textbf{bar}_{DH}(\Delta_{1})$};

\draw[->,blue,thick](Origin)--(pi1);
\draw[->,blue,thick](Origin)--(pi2); 
\draw[->,blue,thick](Origin)--(a1);
\draw[->,blue,thick](Origin)--(a2);

\draw[thick](Origin)--(v0);
\draw[thick](v0)--(v1);
\draw[thick](v1)--(v2);
\draw[thick](Origin)--(v2);
\draw[thick](v1)--(v2);

\aeMarkRightAngle[size=6pt](Origin,v0,v1)

\draw [shorten >=-10cm, red, thick, dashed] (2rho) to ($(2rho)+(a1)$);
\draw [shorten >=-4cm, red, thick, dashed] (2rho) to ($(2rho)+(a2)$);
\end{tikzpicture} 
\subcaption{$\overline{\SO_4}(1)$ (Case-I.1.1)}
\label{SO4-I.1.1}
\medskip
\end{minipage}

\begin{minipage}[b]{.45 \textwidth}
 \centering

\begin{tikzpicture}
\clip (-0.5,-0.5) rectangle (6.5,3.5); 

\coordinate (pi1) at (1,0);
\coordinate (pi2) at (0,1);
\coordinate (v0) at ($3*(pi2)$);
\coordinate (v1) at ($1*(pi1)+3*(pi2)$);
\coordinate (v2) at ($4*(pi1)+2*(pi2)$);
\coordinate (v3) at ($6*(pi1)$);
\coordinate (a1) at (2,0);
\coordinate (a2) at (0,2);

\coordinate (Origin) at (0,0);
\coordinate (2rho) at (2,2);

\coordinate (barycenter) at (3.10490054081348, 1.67330613763504);

\foreach \x  in {-8,-7,...,9}{
  \draw[help lines,dotted]
    (\x,-8) -- (\x,9);
}

\foreach \x  in {-8,-7,...,9}{
  \draw[help lines,dotted]
    (-8,\x) -- (9,\x);
}

\begin{scope}[y=(45:1)]
\foreach \y  in {-12,-10,...,16}{
  \draw[help lines,thick,dotted]
   [rotate=45]  (-12,\y) -- (16,\y) ;
}
\foreach \y  in {-12,-10,...,12}{
  \draw[help lines,thick,dotted]
   [rotate=-45]  (-12,\y) -- (16,\y) ;
}
\end{scope}

\fill (Origin) circle (2pt) node[below left] {0};
\fill (pi1) circle (2pt) node[below right] {$\varpi_1$};
\fill (pi2) circle (2pt) node[below right] {$\varpi_2$};
\fill (a1) circle (2pt) node[below] {$\alpha_1$};
\fill (a2) circle (2pt) node[right] {$\alpha_2$};

\fill (2rho) circle (2pt) node[left] {$2\rho$};

\fill (v1) circle (2pt) node[right] {$A_1$};
\fill (v2) circle (2pt) node[right] {$A_2$};
\fill (v3) circle (2pt) node[below] {$6\varpi_1$};

\fill (barycenter) circle (2pt) node[below] {$\textbf{bar}_{DH}(\Delta_{2})$};

\draw[->,blue,thick](Origin)--(pi1);
\draw[->,blue,thick](Origin)--(pi2); 
\draw[->,blue,thick](Origin)--(a1);
\draw[->,blue,thick](Origin)--(a2);

\draw[thick](Origin)--(v0);
\draw[thick](v0)--(v1);
\draw[thick](v1)--(v2);
\draw[thick](v2)--(v3);
\draw[thick](Origin)--(v3);

\aeMarkRightAngle[size=6pt](Origin,v0,v1)

\draw [shorten >=-4cm, red, thick, dashed] (2rho) to ($(2rho)+(a1)$);
\draw [shorten >=-4cm, red, thick, dashed] (2rho) to ($(2rho)+(a2)$);
\end{tikzpicture} 
\subcaption{$\overline{\SO_4}(2)$ (Case-I.1.2)}
\label{SO4-I.1.2}
\medskip
\end{minipage}
\begin{minipage}[b]{.45 \textwidth}
 \centering

\begin{tikzpicture}
\clip (-0.5,-0.5) rectangle (6.5,3.5); 

\coordinate (pi1) at (1,0);
\coordinate (pi2) at (0,1);
\coordinate (v0) at ($3*(pi2)$);
\coordinate (v1) at ($1*(pi1)+3*(pi2)$);

\coordinate (v3) at ($6*(pi1)$);
\coordinate (a1) at (2,0);
\coordinate (a2) at (0,2);

\coordinate (Origin) at (0,0);
\coordinate (2rho) at (2,2);

\coordinate (barycenter) at (2.58880308880309, 1.52895752895753);

\foreach \x  in {-8,-7,...,9}{
  \draw[help lines,dotted]
    (\x,-8) -- (\x,9);
}

\foreach \x  in {-8,-7,...,9}{
  \draw[help lines,dotted]
    (-8,\x) -- (9,\x);
}
\begin{scope}[y=(45:1)]
\foreach \y  in {-12,-10,...,16}{
  \draw[help lines,thick,dotted]
   [rotate=45]  (-12,\y) -- (16,\y) ;
}
\foreach \y  in {-12,-10,...,12}{
  \draw[help lines,thick,dotted]
   [rotate=-45]  (-12,\y) -- (16,\y) ;
}
\end{scope}

\fill (Origin) circle (2pt) node[below left] {0};
\fill (pi1) circle (2pt) node[below right] {$\varpi_1$};
\fill (pi2) circle (2pt) node[below right] {$\varpi_2$};
\fill (a1) circle (2pt) node[below] {$\alpha_1$};
\fill (a2) circle (2pt) node[right] {$\alpha_2$};

\fill (2rho) circle (2pt) node[left] {$2\rho$};

\fill (v1) circle (2pt) node[right] {$A_1$};
\fill (v3) circle (2pt) node[below] {$6\varpi_1$};

\fill (barycenter) circle (2pt) node[below] {$\textbf{bar}_{DH}(\Delta_{3})$};

\draw[->,blue,thick](Origin)--(pi1);
\draw[->,blue,thick](Origin)--(pi2); 
\draw[->,blue,thick](Origin)--(a1);
\draw[->,blue,thick](Origin)--(a2);

\draw[thick](Origin)--(v0);
\draw[thick](v0)--(v1);
\draw[thick](v1)--(v3);
\draw[thick](Origin)--(v3);

\aeMarkRightAngle[size=6pt](Origin,v0,v1)

\draw [shorten >=-4cm, red, thick, dashed] (2rho) to ($(2rho)+(a1)$);
\draw [shorten >=-4cm, red, thick, dashed] (2rho) to ($(2rho)+(a2)$);
\end{tikzpicture} 
\subcaption{$\overline{\SO_4}(3)$ (Case-I.1.3)}
\label{SO4-I.1.3}
\medskip
\end{minipage}

\begin{minipage}[b]{.45 \textwidth}
 \centering

\begin{tikzpicture}
\clip (-0.5,-0.5) rectangle (6.5,3.5); 

\coordinate (pi1) at (1,0);
\coordinate (pi2) at (0,1);
\coordinate (v0) at ($3*(pi2)$);
\coordinate (v1) at ($3*(pi1)+3*(pi2)$);
\coordinate (v2) at ($6*(pi1)$);
\coordinate (a1) at (2,0);
\coordinate (a2) at (0,2);

\coordinate (Origin) at (0,0);
\coordinate (2rho) at (2,2);

\coordinate (barycenter) at (3.03061224489796, 1.95918367346939);

\foreach \x  in {-8,-7,...,9}{
  \draw[help lines,dotted]
    (\x,-8) -- (\x,9);
}

\foreach \x  in {-8,-7,...,9}{
  \draw[help lines,dotted]
    (-8,\x) -- (9,\x);
}
\begin{scope}[y=(45:1)]
\foreach \y  in {-12,-10,...,16}{
  \draw[help lines,thick,dotted]
   [rotate=45]  (-12,\y) -- (16,\y) ;
}
\foreach \y  in {-12,-10,...,12}{
  \draw[help lines,thick,dotted]
   [rotate=-45]  (-12,\y) -- (16,\y) ;
}
\end{scope}

\fill (Origin) circle (2pt) node[below left] {0};
\fill (pi1) circle (2pt) node[below right] {$\varpi_1$};
\fill (pi2) circle (2pt) node[below right] {$\varpi_2$};
\fill (a1) circle (2pt) node[below] {$\alpha_1$};
\fill (a2) circle (2pt) node[right] {$\alpha_2$};

\fill (2rho) circle (2pt) node[left] {$2\rho$};

\fill (v1) circle (2pt) node[right] {$A_1$};
\fill (v2) circle (2pt) node[below] {$6\varpi_1$};

\fill (barycenter) circle (2pt) node[below] {$\textbf{bar}_{DH}(\Delta_{4})$};

\draw[->,blue,thick](Origin)--(pi1);
\draw[->,blue,thick](Origin)--(pi2); 
\draw[->,blue,thick](Origin)--(a1);
\draw[->,blue,thick](Origin)--(a2);

\draw[thick](Origin)--(v0);
\draw[thick](v0)--(v1);
\draw[thick](v1)--(v2);
\draw[thick](Origin)--(v2);
\draw[thick](v1)--(v2);

\aeMarkRightAngle[size=6pt](Origin,v0,v1)

\draw [shorten >=-4cm, red, thick, dashed] (2rho) to ($(2rho)+(a1)$);
\draw [shorten >=-4cm, red, thick, dashed] (2rho) to ($(2rho)+(a2)$);
\end{tikzpicture} 
\subcaption{$\overline{\SO_4}(4)$ (Case-I.2.1)}
\label{SO4-I.2.1}
\medskip
\end{minipage}
\begin{minipage}[b]{.45 \textwidth}
 \centering

\begin{tikzpicture}
\clip (-0.5,-0.5) rectangle (4,3.5); 

\coordinate (pi1) at (1,0);
\coordinate (pi2) at (0,1);
\coordinate (v0) at ($3*(pi2)$);
\coordinate (v1) at ($3*(pi1)+3*(pi2)$);
\coordinate (v2) at ($3*(pi1)$);
\coordinate (a1) at (2,0);
\coordinate (a2) at (0,2);

\coordinate (Origin) at (0,0);
\coordinate (2rho) at (2,2);

\coordinate (barycenter) at (2.25, 2.25);

\foreach \x  in {-8,-7,...,9}{
  \draw[help lines,dotted]
    (\x,-8) -- (\x,9);
}

\foreach \x  in {-8,-7,...,9}{
  \draw[help lines,dotted]
    (-8,\x) -- (9,\x);
}
\begin{scope}[y=(45:1)]
\foreach \y  in {-12,-10,...,16}{
  \draw[help lines,thick,dotted]
   [rotate=45]  (-12,\y) -- (16,\y) ;
}
\foreach \y  in {-12,-10,...,12}{
  \draw[help lines,thick,dotted]
   [rotate=-45]  (-12,\y) -- (16,\y) ;
}
\end{scope}

\fill (Origin) circle (2pt) node[below left] {0};
\fill (pi1) circle (2pt) node[below right] {$\varpi_1$};
\fill (pi2) circle (2pt) node[below right] {$\varpi_2$};
\fill (a1) circle (2pt) node[below] {$\alpha_1$};
\fill (a2) circle (2pt) node[right] {$\alpha_2$};

\fill (2rho) circle (2pt) node[left] {$2\rho$};

\fill (v1) circle (2pt) node[right] {$A_1$};
\fill (v2) node[below] {$3\varpi_1$};

\fill (barycenter) circle (2pt) node[above left] {$\textbf{bar}_{DH}(\Delta_{5})$};

\draw[->,blue,thick](Origin)--(pi1);
\draw[->,blue,thick](Origin)--(pi2); 
\draw[->,blue,thick](Origin)--(a1);
\draw[->,blue,thick](Origin)--(a2);

\draw[thick](Origin)--(v0);
\draw[thick](v0)--(v1);
\draw[thick](v1)--(v2);
\draw[thick](Origin)--(v2);
\draw[thick](v1)--(v2);

\aeMarkRightAngle[size=6pt](Origin,v0,v1)
\aeMarkRightAngle[size=6pt](v1,v2,Origin)

\draw [shorten >=-4cm, red, thick, dashed] (2rho) to ($(2rho)+(a1)$);
\draw [shorten >=-4cm, red, thick, dashed] (2rho) to ($(2rho)+(a2)$);
\end{tikzpicture} 
\subcaption{$\overline{\SO_4}(5)$ (Case-I.2.2)}
\label{SO4-I.2.2}
\medskip
\end{minipage}

\begin{minipage}[b]{.45 \textwidth}
 \centering

\begin{tikzpicture}
\clip (-0.5,-0.5) rectangle (6.5,6.3); 

\coordinate (pi1) at (1,0);
\coordinate (pi2) at (0,1);
\coordinate (v0) at ($6*(pi2)$);
\coordinate (v2) at ($6*(pi1)$);
\coordinate (a1) at (2,0);
\coordinate (a2) at (0,2);

\coordinate (Origin) at (0,0);
\coordinate (2rho) at (2,2);

\coordinate (barycenter) at (2.57142857142857, 2.57142857142857);

\foreach \x  in {-8,-7,...,9}{
  \draw[help lines,dotted]
    (\x,-8) -- (\x,9);
}

\foreach \x  in {-8,-7,...,9}{
  \draw[help lines,dotted]
    (-8,\x) -- (9,\x);
}
\begin{scope}[y=(45:1)]
\foreach \y  in {-12,-10,...,16}{
  \draw[help lines,thick,dotted]
   [rotate=45]  (-12,\y) -- (16,\y) ;
}
\foreach \y  in {-12,-10,...,12}{
  \draw[help lines,thick,dotted]
   [rotate=-45]  (-12,\y) -- (16,\y) ;
}
\end{scope}

\fill (Origin) circle (2pt) node[below left] {0};
\fill (pi1) circle (2pt) node[below right] {$\varpi_1$};
\fill (pi2) circle (2pt) node[below right] {$\varpi_2$};
\fill (a1) circle (2pt) node[below] {$\alpha_1$};
\fill (a2) circle (2pt) node[right] {$\alpha_2$};

\fill (2rho) circle (2pt) node[left] {$2\rho$};

\fill (v0) circle (2pt) node[right] {$6\varpi_2$};
\fill (v2) circle (2pt) node[below] {$6\varpi_1$};

\fill (barycenter) circle (2pt) node[above left] {$\textbf{bar}_{DH}(\Delta_{6})$};

\draw[->,blue,thick](Origin)--(pi1);
\draw[->,blue,thick](Origin)--(pi2); 
\draw[->,blue,thick](Origin)--(a1);
\draw[->,blue,thick](Origin)--(a2);

\draw[thick](Origin)--(v0);
\draw[thick](v0)--(v2);
\draw[thick](Origin)--(v2);


\draw [shorten >=-4cm, red, thick, dashed] (2rho) to ($(2rho)+(a1)$);
\draw [shorten >=-4cm, red, thick, dashed] (2rho) to ($(2rho)+(a2)$);
\end{tikzpicture} 
\subcaption{$\overline{\SO_4}(6)$ (Case-II)}
\label{SO4-II}
\medskip
\end{minipage}

\caption{Moment polytopes of Gorenstein Fano compactifications of $\SO_4(\mathbb{C})$.}
\label{SO4-polytope}
\end{figure}
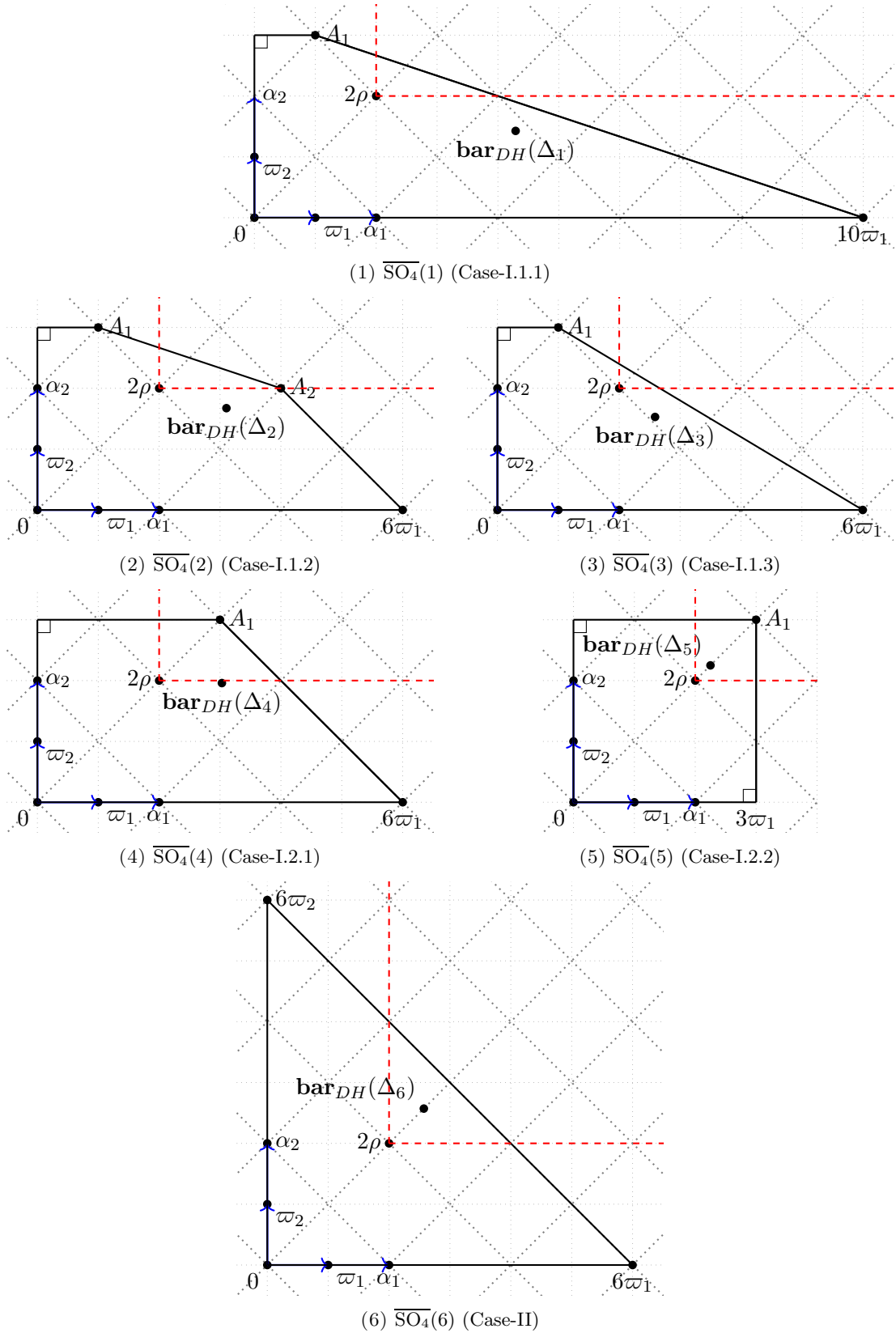


\clearpage


\providecommand{\bysame}{\leavevmode\hbox to3em{\hrulefill}\thinspace}
\providecommand{\MR}{\relax\ifhmode\unskip\space\fi MR }
\providecommand{\MRhref}[2]{%
  \href{http://www.ams.org/mathscinet-getitem?mr=#1}{#2}
}
\providecommand{\href}[2]{#2}

\end{document}